\newtheorem{theorem}{Theorem}[section]
\newtheorem{proposition}{Proposition}[section] 
\newtheorem{lemma}{Lemma}[section] 
\newtheorem{example}{Example}[section] 
\newtheorem{remark}{Remark}[section] 
\newtheorem{assumption}{Assumption}[section]
\newtheorem{definition}{Definition}[section]
\DeclareMathOperator*{\argmin}{argmin}
\DeclareMathOperator*{\argmax}{argmax}
\begin{document}
\title[Article Title]{Nonlinear Conjugate Gradient Methods for Optimization of Set-Valued Mappings of Finite Cardinality}

\author[1]{\fnm{Debdas} \sur{Ghosh}}\email{debdas.mat@iitbhu.ac.in}

\author[1]{\fnm{Ravi} \sur{Raushan}}\email{raviraushan.rs.mat21@itbhu.ac.in}

\author*[2]{\fnm{Zai-Yun} \sur{Peng}}\email{pengzaiyun@126.com}

\author[3,4]{\fnm{Jen-Chih} \sur{Yao}}\email{yaojc@math.nsysu.edu.tw}

\affil[1]{\orgdiv{Department of Mathematical Sciences}, \orgname{Indian Institute of Technology (BHU)}, \orgaddress{\city{Varanasi}, \postcode{221005}, \state{Uttar Pradesh}, \country{India}}}

\affil[1]{\orgdiv{College of Mathematics and Statistics}, \orgname{Chongqing JiaoTong University}, \orgaddress{\city{Chongqing}, \postcode{400074},  \country{P.R. China}}}

\affil[3]{\orgdiv{Center for General Education}, \orgname{China Medical University}, \orgaddress{\city{Taichung}, \country{Taiwan}}}

\affil[4]{\orgname{Academy of
Romanian Scientists}, \orgaddress{\city{Buchares}, \postcode{50044}, \country{Romania}}}

\abstract{This article presents nonlinear conjugate gradient methods for finding local weakly minimal points of set-valued optimization problems under a lower set less ordering relation. The set-valued objective function of the optimization problem under consideration is defined by finitely many continuously differentiable vector-valued functions. For such optimization problems, at first, we propose a general scheme for nonlinear conjugate gradient methods and then introduce Dai-Yuan, Polak-Ribi{\`e}re-Polyak, and Hestenes-Stiefel conjugate gradient parameters for set-valued functions. Toward deriving the general scheme, we introduce a condition of sufficient decrease and Wolfe line searches for set-valued functions. For a given sequence of descent directions of a set-valued function, it is found that if the proposed standard Wolfe line search technique is employed, then the generated sequence of iterates for set optimization follows a Zoutendijk-like condition. With the help of the derived Zoutendijk-like condition, we report that all the proposed nonlinear conjugate gradient schemes are globally convergent under usual assumptions. It is important to note that the ordering cone used in the entire study is not restricted to be finitely generated, and no regularity assumption on the solution set of the problem is required for any of the reported convergence analyses. Finally, we demonstrate the performance of the proposed methods through numerical experiments. In the numerical experiments, we demonstrate the effectiveness of the proposed methods not only on the commonly used test instances for set optimization but also on a few newly introduced problems under general ordering cones that are neither nonnegative hyper-octant nor finitely generated. \\

\noindent{\textbf{Keywords} Set optimization, Vector optimization, Conjugate gradient method, Wolfe conditions.}\\ 

\smallskip

 \noindent{\textbf{Mathematics Subject Classifications}
49J53, 90C29, 90C47.
 }}

\maketitle

\section{Introduction}

Set optimization is a class of mathematical problems concerned with minimizing set-valued mappings between two vector spaces, where the image space is partially ordered by a specified closed, convex, and pointed cone. Two primary approaches exist for defining solutions to these problems: the vector approach and the set approach. This paper centers on the latter.
In the set approach, a preorder is defined on the power set of the image space, and minimal solutions of the set-valued problem are determined accordingly. The foundational research in this field began with the works of Young \cite{young1931algebra}, Nishnianidze \cite{Nishnianidze1984}, and Kuroiwa \cite{kuroiwa1998natural,kuroiwa2001set}, who first introduced set relations to establish a preorder. Kuroiwa \cite{kuroiwa1997some} was the pioneer in applying the set approach to set optimization problems. Since then, this area of research has expanded significantly due to its applications in finance, optimization under uncertainty, game theory, and socioeconomics. For a comprehensive overview of the field, we refer to the monograph \cite{khan2016set}.  \\

The focus of this work is on the development of efficient numerical techniques for solving set optimization problems. The existing approaches to solve set optimization problems can be broadly categorized into the following five groups. \\ 
\begin{itemize}
    \item \emph{Derivative-Free Methods}  \cite{jahn2015derivative,jahn2018derivative,kobis2016treatment}:
    The methods in this group are descent methods that employ a derivative-free strategy. These algorithms are designed for unconstrained problems and do not assume any specific structure for the set-valued objective mapping. \\ 

\item \emph{Sorting Type Algorithms}  \cite{günther2019strictly,gunther2019computing,kobis2016generalized,kobis2018numerical}: This class of methods is designed to tackle set optimization problems with a finite feasible set. As a result, they rely on direct comparisons of the images produced by the set-valued objective mapping. \\

\item \emph{Algorithms Based on Scalarization}  \cite{ehrgott2014minmax,eichfelder2020algorithmic,ide2014concepts,ide2014relationship,jiang2019generalized,schmidt2019min}: This category of methods adopt a scalarization approach and are intended for problems where the set-valued objective mapping has a structure derived from the robust counterpart of a vector optimization problem under uncertainty. Subsequently, various scalarization techniques are utilized to solve the set optimization problem. \\

\item \emph{Branch and Bound Approach} \cite{eichfelder2020algorithmic}: In the study by Eichfelder et al. \cite{eichfelder2020algorithmic}, an algorithm is introduced to tackle uncertain vector optimization problems, with the assumption that uncertainty arises solely from the decision variable. They proposed a branch-and-bound technique designed to identify a box covering for the solution set.  \\

\item \emph{First-Order Solution Methods}: Bouza et al. \cite{bouza2021steepest} and Kumar et al. \cite{kumar2024nonlinear} reported algorithms to tackle unconstrained set optimization problems where the set-valued objective mapping is defined by a finite number of continuously differentiable functions. While derivative-free methods can be used to solve such problems, they face similar limitations to those encountered in scalar optimization. Notably, derivative-free methods are expected to perform slower than first-order methods as they do not use the available first-order information of the objective function.  \\ 
\end{itemize}

In this paper, in the category of first-order methods, we introduce a general conjugate gradient algorithm for the set optimization problems taken in \cite{bouza2021steepest,kumar2024nonlinear}. The motivations for this work are  as follows: \\

\begin{itemize}
    \item The considered type of set optimization problems have significant applications in optimization under uncertainty. Indeed, set optimization problems of this nature emerge when determining robust solutions for vector optimization problems under uncertainty, especially when the uncertainty set is finite (see \cite{ide2014relationship}). Moreover, solving problems with a finite uncertainty set is essential for tackling the more general case with an infinite uncertainty set. This is demonstrated by the cutting plane strategy in \cite{mutapcic2009cutting} and the reduction techniques in \cite[Proposition 2.1]{ben1998robust} and \cite[Theorem 5.9]{ehrgott2014minmax}. \\

    \item In \cite{bouza2021steepest}, a regularity condition on the solution points is used to prove the convergence of the proposed algorithm. A regularity condition on the solution set, which is not apriori known, is restrictive. Thus, in this paper, we aim to devise a first-order technique in this study whose convergence does not depend on any regularity condition on the solution set of the problem. \\

    \item In \cite{lucambio2018nonlinear}, Prudente et al. extended the conventional nonlinear conjugate gradient methods for vector optimization problems. Because of \cite[Proposition 3.2]{lucambio2018nonlinear}, the methods in \cite{lucambio2018nonlinear}  are applicable only if the ordering cone is finitely generated. Although considerable work has been done on the direction of conjugate gradient methods for vector optimization  \cite{lucambio2018nonlinear,gonccalves2020extension,gonccalves2022study,he2023spectral,yahaya2024efficient}, to the best of our knowledge none of the existing methods is applicable for nonfinitely generated ordering cones. Thus, there is a need to derive a general nonlinear conjugate gradient method for the set optimization problems under consideration whose convergence is not dependent on the existence of a finite generator of the ordering cone. \\     
\end{itemize}

With these motivations, we propose a general nonlinear conjugate gradient method for set optimization problems without assuming that the ordering cone is finitely generated. We also establish the convergence of the proposed method without taking any regularity assumption on the solution set of the problem under consideration. \\

The outline of the rest of this paper is as follows. In Section \ref{sec:2}, we introduce notations, concepts, and results that are used throughout the paper. Section \ref{sec_Ana_wofe} analyzes the standard and the strong Wolfe line searches for set-valued functions and identifies a Zoutendijk-like condition. In Section \ref{section4}, we propose a general scheme for nonlinear conjugate gradient methods and analyze its global convergence. Three particular conjugate gradient parameters are provided in Section \ref{three_special_methods}. We illustrate the numerical performance of the proposed methods on various problems in Section \ref{Numerical Experiments}; a comparison of the proposed methods with the existing conjugate gradient methods for set optimization is also provided. Finally, in Section \ref{conclusion}, we conclude the entire study and propose ideas for further research.

\section{Preliminaries and terminologies}\label{sec:2}
In this section, we provide basic terminologies, notations, and results that are used throughout the paper. The positive orthant and nonnegative orthant of $\mathbb{R}^m$ are denoted by $\mathbb{R}^{m++}$ and $\mathbb{R}^{m+}$, respectively. For a given $p\in \mathbb{N}$, we use the notation $[p]$ to represent the set $\{1,2,\ldots,p\}$. The notation $\mathscr{P}(\mathbb{R}^{m})$ represents the class of all nonempty subsets of $\mathbb{R}^{m}$. For a set $A\in \mathscr{P}(\mathbb{R}^{m})$, $\mathrm{conv}(A)$ and $\mathrm{int}(A)$ denote its convex hull and interior, respectively. \\ 

Throughout the paper, let $K\in \mathscr{P}(\mathbb{R}^{m})$ be a cone that is closed, convex, solid ($\mathrm{int}(K)\neq 
 \emptyset$), and pointed ($K\cap (-K)=\{0\}$). The partial order in $\mathbb{R}^{m}$ induced by $K$ is denoted by $\preceq_{K}$, and defined as 
\[y\preceq_{K} z \Longleftrightarrow z-y\in K,\]\\
and the strict order relation $\prec_{K}$ in $\mathbb{R}^{m}$ induced by $\mathrm{int} (K)$ is defined as
$$y\prec_{K} z \Longleftrightarrow z-y\in \mathrm{int}(K).$$
The positive polar cone of $K$ is the set $$K^{*}:=\{w\in \mathbb{R}^{m} \mid  w^{\top}y\geq 0 \text{ for all } y\in K\}.$$ Since $K$ is closed and convex,  we have $K=K^{**}$, $$-K=\{y\in \mathbb{R}^{m} \mid y^{\top}w\leq 0 \text{ for all } w\in K^{*}\},$$ and  $-\mathrm{int}(K)=\{y\in \mathbb{R}^{m} \mid y^{\top}w< 0 \text{ for all } w\in K^{*} \setminus \{0\}\}.$ \\

Let $C\subset K^{*} \setminus \{0\}$ be a compact set such that
\begin{equation}\label{con. for generator of K*}
    K^{*}=\mathrm{cone}(\mathrm{conv}(C)).
\end{equation}
%
%
%
Then, we say that $C$ is a generator of the cone $K^{*}$. \\

Note that if the set $K$ is a polyhedral cone, $K^{*}$ is also a polyhedral cone, and $C$ can be taken as the finite set of extremal rays of $K^{*}$. \\ 

If $K=\mathbb{R}^{m+}$, then $K^{*}=\mathbb{R}^{m+}=K$. Therefore, for $K=\mathbb{R}^{m+}$, in \eqref{con. for generator of K*}, we can take $C$ as the canonical basis of $\mathbb{R}^{m}$. \\

For the rest of the analysis presented in this paper, we consider a generator $C$ of $K^*$ as discussed in the following lemma. \emph{Importantly, note that throughout the paper, we do not restrict $K$ to be finitely generated.} \\

\begin{lemma}
    Let $e\in \mathrm{int}(K)$ be given. Then,
    \begin{equation}\label{gen_polar_cone}
    C:=\left\{w\in K^{*} ~\middle|~ w^{\top}e=1\right\}
\end{equation}
is compact and satisfies the relation \eqref{con. for generator of K*}.
\end{lemma}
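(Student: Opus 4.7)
The plan is to verify the two assertions separately: first that $C$ is compact, and second that $\mathrm{cone}(\mathrm{conv}(C)) = K^*$. The main tool in both cases will be the observation that $w^\top e > 0$ for every nonzero $w \in K^*$, which follows from $e \in \mathrm{int}(K)$. Indeed, if some $w \in K^* \setminus \{0\}$ satisfied $w^\top e \leq 0$, then since $e$ is interior to $K$ there would exist $\varepsilon > 0$ such that $e - \varepsilon w \in K$, yielding $w^\top(e - \varepsilon w) = w^\top e - \varepsilon\|w\|^2 < 0$, contradicting $w \in K^*$. In fact a quantitative version of the same argument gives a radius $r > 0$ with $B(e,r) \subset K$, so that $w^\top(e - r v) \geq 0$ for every unit vector $v$, hence $w^\top e \geq r\|w\|$ for every $w \in K^*$.

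For compactness, I would note that $C = K^* \cap H$ with $H := \{w : w^\top e = 1\}$ is closed, being the intersection of a closed cone with a closed hyperplane. Boundedness then follows from the inequality $r\|w\| \leq w^\top e = 1$ above, which forces $\|w\| \leq 1/r$ for every $w \in C$. Together with closedness in $\mathbb{R}^m$, this gives compactness.

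For the generation property, the inclusion $\mathrm{cone}(\mathrm{conv}(C)) \subseteq K^*$ is immediate because $K^*$ is a convex cone that contains $C$. For the reverse inclusion, take any $w \in K^*$. If $w = 0$, it is trivially in the right-hand side. Otherwise, by the preliminary observation, $\alpha := w^\top e > 0$, and the normalized vector $\tilde w := w/\alpha$ lies in $C$ because $\tilde w \in K^*$ and $\tilde w^\top e = 1$. Then $w = \alpha \tilde w \in \mathrm{cone}(C) \subseteq \mathrm{cone}(\mathrm{conv}(C))$, completing the proof.

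I do not expect any substantial obstacle in this proof; the only delicate point is the strict positivity $w^\top e > 0$ for nonzero $w \in K^*$, which requires the interiority hypothesis on $e$ and is precisely what rules out pathological behavior at the origin. Everything else is a routine combination of the definitions of positive polar cone, convex hull, and conic hull.
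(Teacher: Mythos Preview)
Your proposal is correct and follows essentially the same route as the paper: boundedness of $C$ via the ball $B(e,r)\subset K$ giving $w^\top e \geq r\|w\|$, closedness as an intersection of closed sets, and the generation property by normalizing each nonzero $w\in K^*$ as $w = (w^\top e)\cdot w/(w^\top e)$. Your boundedness argument is in fact slightly cleaner, since you fix a uniform radius $r$ from the start rather than letting the perturbation depend on the direction.
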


\begin{proof}
Note that $e \in \mathrm{int}(K)$. Therefore, for any given $\bar x \in \mathbb{R}^m$ with $\|\bar x\| = 1$, there exists a $\gamma>0$ such that $e-\gamma \bar x \in K$. Further, for any $\tilde w \in C$, we have
\begin{align*}
   {\tilde w}^{\top}(e - \gamma \bar x) \geq 0 \implies \tfrac{1}{\gamma} \geq  {\tilde w}^{\top} \bar x  \implies \tfrac{1}{\gamma} \geq \underset{\|x\| = 1}{\max} {\tilde w}^{\top} x  \implies \tfrac{1}{\gamma} \geq \|\tilde w\|.  
\end{align*}
Thus, the set $C$ is bounded. Moreover, $w \mapsto w^{\top}e$ is a continuous mapping from $\mathbb{R}^{m}$ to $\mathbb{R}$, and the set $\{1\} $ is closed in $\mathbb{R}$. Therefore, the set $C$ is compact. \\

\noindent 
Next, note that $C \subseteq K^{*}$ and $K^{*}$ is a convex cone. So, 
$\mathrm{cone}(\mathrm{conv}(C)) \subseteq K^*$. \\

To prove $K^* \subseteq \mathrm{cone}(\mathrm{conv}(C))$, let $\bar{w}$ be a non-zero element of $K^{*}$. We show that there exists a $\bar{t}>0$ and $\bar{w}_{c}\in C$ such that $\bar{w} = \bar{t}~\bar{w}_{c}$. Then, it will imply that $\bar{w}$ belongs to $\mathrm{cone}(\mathrm{conv}(C))$ and the result will be followed. \\

\noindent
    Denote $\bar{t}:=\bar{w}^{\top}e$. Then, we note from the definition of $K^*$ that $\bar{t}>0$ since $\bar{w} \in K^{*}-\{0\} $ and $e \in \mathrm{int}(K)$. \\ 

\noindent
    Denoting $\bar{w}_{c}:= \tfrac{\bar{w}}{\bar{t}}$, we see that $\bar{w}_{c} \in K^{*}$ as $\bar{t}>0$, and $\bar{w}_{c} \in C$ because 
    $\tfrac{{\bar{w}}^{\top}e}{\bar{t}}=1$. \\ 

\noindent
    Thus, for any non-zero $\bar{w} \in K^{*}$, we have $\bar{t} = \bar{w}^\top e > 0$ and $\bar{w}_{c} = \tfrac{\bar{w}}{\bar{t}} \in C$ such that $\bar{w} = \bar{t}~\bar{w}_{c}$. Hence, the result follows. 
\end{proof}
%
%
%
%

Next, we define minimal elements of the sets in $\mathbb{R}^{m}$ with respect to the cone $K$.  \\

\begin{definition}\label{Def_minimal}
The set of minimal elements of a nonempty set $A \in \mathscr{P}(\mathbb{R}^{m})$ with respect to the cone $K$ is defined as
$$\mathrm{Min}(A,K) := \left\{ y \in A ~\middle|~ (y - K) \cap A = \{y\} \right\}.$$ 
Similarly, the set of weakly minimal elements of $A$ with respect to $K$ is defined as
$$\mathrm{WMin}(A,K) := \left\{ y \in A ~\middle|~ (y - \mathrm{int}(K)) \cap A = \emptyset \right\}.$$

\end{definition}

\begin{proposition}\label{min+A}
\emph{\cite{eichfelder2011vector}}
    Let $A \in \mathscr{P}(\mathbb{R}^{m})$ be compact. Then, $A$ satisfies the domination property with respect to $K$: 
\[ A + K = \mathrm{Min}(A, K) + K. \]
\end{proposition}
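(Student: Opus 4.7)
The plan is to prove the nontrivial inclusion $A + K \subseteq \mathrm{Min}(A,K) + K$ by showing that every point of $A$ already lies in $\mathrm{Min}(A,K) + K$; the extra $+K$ on the left then costs nothing because $K+K\subseteq K$. The reverse inclusion $\mathrm{Min}(A,K)+K \subseteq A + K$ is immediate from $\mathrm{Min}(A,K)\subseteq A$, so the whole problem reduces to the claim $A \subseteq \mathrm{Min}(A,K) + K$.

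First I would fix $a\in A$ and restrict attention to the ``section below $a$'', namely $A_a := A \cap (a - K)$. This set contains $a$, is closed as the intersection of the closed set $A$ (compactness of $A$ gives closedness) with the closed translate $a-K$, and is bounded as a subset of $A$; hence $A_a$ is compact and nonempty. Any $K$-minimal point of $A_a$ will automatically be $K$-minimal in all of $A$, because anything $K$-below an element of $A_a$ lies in $a-K$ by transitivity and therefore in $A_a$.

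Next I would produce such a minimal point by a strictly monotone scalarization. Because $K$ is closed, convex, pointed and solid in $\mathbb{R}^m$, pointedness of $K$ forces $\mathrm{int}(K^{*})\neq\emptyset$; pick any $w\in\mathrm{int}(K^{*})$. A short argument (perturbing $w$ inside $K^*$) shows $w^{\top}u>0$ for every $u\in K\setminus\{0\}$. By continuity of $x\mapsto w^{\top}x$ and compactness of $A_a$, there exists $y^{\star}\in A_a$ minimizing $w^{\top}x$ over $A_a$. If $z\in A$ satisfies $y^{\star}-z\in K$, then $a-z=(a-y^{\star})+(y^{\star}-z)\in K$, so $z\in A_a$; if moreover $z\neq y^{\star}$, the strict positivity of $w$ on $K\setminus\{0\}$ gives $w^{\top}z<w^{\top}y^{\star}$, contradicting optimality. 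Hence $y^{\star}\in \mathrm{Min}(A,K)$, and since $a-y^{\star}\in K$, we obtain $a = y^{\star} + (a - y^{\star}) \in \mathrm{Min}(A,K) + K$, finishing the proof.

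The only nontrivial conceptual step is the choice of the scalarizing functional $w$: we need a continuous linear functional that is \emph{strictly} $K$-monotone, not just nonnegative on $K$. This is exactly where pointedness of $K$ is used, through the fact that pointedness of a closed convex cone in finite dimensions is equivalent to solidness of its positive polar. Everything else (compactness of $A_a$, attainment of a minimum, passing from a minimum of $A_a$ to a minimum of $A$) is routine.
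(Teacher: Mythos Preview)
Your argument is correct. The paper does not actually prove this proposition; it is quoted from \cite{eichfelder2011vector} and stated without proof, so there is nothing to compare against line by line. What you have written is the standard proof of the domination property for compact sets under a closed convex pointed cone: reduce to showing $A\subseteq \mathrm{Min}(A,K)+K$, localize to the section $A_a=A\cap(a-K)$, and extract a minimal element by minimizing a strictly $K$-monotone linear functional $w\in\mathrm{int}(K^*)$ over the compact set $A_a$. Your use of pointedness (to guarantee $\mathrm{int}(K^*)\neq\emptyset$ and hence the existence of a strictly monotone $w$) is exactly the right ingredient, and the transitivity step showing that a minimal point of $A_a$ is already minimal in $A$ is handled cleanly. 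One small remark: you list solidness of $K$ among the hypotheses you invoke, but your proof never uses it---only closedness, convexity, and pointedness are needed for the domination property, and indeed the result holds without $\mathrm{int}(K)\neq\emptyset$.
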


\smallskip 
\smallskip

Next, we discuss a preordering relation on \(\mathscr{P}(\mathbb{R}^{m})\), which is employed in this article to formulate the concept of optimality for the set optimization problems under consideration. For further insights into set ordering relations, we refer the reader to \cite{jahn2011new,karaman2018partial}. \\

\begin{definition}\emph{\cite{bouza2021steepest}}
    Let $A$ and $B$ belong to $ \mathscr{P}(\mathbb{R}^{m})$. For the given cone $K$, the lower set less relation $\preceq^{\ell}$ is defined on $\mathscr{P}(\mathbb{R}^{m})$ as follows:
    $$ A \preceq^{\ell} B \Longleftrightarrow B\subseteq A+K.$$
\noindent Similarly, for the solid cone $K$, the strict lower set less relation $\prec^{\ell}$ is defined by
 $$A \prec^{\ell} B \Longleftrightarrow B\subseteq A+\mathrm{int}(K).$$
\end{definition}

\smallskip
\smallskip

Now, we present the set optimization problem under consideration, accompanied by a solution concept based on the lower set less preordering relation $\preceq^{\ell}$. \\

Consider a set-valued mapping $F: \mathbb{R}^n \rightrightarrows \mathbb{R}^m$, where $F$ takes only nonempty values. A set optimization problem, under the preordering $\preceq^{\ell}$, with $F$ as the objective function is presented as follows: 
\begin{align}\label{SPL}
\tag{$\mathcal{SOP}_\ell$}
    \preceq^{\ell}\text{--- }\underset{x\in \mathbb{R}^{n}}{\min}F(x).
\end{align}

A solution concept for \eqref{SPL} is defined in the following manner. A point $\bar{x}\in \mathbb{R}^{n}$ is called a \emph{local weakly minimal solution} of \eqref{SPL} if there exists a neighbourhood $ U $ of $\bar{x}$ such that 
\begin{equation}\label{P_min_sol}
    \nexists~ x\in U: F(x)\prec^{\ell}F(\bar{x}).
\end{equation}
If \eqref{P_min_sol} holds with $U=\mathbb{R}^{n}$, then $\bar{x}$ is called a weakly minimal solution of \eqref{SPL}. \\  

Throughout the rest of the paper, we undertake the following assumption on the set optimization problem \eqref{SPL}.\\

\begin{framed}

\begin{assumption}\label{Ass on SPL}
    Let $e \in \mathrm{int}(K)$ be a given element and $f^1, f^2, \ldots, f^p : \mathbb{R}^n \rightarrow \mathbb{R}^m$ be continuously differentiable functions. We assume that the set-valued mapping $F$ in \eqref{SPL} is defined by
    \begin{equation*}
        F(x):=\left\{f^1(x), f^2(x), \ldots, f^p(x)\right\}.
    \end{equation*}
\end{assumption}    
\end{framed}

Below, we note down a few inequalities to simplify some calculations in the later part of the paper. \\

\begin{lemma}\label{S_calcul.}
\emph{\cite{lucambio2018nonlinear}}
For any real numbers $z_{1},z_{2},$ and $t \neq 0$, we have
    \begin{enumerate}[label=(\roman*)]
        \item $z_{1}z_{2} \leq \tfrac{z^2_{1}}{2}+\tfrac{z^2_{2}}{2}$,

        \item $2z_{1}z_{2} \leq 2 t ^2 z^{2}_{1}+\tfrac{z^2_{2}}{2t^2}$,

        \item $(z_{1}+z_{2})^2 \leq 2z^2_{1} +2 z^2_{2}$, \text{and }

        \item $(z_{1}+z_{2})^2 \leq (1+2t^2)z^2_{1} + \left(1+\tfrac{1}{2t^2}\right) z^2_{2}$.
    \end{enumerate}
\end{lemma}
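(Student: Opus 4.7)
The plan is to derive all four bounds from the single observation that $(a-b)^2 \geq 0$ for any real numbers $a, b$, suitably specialized, and then to obtain parts (iii) and (iv) as immediate consequences of (i) and (ii) by expanding the square $(z_1+z_2)^2$. In other words, the entire lemma is a repackaging of Young's inequality, and no sophisticated machinery is needed.

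For (i), I would take $a = z_1$ and $b = z_2$. Expanding $(z_1 - z_2)^2 \geq 0$ yields $z_1^2 + z_2^2 \geq 2 z_1 z_2$, which rearranges directly to the claimed bound $z_1 z_2 \leq z_1^2/2 + z_2^2/2$. For (ii), the same trick works with $a = \sqrt{2}\, t\, z_1$ and $b = z_2 / (\sqrt{2}\, t)$, a choice that is legitimate because $t \neq 0$. Expanding $(a-b)^2 \geq 0$, the cross term collapses to $-2 z_1 z_2$, and the squared terms become $2 t^2 z_1^2$ and $z_2^2/(2 t^2)$, giving precisely (ii).

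For (iii), my plan is to expand $(z_1 + z_2)^2 = z_1^2 + 2 z_1 z_2 + z_2^2$ and replace the cross term $2 z_1 z_2$ by the upper bound $z_1^2 + z_2^2$ supplied by doubling (i); this produces $(z_1+z_2)^2 \leq 2 z_1^2 + 2 z_2^2$. For (iv), the same expansion is used, but the cross term $2 z_1 z_2$ is bounded by $2 t^2 z_1^2 + z_2^2/(2 t^2)$ from (ii); collecting coefficients of $z_1^2$ and $z_2^2$ yields $(1+2t^2) z_1^2 + (1 + 1/(2t^2)) z_2^2$, as claimed. There is no main obstacle in this proof; the only point requiring care is the standing hypothesis $t \neq 0$, which is needed to make $1/(2t^2)$ well-defined in (ii) and (iv).
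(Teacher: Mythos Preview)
Your proof is correct and complete; each step follows from the elementary inequality $(a-b)^2\geq 0$ with the substitutions you describe, and the deductions of (iii) and (iv) from (i) and (ii) are sound. The paper itself does not supply a proof of this lemma but merely cites \cite{lucambio2018nonlinear}, so there is no alternative approach to compare against; your argument is the standard one.
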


\subsection{Optimality conditions}
In this subsection, we discuss a necessary condition for local weakly minimal solutions of the problem \eqref{SPL} under Assumption \ref{Ass on SPL}, which is identified from \cite{bouza2021steepest} with the help of the Gerstewitz scalarizing function. The mentioned necessary optimality condition (later in Remark \ref{ex_a_u} (ii)) is used as a stopping condition of the proposed algorithm in this paper.  To recall the necessary optimality condition, we require a few index-related set-valued mappings, which are defined after a short description of the Gerstewitz scalarizing function.  \\

\begin{definition}
    For a given element $e\in \mathrm{int}(K)$, the Gerstewitz function $\psi_{e} : \mathbb{R}^{m}\rightarrow \mathbb{R}$ associated with the element $e$ and the cone $K$ is defined by 
    \begin{equation}\label{Grit}
        \psi_{e}(y):=\min\left\{t\in \mathbb{R} ~\middle|~  te \in y+K\right\}.
    \end{equation}
\end{definition}

\begin{lemma}\label{pro_Ger.}
\emph{\cite{khan2016set}} 
Let $y\text{ and } z$ belong to $\mathbb{R}^{m}$,  and $\alpha $ be a nonnegative real number. Then,
     \begin{enumerate}[label=(\roman*)]
     \item $\psi_{e}(y+z) \leq \psi_{e}(y)+\psi_{e}(z)$ and $\psi_{e}(\alpha y) = \alpha ~ \psi_{e}(y)$.

     \item If $y\preceq_{K}z$, then $\psi_{e}(y)\leq \psi_{e}(z)$, and if $y\prec_{K}z$, then $\psi_{e}(y)< \psi_{e}(z)$.

     \item $\psi_{e}$ is Lipschitz continuous on $\mathbb{R}^{m}$.

     \item $\psi_{e}$ satisfies the following representability properties:
     $$-K=\left\{y\in \mathbb{R}^{m} ~\middle|~ \psi_{e}(y)\leq 0\right\}\text{ and }-\mathrm{int}(K)=\left\{y\in \mathbb{R}^{m} ~\middle|~ \psi_{e}(y)< 0\right\}.$$
\end{enumerate}    
\end{lemma}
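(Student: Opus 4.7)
The plan is to first verify that the minimum in \eqref{Grit} is actually attained, and then derive the four properties in an order that lets each one reuse the preceding. For well-definedness I would show that for every fixed $y$ the set $S_y := \{t\in\mathbb{R} \mid te\in y+K\}$ is nonempty (large $t$ works because $e\in\mathrm{int}(K)$ eventually dominates $y/t$), closed (inherited from $K$ being closed), and bounded below; if it were not bounded below, rescaling a sequence $t_n\to-\infty$ with $t_n e - y\in K$ by $1/|t_n|$ and passing to the limit would give $-e\in K$, which contradicts pointedness together with $e\in\mathrm{int}(K)$. I would also record from $e\in\mathrm{int}(K)$ a radius $r>0$ with $B(e,r)\subseteq K$ for later calibration in part (iii).

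For (i), positive homogeneity with $\alpha>0$ is the change of variable $s=t/\alpha$ in $S_{\alpha y}$ together with $\alpha K = K$, and the case $\alpha=0$ reduces to $\psi_e(0)=0$ (clearly $0\in S_0$, and $\psi_e(0)<0$ would give $-\lambda e\in K$ for some $\lambda>0$, again contradicting pointedness). Subadditivity is immediate: with $t_1=\psi_e(y)$ and $t_2=\psi_e(z)$, one has $(t_1+t_2)e-(y+z) = (t_1e-y)+(t_2e-z)\in K+K\subseteq K$, so $t_1+t_2\in S_{y+z}$. For (ii), taking $t=\psi_e(z)$ and $z-y\in K$, the decomposition $te-y=(te-z)+(z-y)\in K+K\subseteq K$ places $t$ in $S_y$, hence $\psi_e(y)\le t$. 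When $z-y\in\mathrm{int}(K)$, the same sum lies in $K+\mathrm{int}(K)\subseteq\mathrm{int}(K)$, and the openness of $\mathrm{int}(K)$ along the $-e$ direction produces $\delta>0$ with $t-\delta\in S_y$, giving strict inequality.

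For (iii), subadditivity already yields $\psi_e(y)-\psi_e(z)\le\psi_e(y-z)$ and (by symmetry) $\psi_e(z)-\psi_e(y)\le\psi_e(z-y)$, so it suffices to bound $\psi_e(v)$ linearly in $\|v\|$. Using $B(e,r)\subseteq K$, one sees that $e - ru\in K$ for any unit vector $u$, which reads as $\psi_e(ru)\le 1$, and positive homogeneity then upgrades this to $\psi_e(v)\le\|v\|/r$ for every $v$. Combining with the two one-sided estimates gives $|\psi_e(y)-\psi_e(z)|\le\|y-z\|/r$, the desired Lipschitz bound.

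For (iv), $\psi_e(y)\le 0$ is equivalent to the existence of $t\le 0$ with $te-y\in K$; writing $-y=(-t)e+(te-y)$ and noting that $(-t)e\in K$ when $-t\ge 0$ forces $-y\in K$, while conversely $-y\in K$ puts $0\in S_y$. The strict version is the same argument with $t<0$ replacing $(-t)e\in K$ by $(-t)e\in\mathrm{int}(K)$ and using $\mathrm{int}(K)+K\subseteq\mathrm{int}(K)$; the converse uses openness of $\mathrm{int}(K)$ around $-y$ in the $-e$ direction to manufacture a strictly negative element of $S_y$. The only delicate point I foresee is the well-definedness step, where pointedness must be invoked carefully to rule out $\psi_e(y)=-\infty$; once this is in place everything else is routine bookkeeping from the definition, subadditivity, positive homogeneity, and the cone arithmetic $K+K\subseteq K$, $K+\mathrm{int}(K)\subseteq\mathrm{int}(K)$.
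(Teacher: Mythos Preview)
Your proof is correct and complete. Note, however, that the paper does not supply its own proof of this lemma: it is quoted from \cite{khan2016set} and stated without argument, so there is nothing in the paper to compare your approach against. What you have written is the standard direct verification of the Gerstewitz functional's properties from the cone axioms (closedness, convexity, pointedness, solidness of $K$), and each step is sound---in particular your handling of well-definedness via pointedness, the Lipschitz constant $1/r$ extracted from $B(e,r)\subseteq K$, and the cone arithmetic $K+\mathrm{int}(K)\subseteq\mathrm{int}(K)$ for the strict cases are all correctly argued. If anything, your write-up adds value by making the paper self-contained on this point.
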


We now define a few index-related set-valued mapping. \\

\begin{definition}\emph{\cite{bouza2021steepest}} 
\begin{enumerate}[label=(\roman*)]
\item The active-index function of minimal elements $ I : \mathbb{R}^n \rightrightarrows [p]$ associated with $F$ is     
   \[ I(x) := \left\{ i \in [p] ~\middle|~ f^i(x) \in \mathrm{Min}(F(x), K)\right \}.\]
  In the same way, for weakly minimal elements, the active-index function $ I_{W}: \mathbb{R}^n \rightrightarrows [p]$ associated with $F$ is defined by
\begin{equation*}
     I_{W}(x) := \left\{ i \in [p] ~\middle|~ f^i(x) \in \mathrm{WMin}(F(x), K)\right \}.
\end{equation*}

\item For a vector $v \in \mathbb{R}^m$, we define $I_v : \mathbb{R}^n \rightrightarrows [p]$  as
\begin{equation*}
   I_v(x) := \left\{ i \in I(x) ~\middle|~ f^i(x) = v \right\}. 
\end{equation*}
 
\end{enumerate}
\end{definition}

Note that $I_v(x) = \emptyset$ whenever $v \notin \text{Min}(F(x), K)$, and for any $x\in \mathbb{R}^{n}$, \\
\begin{enumerate}[label=(\roman*)]
     \item  
     $I(x)=\underset{v \in \text{Min}(F(x),K)}{\bigcup} I_{v}(x)$, and 
     \item for $u\neq v$, $I_u(x)\cap I_v(x)=\emptyset.$ \\
\end{enumerate}

\begin{definition}\emph{\cite{bouza2021steepest}}
    The cardinality function $\omega: \mathbb{R}^n \rightarrow \mathbb{R}$  of the set of minimal elements of $F$ is $$\omega(x) := |\mathrm{Min}(F(x), K)|. $$ Additionally, we denote $\bar{\omega} = \omega(\bar{x})$. \\ 
\end{definition}


\smallskip

\begin{definition}\emph{\cite{bouza2021steepest}}
    Let $x\in\mathbb{R}^n$, and $\{v_1^x,v_2^x,\ldots,v^x_{\omega(x)}\}$ be an enumeration of the set $\mathrm{Min}(F(x),K)$. The partition set at $x$ is defined as
   \[ P_x:=\prod_{j=1}^{\omega(x)} I_{v_j^x}(x). \]
\end{definition}

Utilizing the concept of partition set, we now recall a result that helps us to verify whether a given point \(\bar{x} \in \mathbb{R}^{n}\) is a weakly minimal solution of the considered set optimization problem with the help of a class of \emph{special} multiobjective optimization problems. \\

\begin{lemma} \emph{\cite{bouza2021steepest}}\label{Vectorization} 
    Let $\bar x$ be a given point in $\mathbb{R}^n$. Suppose that 
    \begin{enumerate}[label=(\roman*)]
        \item $\tilde {K}\in \mathscr{P}(\mathbb{R}^{m\bar{\omega}})$ is the cone  
        $ \tilde {K} : = K \times K \times \cdots \times K$ ($\bar{\omega}$ times).

        \item $P_{\bar{x}}$ is the partition set at $\bar{x}$.

        \item For any $a:=(a_{1},a_{2}, \ldots, a_{\bar{\omega}})\in P_{\bar{x}}$, the function $\tilde{f}^{a}:\mathbb{R}^{n}\rightarrow \prod_{j=1}^{\bar{\omega}}\mathbb{R}^{m}$ is defined as 
\begin{align*}
\tilde{f}^{a}(x):=\left(f^{a_1}(x), f^{a_2}(x), \ldots, f^{a_{\bar{\omega}}}(x)\right)^\top.
\end{align*}
\end{enumerate}
Then, $\bar{x}$ is a local weakly minimal solution of \eqref{SPL} if and only if for every $a\in   P_{\bar{x}}$, $\bar{x}$ is a local weakly minimal solution of the multiobjective optimization problem
    \begin{align}\label{MOP_x_k}
    \tag{$\mathcal{MOP}_{\tilde{K}}$}
    \preceq^{\tilde{K}}\text{---}\underset{x\in \mathbb{R}^{n}}{\min}\tilde{f}^{a}(x).
\end{align}
\end{lemma}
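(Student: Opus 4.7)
The plan is to prove both directions by contrapositive, using two main ingredients: the domination property from Proposition \ref{min+A} applied to the (finite, hence compact) set $F(\bar{x})$, and the fact that $\mathrm{int}(K) + K \subseteq \mathrm{int}(K)$.

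For the direction ``$\bar{x}$ local weakly minimal of \eqref{SPL} $\Rightarrow$ $\bar{x}$ local weakly minimal of \eqref{MOP_x_k} for every $a\in P_{\bar{x}}$,'' I would assume the failure of the conclusion: there is an $a\in P_{\bar{x}}$ and a sequence $x_k\to\bar{x}$ with $\tilde f^{a}(x_k)\prec_{\tilde K}\tilde f^{a}(\bar{x})$, i.e.\ $f^{a_j}(x_k)\prec_K v_j^{\bar{x}}$ for every $j\in[\bar{\omega}]$. I then want to verify $F(\bar{x})\subseteq F(x_k)+\mathrm{int}(K)$. Pick any $f^i(\bar{x})\in F(\bar{x})$. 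By Proposition \ref{min+A}, $f^i(\bar{x})=v_j^{\bar{x}}+k$ for some $j$ and some $k\in K$. Combining with $v_j^{\bar{x}}-f^{a_j}(x_k)\in\mathrm{int}(K)$ gives $f^i(\bar{x})-f^{a_j}(x_k)\in\mathrm{int}(K)+K\subseteq\mathrm{int}(K)$, which produces $F(x_k)\prec^{\ell}F(\bar{x})$ and contradicts local weak minimality of $\bar{x}$ for \eqref{SPL}.

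For the converse direction, I would again argue by contrapositive: suppose $\bar{x}$ is not a local weakly minimal solution of \eqref{SPL}, so there is a sequence $x_k\to\bar{x}$ with $F(\bar{x})\subseteq F(x_k)+\mathrm{int}(K)$. For each $j\in[\bar{\omega}]$ and each $k$, there exists $i_j^k\in[p]$ with $f^{i_j^k}(x_k)\prec_K v_j^{\bar{x}}$. Since $[p]$ is finite, a diagonal extraction yields a subsequence (not relabeled) along which $i_j^k$ is constant, say equal to $a_j$, for every $j$. This is the key step where care is required. Using continuity of $f^{a_j}$ and the closedness of $K$, passing to the limit in $v_j^{\bar{x}}-f^{a_j}(x_k)\in\mathrm{int}(K)\subseteq K$ yields $f^{a_j}(\bar{x})\preceq_K v_j^{\bar{x}}$. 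Combined with $f^{a_j}(\bar{x})\in F(\bar{x})$ and $v_j^{\bar{x}}\in\mathrm{Min}(F(\bar{x}),K)$, the minimality definition forces $f^{a_j}(\bar{x})=v_j^{\bar{x}}$, hence $a_j\in I_{v_j^{\bar{x}}}(\bar{x})$.

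Therefore $a=(a_1,\dots,a_{\bar{\omega}})\in P_{\bar{x}}$, and along the extracted subsequence $\tilde f^{a}(x_k)\prec_{\tilde K}\tilde f^{a}(\bar{x})$, showing that $\bar{x}$ is not a local weakly minimal solution of the associated \eqref{MOP_x_k}. The main obstacle I expect is in this converse direction, namely guaranteeing that the dominating indices extracted from the perturbations actually assemble into an element of the partition set $P_{\bar{x}}$; this is exactly where the combination of finiteness of $[p]$, continuity of the $f^i$, closedness of $K$, and the definition of $\mathrm{Min}(F(\bar{x}),K)$ is essential.
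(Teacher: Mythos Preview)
The paper does not supply its own proof of this lemma; it is quoted from \cite{bouza2021steepest} and stated without argument. Your proposal is therefore not comparable against a proof in the present paper, but it stands on its own as a correct and complete argument. Both directions are handled properly: the forward direction uses the domination property (Proposition~\ref{min+A}) and the inclusion $\mathrm{int}(K)+K\subseteq\mathrm{int}(K)$ exactly as needed, and the converse direction correctly combines the pigeonhole extraction over the finite index set $[p]$ with continuity and closedness of $K$ to force $f^{a_j}(\bar{x})=v_j^{\bar{x}}$ via the minimality of $v_j^{\bar{x}}$, so that the extracted tuple genuinely lies in $P_{\bar{x}}$. One cosmetic remark: in the forward direction the sequence formulation is unnecessary, since your construction shows pointwise that any single $x$ with $\tilde f^{a}(x)\prec_{\tilde K}\tilde f^{a}(\bar{x})$ already yields $F(x)\prec^{\ell}F(\bar{x})$; but this does not affect correctness.
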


\smallskip

Lemma \ref{Vectorization} allows us to determine whether a point \(\bar{x} \in \mathbb{R}^{n}\) is a local weakly minimal point of the problem \eqref{SPL} by verifying if \(\bar{x}\) is a local weakly minimal point of all the multiobjective optimization problems \eqref{MOP_x_k} corresponding to the partition set $P_{\bar x}$. Thus, a question may naturally arise: Can we be able to identify a local weakly minimal point of the problem \eqref{SPL} by solving the multiobjective optimization problems \eqref{MOP_x_k} for all $a$ in $P_{\bar x}$? The answer is evidently `yes' provided $\bar x$ is given. However, the practical question is `how to identify $\bar x$ through a systematic way so that $\bar x$ is a local weakly minimal point of \eqref{MOP_x_k} for all $a$ in $P_{\bar x}$?' To answer this question, note that 
\begin{enumerate} 
\item[(i)] to employ Lemma \ref{Vectorization} for identifying a local weakly minimal point of \eqref{SPL}, we need to first guess a point $\bar x$, which is not a trivial task, and 
\item[(ii)] the construction of the objective function of \eqref{MOP_x_k} exclusively depends on the partition set $P_{\bar{x}}$ at $\bar x$.  
\end{enumerate} 
To identify or guess a point $\bar x$ which may satisfy Lemma \ref{Vectorization}, we commonly aim to generate a sequence of iterates $\{x_k\}$ whose one of the limit points is $\bar x$. In generating the sequence $\{x_k\}$, if the current iterate ${x}_k$ is not a local weakly minimal solution of \eqref{MOP_x_k} corresponding to at least one element of $P_{x_k}$, then by Lemma \ref{Vectorization}, $x_k$ is not a local weakly minimal point of \eqref{SPL}. So, we must proceed to finding the next iterate $x_{k+1}$. As $x_{k + 1}$ is different from ${x}_k$, the partition set $P_{x_{k+1}}$ at $x_{k + 1}$ may be different from the partition set $P_{x_k}$. Accordingly, the collection of problems \eqref{MOP_x_k} at $x_k$ may be different than that at $x_{k+1}$. Therefore, in general, identification of a local weakly minimal point of \eqref{SPL} does not amount to solving just a collection of multiobjective optimization problems \eqref{MOP_x_k} corresponding to all the elements of the partition set $P_{\bar x}$. In this paper, in fact, we will see that we need to solve a sequence of a class of multiobjective optimization problems to identify a weakly minimal point of \eqref{SPL}. \\

Based on Lemma \ref{Vectorization}, we discuss below a necessary condition for weakly minimal points of \eqref{SPL} using the concept of stationary points. \\

\begin{definition}\emph{\cite{kumar2024nonlinear}}\label{Stat. cond.} 
A point $\bar{x}$ is said to be a stationary point of \eqref{SPL} if 
    $$\forall~ a\in P_{\bar{x}} \text{ and } d\in \mathbb{R}^{n},~ \exists~ j\in [\bar{\omega}]:~\nabla f^{a_{j}}(\bar{x})^{\top}d\notin -\mathrm{int}(K).$$ 
\end{definition}

Notice from Theorem 3.1 and Proposition 3.1 of \cite{bouza2021steepest} that a weakly minimal point of \eqref{SPL} is necessarily a stationary point of \eqref{SPL}. \\ 

It is important to note that in this work, the primary goal is to find weakly minimal solutions of the problem \eqref{SPL}. However, the identification of weakly minimal points poses significant computational challenges due to the difficulty of finding a computationally viable stopping condition that ensures the weak minimality of a point. Therefore, we end up finding stationary points of \eqref{SPL}, similar to the conventional first-order optimization methods. \\

For the identification of a computationally viable necessary condition for weakly minimal points of \eqref{SPL}, we consider a parametric family of functions $\varphi_{x}: ~P_x\times \mathbb{R}^{n}\rightarrow \mathbb{R}$, $x \in \mathbb{R}^n$, with the help of Gerstewitz scalarizing function \eqref{Grit}, as follows:
\begin{align}\label{varphhi}
 \varphi_{x}(a,d) :=\mathcal{F}^{a}(x,d)+\tfrac{1}{2}\|d\|^{2},
\end{align}
 where  $a\in P_x,~d\in \mathbb{R}^{n}$, and the expression of $\mathcal{F}^{a}(x,d)$ is given by
\begin{align}\label{F_abar}
    \mathcal{F}^{a}(x,d):=\underset{j\in [\omega(x)]}{\max} ~ \psi_{e}\left(\nabla f^{a_j}(x)^{\top}d\right).
\end{align}
Note that for each $x\in \mathbb{R}^n$ and $a\in P_{x}$, the function $\varphi_{x}(a,\cdot)$ is strongly convex in $\mathbb{R}^n$. Therefore, the function $\varphi_{x}(a,\cdot)$ attains a unique minimum over $\mathbb{R}^n$. Also, for any $a \in P_{x}$,
\begin{equation}\label{dg_aux}
    \underset{d\in \mathbb{R}^n}{\min}\varphi_{x}(a,d) \leq \varphi_{x}(a,0) = 0.
\end{equation}
Moreover, if $d_{x} \in \mathbb{R}^{n}$ is such that $\varphi_{x}(a,d_{x})= \underset{d\in \mathbb{R}^n}{\min}\varphi_{x}(a,d)$, then
\begin{equation*}
     \underset{d\in \mathbb{R}^n}{\min}\varphi_{x}(a,d)=0 \Longleftrightarrow d_{x} = 0.
\end{equation*}
 Since $P_{x}$ is a finite set, the minimum of $\varphi_{x}$ exists over the set $P_x\times \mathbb{R}^{n}$. Consequently, we define a function $\phi:\mathbb{R}^{n}\rightarrow \mathbb{R}$ by 
\begin{align}\label{phii}
  \phi(x) :=\underset{(a,d)\in P_{x}\times\mathbb{R}^{n}}{\min} ~ \varphi_{x}(a,d).  
\end{align}
Notice from \eqref{dg_aux} that for all $x \in \mathbb{R}^n$, $\phi(x) \leq 0$. Also, if $(a_{x},d_{x})\in P_{x}\times \mathbb{R}^n$ is such that $\phi(x)=\varphi_{x}(a_{x},d_{x})$, then
\begin{equation}\label{Cond_req_Nec}
    \phi(x) =0 \Longleftrightarrow d_{x} = 0.
\end{equation}
\begin{framed}
In the rest of the paper, we denote  
\begin{align*}
     (a_{x},u_{x}) \in \argmin_{(a, d) \in P_x \times \mathbb{R}^n} ~ \varphi_{x}(a,d),~~\text{i.e.},~ 
     \phi(x) = \varphi_{x}(a_{x},u_{x}),
\end{align*}
and 
\begin{enumerate} 
\item[(i)] at a point $\bar{x}$, we use the notations $\bar{a}$, $\bar{u}$, and $\bar{\mathcal{F}}(\cdot,\cdot)$ instead  of $a_{\bar{x}}$, $u_{\bar{x}}$, and $\mathcal{F}^{\bar{a}}(\cdot,\cdot)$, respectively, and 
\item[(ii)] for an iterative sequence $\{x_k\}$, for any $k \in \mathbb{N}$, we use the notations $P_k$, $\omega_k$, $a_k$, $u_k$, and $\mathcal{F}^{k}(\cdot,\cdot)$ instead of $P_{x_k}$, $\omega(x_k)$, $a_{x_k}$, $u_{x_k}$, and $\mathcal{F}^{{a}_k} (\cdot,\cdot)$, respectively. 
\end{enumerate}
\end{framed}

To find a necessary condition of local weakly minimal points of \eqref{SPL}, let  $\bar{x}$ be a local weakly minimal point of \eqref{SPL}. Then, as $\bar{x}$ is a stationary point, we have from Definition \ref{Stat. cond.} that for any $a\in P_{\bar{x}}$ and $d \in \mathbb{R}^{n}$ there exists $j\in [\bar{\omega}]$ such that $\psi_{e}\left(\nabla f^{\bar{a}_{j}}(\bar{x})^{\top}d\right)\geq 0$. So, at $\bar{x}$, we have $\bar{\mathcal{F}}(\bar{x},d)\geq 0$ for all $d\in \mathbb{R}^{n}$. Therefore, from \eqref{varphhi}, we have 
\[\varphi_{\bar{x}}(\bar{a},d) =\bar{\mathcal{F}}(\bar{x},d)+\tfrac{1}{2}\|d\|^{2}\geq 0,\] and hence $\phi(\bar x) \ge 0$. Thus, by (\ref{F_abar}), at a local weakly minimal point $\bar x$ of \eqref{SPL}, we necessarily have $\phi(\bar x) = 0$. Consequently, from \eqref{Cond_req_Nec}, we get $\bar{u}=0$. Accumulating all, we have the following remark. \\

\begin{remark}\label{ex_a_u}
Consider the problem \eqref{SPL}.
\begin{enumerate}[label=(\roman*)]

    \item  Let $\bar{x} \in \mathbb{R}^n$. Then, there exists $(\bar{a},~\bar{u})\in P_{\bar{x}} \times \mathbb{R}^{n}$ such that $\phi(\bar{x})=\varphi_{\bar{x}}(\bar{a},\bar{u})$.

    \item \emph{(Necessary condition of local weakly minimal points).} Let $\bar{x} \in \mathbb{R}^n$ be a local weakly minimal point of \eqref{SPL} and $(\bar{a},\bar{u})\in P_{\bar{x}} \times \mathbb{R}^{n}$ be such that $\phi(\bar{x})=\varphi_{\bar{x}}(\bar{a},\bar{u})$.  Then, $\bar{u}=0.$ \\ 
    
\end{enumerate}
\end{remark}

\begin{proposition}\label{Pro. station. cond.}
\emph{\cite{bouza2021steepest}} 
Consider the functions $\varphi_{\bar{x}}$ and $\phi$ as given in \eqref{varphhi} and \eqref{phii}, respectively. Let $(\bar{a},\bar{u})\in P_{\bar{x}}\times \mathbb{R}^{n}$ be such that $\phi(\bar{x})=\varphi_{\bar{x}}(\bar{a},\bar{u})$. Then, the following results hold:
\begin{enumerate}[label=(\roman*)]
    \item If $\bar{x}$ is a stationary point of \eqref{SPL}, then $\bar{u}=0$ and $\phi(\bar{x})=0$.

    \item If $\bar{x}$ is a nonstationary point of \eqref{SPL}, then $\bar{u}\neq 0$ and $\phi(\bar{x})<0$.  \\ 
\end{enumerate}
\end{proposition}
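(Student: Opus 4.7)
My plan is to translate the stationarity condition into scalar inequalities via the Gerstewitz function, then exploit the structure of $\varphi_{\bar x}$ together with the observations \eqref{dg_aux} and \eqref{Cond_req_Nec} that are already in place. The key bridge is Lemma \ref{pro_Ger.}(iv), which gives the equivalence $y \notin -\mathrm{int}(K) \Longleftrightarrow \psi_e(y) \geq 0$. Applying this to $y = \nabla f^{a_j}(\bar x)^\top d$ translates Definition \ref{Stat. cond.} into the purely scalar statement that $\bar x$ is stationary if and only if $\bar{\mathcal{F}}(\bar x,d) = \max_{j \in [\bar\omega]} \psi_e(\nabla f^{a_j}(\bar x)^\top d) \geq 0$ for every $a \in P_{\bar x}$ and every $d \in \mathbb{R}^n$.

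For part (i), I would assume $\bar x$ is stationary. The scalar reformulation above gives $\mathcal{F}^{a}(\bar x,d) \geq 0$ for all $(a,d) \in P_{\bar x}\times \mathbb{R}^n$, so $\varphi_{\bar x}(a,d) = \mathcal{F}^{a}(\bar x,d) + \tfrac{1}{2}\|d\|^2 \geq 0$ throughout $P_{\bar x}\times \mathbb{R}^n$. Taking the minimum yields $\phi(\bar x) \geq 0$, and combining with $\phi(\bar x) \leq 0$ from \eqref{dg_aux} forces $\phi(\bar x)=0$. Then $\bar u = 0$ follows immediately from \eqref{Cond_req_Nec}.

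For part (ii), if $\bar x$ is nonstationary, the negation of Definition \ref{Stat. cond.} provides some $\hat a \in P_{\bar x}$ and $\hat d \in \mathbb{R}^n$ with $\nabla f^{\hat a_j}(\bar x)^\top \hat d \in -\mathrm{int}(K)$ for every $j \in [\bar\omega]$, so by Lemma \ref{pro_Ger.}(iv) the quantity $\mathcal{F}^{\hat a}(\bar x,\hat d) < 0$. The main obstacle is to upgrade this to $\phi(\bar x) < 0$, and the trick is to exploit positive homogeneity of $\psi_e$ (Lemma \ref{pro_Ger.}(i)): for $t>0$,
\begin{equation*}
\varphi_{\bar x}(\hat a, t\hat d) = t\,\mathcal{F}^{\hat a}(\bar x,\hat d) + \tfrac{t^2}{2}\|\hat d\|^2 = t\left(\mathcal{F}^{\hat a}(\bar x,\hat d) + \tfrac{t}{2}\|\hat d\|^2\right).
\end{equation*}
For sufficiently small $t>0$ (for instance any $t \in (0,\,-2\mathcal{F}^{\hat a}(\bar x,\hat d)/\|\hat d\|^2)$), the bracketed term stays strictly negative, hence $\phi(\bar x) \leq \varphi_{\bar x}(\hat a, t\hat d) < 0$. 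The conclusion $\bar u \neq 0$ then follows from the contrapositive of \eqref{Cond_req_Nec}, since $\phi(\bar x) \neq 0$.

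Overall, the proof is short because all the heavy lifting is already packaged in the Gerstewitz calculus and in the preparatory observations \eqref{dg_aux}--\eqref{Cond_req_Nec}; the only subtlety is the rescaling step in part (ii), which converts an arbitrary descent direction witnessing nonstationarity into one that actually makes $\varphi_{\bar x}$ strictly negative despite the quadratic regularizer.
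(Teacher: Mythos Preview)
Your argument is correct. Note, however, that the paper does not actually supply its own proof of this proposition: it is quoted from \cite{bouza2021steepest} and left unproved. The only related reasoning in the paper is the short paragraph preceding Remark~\ref{ex_a_u}, which sketches exactly your argument for part~(i) (translating stationarity into $\mathcal{F}^{a}(\bar x,d)\ge 0$ via Lemma~\ref{pro_Ger.}(iv), concluding $\phi(\bar x)\ge 0$, and then invoking \eqref{dg_aux} and \eqref{Cond_req_Nec}). For part~(ii) the paper gives nothing beyond the citation; your rescaling step $d\mapsto t\hat d$ to defeat the quadratic regularizer is the natural device and is sound (implicitly $\hat d\neq 0$, since $\hat d=0$ would give $\nabla f^{\hat a_j}(\bar x)^\top \hat d = 0\notin -\mathrm{int}(K)$, contradicting the nonstationarity witness). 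So your proof both fills in what the paper omits and aligns with the fragment it does provide.
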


From Remark \ref{ex_a_u} (ii) and Proposition \ref{Pro. station. cond.}, we note that we can devise a stopping condition of a numerical scheme $\{x_k\}$ to identify a local weakly minimal point of \eqref{SPL} as follows. At the current iterate $x_k$, we identify a point $(a_k, u_k) \in P_{k} \times \mathbb{R}^n$ so that 
\[(a_k, u_k) \in \argmin_{(a, d) \in P_{k} \times \mathbb{R}^n} ~ \varphi_{x_k}(a,d). \]
If $u_k = 0$, then we stop generating successive points since $x_k$ satisfies a necessary condition for local weak minimality. If, however, $u_k \neq 0$, then in the following section, we show that $u_k$ is a $K$-descent direction for $F$ at $x_k$. 
So, a computationally viable stopping condition of the numerical scheme $\{x_k\}$ can be $\|u_k\| < \varepsilon$ for a given precision scalar $\varepsilon > 0$. \\

For generating a sequence $\{x_k\}$ that aims to identify a weakly minimal point of \eqref{SPL}, in this study, we follow the common process as in conventional optimization problems. Starting with an initial point $x_0$, the iterates are generated by 
\begin{equation}\label{line.str}
    x_{k+1}:=x_{k}+\alpha_{k} d_{k}, ~ k=0,1,2,\ldots, 
\end{equation}
where $d_k$ is a $K$-descent direction of the set-valued mapping $F$ at $x_k$ and $\alpha_k$ is a suitable step-length along $d_k$, determined by a line search procedure. \\

In the next section, for the set-valued objective function $F$ of \eqref{SPL}, we discuss the notion of $K$-descent direction and introduce the concept of Wolfe line search techniques to generate a suitable step-size $\alpha_k$. Later, in Section \ref{section4}, we propose a conjugate gradient scheme for identifying $K$-desccent directions for set-valued functions.

\section{$K$-descent direction and Wolfe line searches }\label{sec_Ana_wofe}


We define the concept of a descent direction for the set-valued objective function $F$ of the problem \eqref{SPL} in the following way. Suppose $\bar{x}$ is a nonstationary point of \eqref{SPL}. Then, by Definition \ref{Stat. cond.}, there exists a $d\in \mathbb{R}^{n}$ such that 
\begin{align*}
\forall~ j\in [\bar{\omega}]: \nabla f^{\bar{a}_{j}}(\bar{x})^{\top}d \in -\mathrm{int}(K) ~~\Longleftrightarrow~~ \bar{\mathcal{F}}(\bar{x}, d)<0.
\end{align*}
Consequently, for sufficiently small $\alpha>0$, we have
\[\forall~ j\in [\bar{\omega}]: ~f^{\bar{a}_{j}}(\bar{x}+\alpha d) \prec_{K} f^{\bar{a}_{j}}(\bar{x}). \]
Thus, we have
\begin{align*}
    F(\bar{x}) &\subseteq \left\{f^{\bar{a}_{1}}(\bar{x}),f^{\bar{a}_{2}}(\bar{x}),\ldots, f^{\bar{a}_{\bar{\omega}}}(\bar{x})\right\}+K\\
    &\subseteq \left\{f^{\bar{a}_{1}}(\bar{x}+\alpha d),f^{\bar{a}_{2}}(\bar{x}+\alpha d),\ldots, f^{\bar{a}_{\bar{\omega}}}(\bar{x}+\alpha d)\right\} + \mathrm{int}(K)+ K\\
     &\subseteq \left\{f^{\bar{a}_{1}}(\bar{x}+\alpha d),f^{\bar{a}_{2}}(\bar{x}+\alpha d),\ldots, f^{\bar{a}_{\bar{\omega}}}(\bar{x}+\alpha d)\right\} + \mathrm{int}(K),
\end{align*}
which implies $
F(\bar{x}+\alpha d)  \prec^{\ell} F(\bar{x}).$ Accordingly, a descent direction of $F$ is defined as follows. \\

\begin{definition}\label{Definition of K_descent}
Consider the functions $\varphi_{\bar{x}}$ and $\phi$ as given in \eqref{varphhi} and \eqref{phii}, respectively. Let $(\bar{a},\bar{u})\in P_{\bar{x}}\times \mathbb{R}^{n}$ be such that $\phi(\bar{x})=\varphi_{\bar{x}}(\bar{a},\bar{u})$. Then, at a point $\bar{x}$, a direction $d \in \mathbb{R}^{n}$ 
\begin{enumerate} 
\item[(i)] is said to be a $K$-descent direction of $F$ if \begin{equation}\label{descent}
      \bar{\mathcal{F}}(\bar{x},d)< 0, \text{ and } 
   \end{equation} 
\item[(ii)] is said to be satisfying a sufficient descent condition if there exists $c>0$ such that 
   \begin{align}\label{suf_d}
   \bar{\mathcal{F}}(\bar{x}, d)\leq c~\bar{\mathcal{F}}(\bar{x},\bar{u}).   
   \end{align}
\end{enumerate}
\end{definition}

Notice that if $x_k$ is a nonstationary point of the objective function $F$ of \eqref{SPL}, then $\phi(x_k) < 0$. If $(a_k,u_k)\in P_{k} \times \mathbb{R}^{n}$ be such that $\phi({x}_k)=\varphi_{{x}_k}({a}_k,{u}_k)$, then $${\mathcal{F}^{k}}({x}_k, {u}_k) < - \tfrac{1}{2} \|\bar{u}_k\|^{2}<0.$$ Therefore, ${u}_k$ is a $K$-descent direction of $F$ at ${x}_k$. 
\\

Next, we discuss line search techniques along a $K$-descent direction of $F$. For a continuously differentiable vector-valued function $f:\mathbb{R}^{n}\rightarrow \mathbb{R}^{m}$, along a $K$-descent direction $d$ at $\bar{x}$, Drummond et al.\ \cite{drummond2005steepest} extended the Armijo condition for the steepest descent method as follows: for $0<\rho<1$, there exists an $\bar{\alpha}>0$ such that
\begin{equation}\label{line search for steepest}
    \forall \alpha \in (0,\bar{\alpha}):  f(\bar{x}+\alpha d)\prec_{K} f(x) + \rho \alpha \nabla f(\bar{x})^{\top}d.
\end{equation}
In the context of the set optimization problem \eqref{SPL}, Bouza et al. \cite{bouza2021steepest} extended the Armijo condition \eqref{line search for steepest} for the set-valued map $F$ as follows: for $0<\rho<1$, there exists an $\bar{\alpha}>0$ such that 
\[ \forall \alpha \in (0,\bar{\alpha}): F(\bar{x}+\alpha \bar{u})\preceq^{{\ell}}\left\{f^{\bar{a}_{j}}(\bar{x})+\rho \alpha \nabla f^{\bar{a}_{j}}(\bar{x})^{\top} \bar{u}\right\}_{j\in [\bar{\omega}]}\prec^{{\ell}}F(\bar{x}).\]

\noindent In \cite{lucambio2018nonlinear}, the conventional  Wolfe line search conditions have been extended for vector optimization and utilized for analyzing the convergence of nonlinear conjugate gradient methods. Based upon the Wolfe line search in \cite{lucambio2018nonlinear}, we extend it for the set optimization problem \eqref{SPL} as follows. \\

\begin{definition}
    Let $\bar{d}$ be a $K$-descent direction for $F$ at $\bar{x}$. Consider $e \in \mathrm{int}(K)$ and $0<\rho<\sigma<1$. We say that $\alpha>0$ satisfies the standard Wolfe conditions if 
\begin{subequations}\label{Arm with stan. wol.}
    \begin{align}
       & F(\bar{x}+\alpha \bar{d})\preceq^{{\ell}}\left\{f^{\bar{a}_{j}}(\bar{x})+\rho \alpha \bar{\mathcal{F}}(\bar{x},\bar{d})e\right\}_{j\in [\bar{\omega}]}\prec^{{\ell}}F(\bar{x}) \label{Arm in Stand.} \\
       \noalign{\noindent \text{and}}
       & \bar{\mathcal{F}}(\bar{x}+\alpha\bar{d},\bar{d})\geq \sigma\bar{\mathcal{F}}(\bar{x},\bar{d}).\label{stan, wol.}
    \end{align}
\end{subequations}
We say that $\alpha>0$ satisfies the strong Wolfe conditions if 
\begin{subequations}\label{Arm with strong. wol.}
    \begin{align}
       & F(\bar{x}+\alpha \bar{d})\preceq^{{\ell}}\left\{f^{\bar{a}_{j}}(\bar{x})+\rho \alpha \bar{\mathcal{F}}(\bar{x},\bar{d})e\right\}_{j\in [\bar{\omega}]}\prec^{{\ell}}F(\bar{x})  \label{Armi_con}  \\
       \noalign{\noindent \text{and}}
       & \left\lvert \bar{\mathcal{F}}(\bar{x}+\alpha\bar{d},\bar{d}) \right\rvert \leq \sigma \left \lvert \bar{\mathcal{F}}(\bar{x},\bar{d})\right\rvert.\label{strong. wol.}
    \end{align}
\end{subequations}
\end{definition}

\begin{framed} 
In \cite{lucambio2018nonlinear}, the authors extended the traditional Wolfe line search conditions for vector optimization only for the case where \(K\) is finitely generated and demonstrated the existence of a step length that satisfies these Wolfe conditions.  However, \emph{it remains an open problem to extend the Wolfe line search for a nonfinitely generated cone $K$, as mentioned in} \cite{lucambio2018nonlinear}. It is important to note that in this entire study, we are working without assuming that $K$ is finitely generated. In the following theorem, we attempt to address this open problem using the Gerstewitz scalarizing function and a generator $C$ for the cone $K^{*}$ given in \eqref{Grit} and  \eqref{gen_polar_cone}, respectively.  
\end{framed}
\smallskip
\smallskip

\begin{theorem}\label{exi.step}
    Suppose that $\bar{x}$ is a nonstationary point of \eqref{SPL} and $(\bar{a},\bar{u})\in P_{\bar{x}}\times \mathbb{R}^{n}$ is an element such that $\phi(\bar{x})=\varphi_{\bar{x}}(\bar{a},\bar{u})$, where $\phi$ and $\varphi_{\bar{x}}$ are given in \eqref{phii} and \eqref{varphhi}, respectively. Furthermore, let $\bar{d}$ be a $K$-descent direction of $F$ at $\bar{x}$ and there exists an $\mathcal{A}\in \mathbb{R}^{m}$ such that for all $j\in [\bar{\omega}]$ and $\alpha>0$,
    \begin{equation}\label{bou_fun_nbd}
        \mathcal{A}\prec_{K} f^{\bar{a}_{j}}(\bar{x}+\alpha \bar{d}).
    \end{equation}
Then, for any given $\rho \in (0,1),~\sigma \in (\rho, 1)$, and $e\in \mathrm{int}(K)$, there exists an  interval $I\subset \mathbb{R}^{+}$ such that any $\alpha \in I$ satisfies the strong Wolfe conditions \eqref{Arm with strong. wol.}. 
    \end{theorem}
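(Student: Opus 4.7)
The plan is to reduce the Armijo portion of the strong Wolfe conditions to a scalar inequality via the Gerstewitz function, identify the first step $\alpha^{*}>0$ at which this scalar Armijo is tight, and then use a fundamental-theorem-of-calculus argument to locate an interior step $\alpha_{m}\in(0,\alpha^{*})$ at which the curvature condition is automatically satisfied; the desired interval $I$ will then be a neighborhood of $\alpha_{m}$.

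First I would observe that the strict relation $\{f^{\bar{a}_{j}}(\bar{x})+\rho\alpha\bar{\mathcal{F}}(\bar{x},\bar{d})e\}_{j}\prec^{\ell}F(\bar{x})$ holds for every $\alpha>0$, because $\rho\alpha\bar{\mathcal{F}}(\bar{x},\bar{d})e\in-\mathrm{int}(K)$ and Proposition \ref{min+A} gives $F(\bar{x})\subseteq\{f^{\bar{a}_{j}}(\bar{x})\}_{j}+K$. Using the shift identity $\psi_{e}(y-te)=\psi_{e}(y)-t$ together with Lemma \ref{pro_Ger.}(iv), the remaining half $F(\bar{x}+\alpha\bar{d})\preceq^{\ell}\{f^{\bar{a}_{j}}(\bar{x})+\rho\alpha\bar{\mathcal{F}}(\bar{x},\bar{d})e\}_{j}$ of \eqref{Armi_con} is implied by the scalar inequality
\[
\Phi(\alpha):=\max_{j\in[\bar{\omega}]}\psi_{e}\bigl(f^{\bar{a}_{j}}(\bar{x}+\alpha\bar{d})-f^{\bar{a}_{j}}(\bar{x})\bigr)\;\le\;\rho\alpha\bar{\mathcal{F}}(\bar{x},\bar{d}).
\]
The function $\Phi$ is continuous on $[0,\infty)$, with $\Phi(0)=0$; a first-order Taylor expansion of each $f^{\bar{a}_{j}}$ combined with positive homogeneity, subadditivity, and Lipschitz continuity of $\psi_{e}$ (Lemma \ref{pro_Ger.}(i),(iii)) gives $\Phi(\alpha)/\alpha\to\bar{\mathcal{F}}(\bar{x},\bar{d})$ as $\alpha\downarrow 0$, so the scalar Armijo holds strictly on a right neighborhood of $0$. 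Assumption \eqref{bou_fun_nbd} together with Lemma \ref{pro_Ger.}(ii) makes $\Phi$ bounded below on $(0,\infty)$, while $\rho\alpha\bar{\mathcal{F}}(\bar{x},\bar{d})\to-\infty$; hence $\alpha^{*}:=\inf\{\alpha>0:\Phi(\alpha)>\rho\alpha\bar{\mathcal{F}}(\bar{x},\bar{d})\}$ is a positive real number, and by continuity $\Phi(\alpha^{*})=\rho\alpha^{*}\bar{\mathcal{F}}(\bar{x},\bar{d})$.

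The main obstacle is that $\Phi$ is a maximum over the compact set $C$ (via the dual representation $\psi_{e}(y)=\max_{w\in C}w^{\top}y$, a consequence of $K=K^{**}$ and the normalization $w^{\top}e=1$ defining $C$) and over a finite index set, hence not differentiable, so a direct mean value theorem for smooth scalar functions is unavailable. I would bypass this by picking $(j^{*},w^{*})\in[\bar{\omega}]\times C$ that attain $\Phi(\alpha^{*})$ and working with the $C^{1}$ auxiliary function $\alpha\mapsto (w^{*})^{\top}(f^{\bar{a}_{j^{*}}}(\bar{x}+\alpha\bar{d})-f^{\bar{a}_{j^{*}}}(\bar{x}))$, which coincides with $\Phi$ at $\alpha^{*}$ and is pointwise bounded above by $\Phi$ on $[0,\alpha^{*}]$. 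The fundamental theorem of calculus then gives
\[
\rho\alpha^{*}\bar{\mathcal{F}}(\bar{x},\bar{d})=\int_{0}^{\alpha^{*}}(w^{*})^{\top}\nabla f^{\bar{a}_{j^{*}}}(\bar{x}+\alpha\bar{d})^{\top}\bar{d}\,d\alpha\;\le\;\int_{0}^{\alpha^{*}}\bar{\mathcal{F}}(\bar{x}+\alpha\bar{d},\bar{d})\,d\alpha,
\]
where the last step uses $w^{\top}y\le\psi_{e}(y)$ for every $w\in C$ and the definition of $\bar{\mathcal{F}}$ as a maximum over $j$. Since $\alpha\mapsto\bar{\mathcal{F}}(\bar{x}+\alpha\bar{d},\bar{d})$ is continuous, equals $\bar{\mathcal{F}}(\bar{x},\bar{d})<\rho\bar{\mathcal{F}}(\bar{x},\bar{d})$ at $\alpha=0$, and has average on $[0,\alpha^{*}]$ no smaller than $\rho\bar{\mathcal{F}}(\bar{x},\bar{d})$, a standard intermediate value argument (excluding the boundary case $\alpha_{m}=\alpha^{*}$, which would make the integral strictly less than $\rho\alpha^{*}\bar{\mathcal{F}}(\bar{x},\bar{d})$) produces $\alpha_{m}\in(0,\alpha^{*})$ with $\bar{\mathcal{F}}(\bar{x}+\alpha_{m}\bar{d},\bar{d})=\rho\bar{\mathcal{F}}(\bar{x},\bar{d})$.

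At this $\alpha_{m}$, the Armijo condition \eqref{Armi_con} holds because $\alpha_{m}<\alpha^{*}$, and the strong-Wolfe curvature condition \eqref{strong. wol.} holds with strict inequality since $\lvert\bar{\mathcal{F}}(\bar{x}+\alpha_{m}\bar{d},\bar{d})\rvert=\rho\lvert\bar{\mathcal{F}}(\bar{x},\bar{d})\rvert<\sigma\lvert\bar{\mathcal{F}}(\bar{x},\bar{d})\rvert$ by the hypothesis $\rho<\sigma$. Continuity of $\alpha\mapsto\Phi(\alpha)-\rho\alpha\bar{\mathcal{F}}(\bar{x},\bar{d})$ and of $\alpha\mapsto\bar{\mathcal{F}}(\bar{x}+\alpha\bar{d},\bar{d})$ then preserves both inequalities on a nondegenerate interval $I\subset(0,\alpha^{*})$ around $\alpha_{m}$, yielding the required conclusion.
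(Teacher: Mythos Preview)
Your proof is correct and follows essentially the same route as the paper: both arguments establish the Armijo-type inequality for small $\alpha$, locate the first step $\alpha^{*}$ (the paper's $\bar{\alpha}$) where it becomes tight via the lower bound \eqref{bou_fun_nbd}, select an active pair $(j^{*},w^{*})\in[\bar{\omega}]\times C$ at that step, and then run a one-dimensional calculus argument followed by the intermediate value theorem to produce an interior step where the curvature condition holds with equality $\rho\bar{\mathcal{F}}(\bar{x},\bar{d})$. The only cosmetic difference is that you invoke the fundamental theorem of calculus and an integral-average argument where the paper applies Rolle's theorem to the same smooth auxiliary function; your explicit scalarization $\Phi(\alpha)=\max_{j}\psi_{e}(f^{\bar{a}_{j}}(\bar{x}+\alpha\bar{d})-f^{\bar{a}_{j}}(\bar{x}))$ is a clean repackaging of the paper's vector inequality \eqref{Arm.vec}.
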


\begin{proof}
We divide the proof for the condition \eqref{Armi_con} into two steps. \\ 
    
\noindent
Step 1: \emph{We show that there exists an $\tilde{\alpha}>0$ such that for all $\alpha \in (0,\tilde{\alpha}]$},
\begin{equation}\label{Arm.vec}
f^{\bar{a}_{j}}(\bar{x}+\alpha \bar{d})\preceq_{K}f^{\bar{a}_{j}}(\bar{x})+\rho \alpha \bar{\mathcal{F}}(\bar{x},\bar{d})e \text{ for all } j\in [\bar{\omega}]. 
\end{equation}
    Assume on contrary to \eqref{Arm.vec} that there exists a sequence $\{\alpha_{k}\}\subset \mathbb{R}^{+}$ and $\hat{j} \in[\bar{ \omega} ]$ such that $\alpha_{k}\rightarrow 0$ and for all $k \ge 0$, 
    \begin{align*}
   f^{\bar{a}_{\hat{j}}}(\bar{x}+\alpha_{k} \bar{d})-f^{\bar{a}_{\hat{j}}}(\bar{x})-\rho \alpha_{k} \bar{\mathcal{F}}(\bar{x},\bar{d})e\notin -K.
   \end{align*}
   Note that $\mathbb{R}^{m}\setminus(-K) \text{ is a cone and }\alpha_{k}>0$. Therefore,
    \begin{align*}
   \tfrac{f^{\bar{a}_{\hat{j}}}(\bar{x}+\alpha_{k} \bar{d})-f^{\bar{a}_{\hat{j}}}(\bar{x})}{\alpha_{k}}-\rho \bar{\mathcal{F}}(\bar{x},\bar{d})e\notin -\mathrm{int}(K)  \text{ for all } k \ge 0.
    \end{align*}
  Thus, if $ k\rightarrow \infty$, then we get
   \begin{align}\label{Arm. contra}
   \nabla {f^{\bar{a}_{\hat{j}}}{(\bar{x})}}^{\top}\bar{d}-\rho \bar{\mathcal{F}}(\bar{x},\bar{d})e\notin -\mathrm{int}( K).
    \end{align}
    However, from Lemma \ref{pro_Ger.} (i) and the definition of $\bar{\mathcal{F}}$ in \eqref{F_abar}, we have
    \begin{align*}
       \psi_{e} \left({\nabla {f^{\bar{a}_{\hat{j}}}{(\bar{x})}}^{\top}\bar{d}-\rho \bar{\mathcal{F}}(\bar{x},\bar{d})e}\right)&\leq  \psi_{e}\left( {\nabla {f^{\bar{a}_{\hat{j}}}{(\bar{x})}}^{\top}\bar{d}}\right)-\rho \bar{\mathcal{F}}(\bar{x},\bar{d})\psi_{e}(e)\\
       &= \psi_{e}\left( {\nabla {f^{\bar{a}_{\hat{j}}}{(\bar{x})}}^{\top}\bar{d}}\right)-\rho \bar{\mathcal{F}}(\bar{x},\bar{d})\\
       &= \psi_{e}\left( {\nabla {f^{\bar{a}_{\hat{j}}}{(\bar{x})}}^{\top}\bar{d}}\right)-\rho~\underset{j\in[\bar{\omega}]}{\max}\left\{ \psi_{e} \left({\nabla {f^{\bar{a}_{j}}{(\bar{x})}}^{\top}\bar{d}}\right)\right\}\\
       &\leq  \psi_{e}\left( {\nabla {f^{\bar{a}_{\hat{j}}}{(\bar{x})}}^{\top}\bar{d}}\right)-\rho  \psi_{e}\left( {\nabla {f^{\bar{a}_{\hat{j}}}{(\bar{x})}}^{\top}\bar{d}}\right)\\
       &=(1-\rho)\psi_{e}\left( {\nabla {f^{\bar{a}_{\hat{j}}}{(\bar{x})}}^{\top}\bar{d}}\right)< 0
    \end{align*}
    because $0<\rho<1$ and $\psi_{e}\left( {\nabla {f^{\bar{a}_{\hat{j}}}{(\bar{x})}}^{\top}\bar{d}}\right)\leq \bar{\mathcal{F}}(\bar{x},\bar{d})<0.$ Hence,
\begin{align*}  
\nabla {f^{\bar{a}_{\hat{j}}}{(\bar{x})}}^{\top}\bar{d}-\rho \bar{\mathcal{F}}(\bar{x},\bar{d})e\in -\mathrm{int}(K),\end{align*} 
which is contradictory to \eqref{Arm. contra}. Thus, (\ref{Arm.vec}) is followed. \\ 
     
\noindent Step 2: \emph{We show that there exists an interval $I_{0}\subset[0,\infty)$ such that for all $\alpha \in I_0$, } 
    \begin{align*}
        F(\bar{x}+\alpha \bar{d})\preceq^{{\ell}}\left\{f^{\bar{a}_{j}}(\bar{x})+\rho \alpha \bar{\mathcal{F}}(\bar{x},\bar{d})e\right\}_{j\in [\bar{\omega}]}\prec^{{\ell}}F(\bar{x}).
    \end{align*}

    \noindent Observe that the left-hand side of \eqref{Arm.vec} is bounded below because of \eqref{bou_fun_nbd} and the right-hand side of \eqref{Arm.vec} is unbounded below in $[0,\infty)$. Therefore, the relation \eqref{Arm.vec} does not not hold for all $\alpha\in [0,\infty)$. Moreover, for all $j\in [\bar{\omega}]$, the function 
    \[\alpha \longmapsto ~f^{\bar{a}_{j}}(\bar{x})+\rho \alpha \bar{\mathcal{F}}(\bar{x},\bar{d})e-f^{\bar{a}_{j}}(\bar{x}+\alpha \bar{d}) \]
    is continuous on $\mathbb{R}$, and from \eqref{Arm.vec}, all the values of this function belong to $K$ for all $\alpha \in [0,\tilde{\alpha}]$. Since $K$ is a closed cone and $[\bar{\omega}]$ is finite, there exists a maximal element $\bar{\alpha}\in [\tilde{\alpha},\infty)$ such that the relation \eqref{Arm.vec} holds for all $\alpha \in I_{0}= [0,\bar{\alpha}]$. That is, \eqref{Arm.vec} is not true for all $\alpha \in [0,\alpha^{  \diamond}]$, where $\alpha^{  \diamond} >\bar{\alpha}$ and for all $\alpha\in I_{0}$.\\ \\ 
    Using Proposition \ref{min+A} and \eqref{Arm.vec}, we obtain for any $\alpha\in I_{0}$ that
    \begin{align*}
    F(\bar{x})&\subseteq \left\{f^{\bar{a}_{1}}(\bar{x}),f^{\bar{a}_{2}}(\bar{x}),\ldots,f^{\bar{a}_{\bar{\omega}}}(\bar{x})\right\}+K\\
        &\subseteq \left\{f^{\bar{a}_{j}}(\bar{x})+\rho \alpha \bar{\mathcal{F}}(\bar{x},\bar{d})e\right\}_{j\in [\bar{\omega}]}+ \mathrm{int}(K) \text{ as }\rho>0 ,\alpha>0,\text{ and }\bar{\mathcal{F}}(\bar{x},\bar{d}) <0\\
        \smallskip
         &\subseteq \left\{f^{\bar{a}_{j}}(\bar{x}+\alpha \bar{d})\right\}_{j\in [\bar{\omega}]}+\mathrm{int}(K)\\
         \smallskip
         & \subseteq F(\bar{x}+\alpha \bar{d})+\mathrm{int}(K), 
    \end{align*}
    which concludes the proof for \eqref{Armi_con}. \\

 Next, we consider to prove \eqref{strong. wol.}.
     As $\bar{\alpha}$ is the largest element which holds the relation \eqref{Arm.vec} and $[\bar{\omega}]$ is finite, there exist an $\alpha_{k}\in [\bar{\alpha},\bar{\alpha}+\tfrac{1}{k}], \text{ for each } k\in \mathbb{N},$ and a $\bar{j}\in [\bar{\omega}]$ such that
\begin{align}\label{arm_com_N}
         f^{\bar{a}_{\bar{j}}}(\bar{x}+\alpha_{k} \bar{d})\succ_{K}f^{\bar{a}_{\bar{j}}}(\bar{x})+\rho \alpha_{k} \bar{\mathcal{F}}(\bar{x},\bar{d})e \text{ for all } k\in \mathbb{N}.
    \end{align}
   Since $\bar{j}\in [\bar{\omega}]$, from \eqref{Arm.vec}, we deduce that for all $w\in C$, 
    \begin{align}\label{arm_polar}
        w^{\top}\left\{f^{\bar{a}_{\bar{j}}}(\bar{x}+\alpha \bar{d})-f^{\bar{a}_{\bar{j}}}(\bar{x})-\alpha \rho \bar{\mathcal{F}}(\bar{x},\bar{d})e\right\}\leq 0 \text{ for all } \alpha\in [0,\bar{\alpha}].
    \end{align}
    Note that $C$ is compact. Therefore, from \eqref{arm_com_N} and \eqref{arm_polar}, there exists a $\bar{w}\in C$ such that
     \begin{align*}
         \bar{w}^{\top}\left\{f^{\bar{a}_{\bar{j}}}(\bar{x}+\bar{\alpha} \bar{d})-f^{\bar{a}_{\bar{j}}}(\bar{x})-\bar{\alpha} \rho \bar{\mathcal{F}}(\bar{x},\bar{d})e\right\}= 0.
     \end{align*}
     Since $w^{\top}e=1$ for all $w\in C$, we get 
     \begin{align}\label{con_minval}
         \bar{w}^{\top}f^{\bar{a}_{\bar{j}}}(\bar{x}+\bar{\alpha} \bar{d})-\bar{w}^{\top}f^{\bar{a}_{\bar{j}}}(\bar{x})-\bar{\alpha} \rho \bar{\mathcal{F}}(\bar{x},\bar{d})= 0.
     \end{align}
      Now, associated with $\bar{w}$, define two functions ${\mathcal{H}}_{\bar{w}}$ and $\mathcal{I}:\mathbb{R}\rightarrow\mathbb{R}$
      by
      \begin{align*}
         {\mathcal{H}}_{\bar{w}}(\alpha)=\bar{w}^{\top}f^{\bar{a}_{\bar{j}}}(\bar{x}+\alpha \bar{d}) \text{ and }  
         \mathcal{I}(\alpha)={\mathcal{H}}_{\bar{w}}(\alpha)-{\mathcal{H}}_{\bar{w}}(0)-\alpha \rho\bar{\mathcal{F}}(\bar{x},\bar{d}), \text{ respectively}. 
      \end{align*}
      Note that $\mathcal{I}$ is a continuously differentiable function and $\mathcal{I}(0)=\mathcal{I}(\bar{\alpha})$. Therefore, by Mean Value Theorem, there exists an $\hat{\alpha}\in (0,\bar{\alpha})$ such that 
   \begin{align*}
       {\mathcal{I}^{\prime}}(\hat{\alpha})=0, \text{ i.e., } \bar{w}^{\top}\left(\nabla f^{\bar{a}_{\bar{j}}}(\bar{x}+\hat{\alpha}\bar{d})^{\top}\bar{d}\right)=\rho\bar{\mathcal{F}}(\bar{x},\bar{d}).
   \end{align*}
   This implies,
   \begin{align*}
      ~&~ \underset{w\in C}{\max}~ w^{\top}\left(\nabla f^{\bar{a}_{\bar{j}}}(\bar{x}+\hat{\alpha}\bar{d})^{\top}\bar{d}\right)\geq\rho\bar{\mathcal{F}}(\bar{x},\bar{d})\\
      \text{or,} ~&~  \underset{w\in C}{\max}~ w^{\top}\left(-\nabla f^{\bar{a}_{\bar{j}}}(\bar{x}+\hat{\alpha}\bar{d})^{\top}\bar{d}+\rho\bar{\mathcal{F}}(\bar{x},\bar{d})e\right)\leq 0.
   \end{align*}
   As $-K=\{y\in \mathbb{R}^{m} \mid y^{\top}w\leq 0 \text{ for all } w\in K^{*}\}$, we have 
   \begin{align*}
      -\nabla f^{\bar{a}_{\bar{j}}}(\bar{x}+\hat{\alpha}\bar{d})^{\top}\bar{d}+\rho\bar{\mathcal{F}}(\bar{x},\bar{d})e\in -K. 
   \end{align*}
     Then, from Lemma \ref{pro_Ger.} (iv) and (i), we obtain
     \begin{align*}
        ~&~\psi_{e}\left(-\nabla f^{\bar{a}_{\bar{j}}}\left(\bar{x}+\hat{\alpha}\bar{d}\right)^{\top}\bar{d}+\rho\bar{\mathcal{F}}\left(\bar{x},\bar{d}\right)e\right)\leq 0
        \\
        \implies ~&~ \psi_{e}\left(\rho\bar{\mathcal{F}}\left(\bar{x},\bar{d}\right)e\right)-\psi_{e}\left(\nabla f^{\bar{a}_{\bar{j}}}\left(\bar{x}+\hat{\alpha}\bar{d}\right)^{\top}\bar{d}\right) \\ 
        ~&~
       \leq\psi_{e}\left(-\nabla f^{\bar{a}_{\bar{j}}}\left(\bar{x}+\hat{\alpha}\bar{d}\right)^{\top}\bar{d}+\rho\bar{\mathcal{F}}\left(\bar{x},\bar{d}\right)e\right)
       \leq 0\\
       \implies ~&~ \psi_{e}\left(\rho\bar{\mathcal{F}}\left(\bar{x},\bar{d}\right)e\right)\leq \psi_{e}\left(\nabla f^{\bar{a}_{\bar{j}}}\left(\bar{x}+\hat{\alpha}\bar{d}\right)^{\top}\bar{d}\right)\\
       \implies ~&~ \underset{j \in [\bar{\omega}]}{\max}\left\{ \psi_{e}\left(\nabla f^{\bar{a}_{\bar{j}}}\left(\bar{x}+\hat{\alpha}\bar{d}\right)^{\top}\bar{d}\right)\right\} \geq \psi_{e}\left(\rho\bar{\mathcal{F}}\left(\bar{x},\bar{d}\right)e\right)= \rho\bar{\mathcal{F}}\left(\bar{x},\bar{d}\right)\\
       \implies ~&~\bar{\mathcal{F}}\left(\bar{x}+\hat{\alpha}\bar{d},\bar{d}\right)\geq \rho\bar{\mathcal{F}}\left(\bar{x},\bar{d}\right). 
       \end{align*}
     From Lemma \ref{pro_Ger.} (iii), $\psi_e$ is a continuous function. So, $\bar{\mathcal{F}}(\bar{x}+\alpha\bar{d},\bar{d})$ is also a continuous function of $\alpha$. Hence, by the intermediate value property, there exists $\alpha^{*}\in (0,\bar{\alpha})$ such that 
     $$\bar{\mathcal{F}}\left(\bar{x}+{\alpha}^{*}\bar{d},\bar{d}\right)= \rho\bar{\mathcal{F}}\left(\bar{x},\bar{d}\right).$$
     Since $\bar{\mathcal{F}}\left(\bar{x},\bar{d}\right)<0$ and $\sigma\in (\rho,1)$, we obtain
     \begin{align}\label{eq_s&st}
       \sigma \bar{\mathcal{F}}\left(\bar{x},\bar{d}\right)<\bar{\mathcal{F}}\left(\bar{x}+{\alpha}^{*}\bar{d},\bar{d}\right)<0.  
     \end{align}
 Since $\bar{\mathcal{F}}(\bar{x}+\alpha\bar{d},\bar{d})$ is a continuous function, there exists a neighbourhood $I\subset I_{0}$ of $\alpha^{*}$ such that
 \begin{align}\label{use_Lk}
     \sigma \bar{\mathcal{F}}\left(\bar{x},\bar{d}\right)<\bar{\mathcal{F}}\left(\bar{x}+{\alpha}\bar{d},\bar{d}\right)<0  \text{ for all } \alpha\in I,
 \end{align}
 which concludes the proof.  
\end{proof}

\begin{remark}\label{existence_of_std_wolfe}
    Under the hypothesis of \emph{Theorem \ref{exi.step}}, we get a guarantee for the existence of an interval $I \subset \mathbb{R}^{+}$ in which the standard Wolfe conditions \eqref{Arm with stan. wol.} hold because for a given $K$-descent direction $\bar d$ of $F$ at $\bar x$, we have $\bar{\mathcal{F}}(\bar x, \bar d)<0$. Thus, \eqref{strong. wol.} implies that for all $\alpha \in I$, 
    \begin{align*} 
   ~ & ~ \left\lvert \bar{\mathcal{F}}(\bar{x}+\alpha\bar{d},\bar{d}) \right\rvert \leq - \sigma \bar{\mathcal{F}}(\bar{x},\bar{d}) \\ 
    \implies  & ~ \sigma \bar{\mathcal{F}}(\bar{x},\bar{d}) \le \bar{\mathcal{F}}(\bar{x}+\alpha\bar{d},\bar{d}), 
    \end{align*}
    which is the condition \eqref{stan, wol.}. \\ 
\end{remark}

For the rest of the analysis on the convergence of any sequence generated by Algorithm \ref{algo}, we take the following often-used assumptions.\\

\begin{framed}

\begin{assumption}\label{Ass. 3}
 The level set $\mathcal{L}:=\left\{x \in \mathbb{R}^{n}: F(x)\preceq^{\ell}F(x_{0})\right\}$ is bounded.   \\
\end{assumption}

    \begin{assumption}\label{lip_ass}
    The functions $\nabla f^{i}$, $i = 1, 2, \ldots, p$, are Lipschitz continuous on a nonempty open set $\Gamma$ containing $\mathcal{L}$ with a common Lipschitz constant $L>0$, i.e.,
    \[\forall~ i \in [p]  \text{ and } y,z \in \Gamma: ~\|\nabla f^{i}(y)-\nabla f^{i}(z)\|\leq L\|y-z\|.\]
\end{assumption}

\begin{assumption}\label{bddbe_ass}
    For any sequence $\{S_{k}\} \subset F(\mathcal{L})$ with $$S_{k+1}\preceq^{\ell}S_{k} \text{ for all } k = 0, 1, 2, \ldots,$$ there exists a bounded set $S\subseteq \mathbb{R}^{m}$ such that $S\preceq^{\ell}S_{k}$ for all $k = 0, 1, 2, \ldots$.\\
\end{assumption}

\end{framed}

\smallskip
\smallskip

It is worth highlighting that these three assumptions are natural extensions of those applied in conventional or vector optimization problems. Under Assumptions \ref{lip_ass} and \ref{bddbe_ass}, we show below that the general method \eqref{line.str} fulfills a condition akin to the conventional Zoutendijk’s criterion if the standard Wolfe conditions \eqref{Arm with stan. wol.} are employed for choosing the step-length $\alpha_k$ at the iterate $x_k$ for all $k = 0, 1, 2, \ldots$. This result on Zoutendijk-like condition is used later to show the global convergence of the proposed conjugate gradient methods. \\

\begin{theorem}\label{the. zoun.}
Suppose that \emph{Assumptions \ref{lip_ass} and \ref{bddbe_ass}} hold. Furthermore, assume that for the iterative scheme \eqref{line.str}, $d_{k}$ is a $K$-descent direction of $F$ at $x_{k}$, and $\alpha_{k}$ satisfies the standard Wolfe conditions \eqref{Arm with stan. wol.} for all $k = 0, 1, 2, \ldots$. Then,
    \begin{align}\label{Zounten.}
        \sum_{k\geq 0}{\frac{{\mathcal{F}^{k}}^{2}(x_{k},d_{k})}{\|d_{k}\|^{2}}}< +\infty .
    \end{align}
\end{theorem}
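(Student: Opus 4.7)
The plan is to mirror the classical Zoutendijk proof by constructing a scalar potential on $\mathbb{R}^n$ that the Armijo part of the Wolfe conditions monotonically decreases, and then feeding the summability of its decrements into a step-length lower bound obtained from the curvature condition. A natural choice is $\Phi(x):=\min_{i\in[p]}\psi_e(f^i(x))$, because the $\preceq_K$-monotonicity of $\psi_e$ in Lemma \ref{pro_Ger.}(ii), together with Proposition \ref{min+A}, forces this minimum to be attained on the minimal-index set $I(x)$. First I would unfold \eqref{Arm in Stand.}: by the definition of $\preceq^\ell$, for each $j\in[\omega_k]$ there exists $i_j\in[p]$ with $f^{i_j}(x_{k+1})\preceq_K f^{a_k^j}(x_k)+\rho\alpha_k\mathcal{F}^k(x_k,d_k)e$. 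Applying $\psi_e$ and invoking Lemma \ref{pro_Ger.}(i)--(ii) with $\psi_e(e)=1$ yields $\psi_e(f^{i_j}(x_{k+1}))\le\psi_e(f^{a_k^j}(x_k))+\rho\alpha_k\mathcal{F}^k(x_k,d_k)$. Bounding the left side by $\Phi(x_{k+1})$ and minimising the right side over $j\in[\omega_k]$, which enumerates $\mathrm{Min}(F(x_k),K)$, produces the scalar decrement $\Phi(x_{k+1})\le\Phi(x_k)+\rho\alpha_k\mathcal{F}^k(x_k,d_k)$.

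Next, the Armijo condition also implies $F(x_{k+1})\preceq^\ell F(x_k)$, so $\{F(x_k)\}$ is $\preceq^\ell$-decreasing inside $F(\mathcal{L})$; Assumption \ref{bddbe_ass} then furnishes a bounded $S\subset\mathbb{R}^m$ with $F(x_k)\subseteq S+K$ for every $k$. Because $\psi_e$ is Lipschitz by Lemma \ref{pro_Ger.}(iii) and monotone with respect to $\preceq_K$, this uniformly gives $\Phi(x_k)\ge\inf_{s\in S}\psi_e(s)>-\infty$. Telescoping the scalar decrement and using $\mathcal{F}^k(x_k,d_k)<0$ therefore produces $\sum_{k\ge 0}\alpha_k\lvert\mathcal{F}^k(x_k,d_k)\rvert<+\infty$.

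The final ingredient is a lower bound on $\alpha_k$ extracted from \eqref{stan, wol.}. Rewriting this curvature condition as $(1-\sigma)\lvert\mathcal{F}^k(x_k,d_k)\rvert\le\mathcal{F}^k(x_{k+1},d_k)-\mathcal{F}^k(x_k,d_k)$, applying the elementary bound $\lvert\max_j h_j-\max_j g_j\rvert\le\max_j\lvert h_j-g_j\rvert$ to the defining expression $\mathcal{F}^k(x,d)=\max_j\psi_e(\nabla f^{a_k^j}(x)^\top d)$, and then using the Lipschitz constant $L_\psi$ of $\psi_e$ together with the common constant $L$ from Assumption \ref{lip_ass}, I would upper-bound the right-hand side by $L_\psi L\alpha_k\|d_k\|^2$. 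This yields $\alpha_k\ge\tfrac{(1-\sigma)}{L_\psi L}\cdot\tfrac{\lvert\mathcal{F}^k(x_k,d_k)\rvert}{\|d_k\|^2}$; multiplying both sides by $\lvert\mathcal{F}^k(x_k,d_k)\rvert$ and inserting the resulting inequality into the summability established above delivers the desired bound $\sum_k \mathcal{F}^k(x_k,d_k)^2/\|d_k\|^2<+\infty$.

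The main obstacle I anticipate is the first step, namely justifying that the right-hand minimum $\min_{j\in[\omega_k]}\psi_e(f^{a_k^j}(x_k))$ really does collapse to $\Phi(x_k)$: this relies crucially on $\{f^{a_k^j}(x_k)\}_{j\in[\omega_k]}$ enumerating exactly $\mathrm{Min}(F(x_k),K)$ together with the domination property, which ensure that any $\psi_e$-minimiser of $F(x_k)$ can be chosen among the minimal elements. A secondary technicality is ensuring the Lipschitz estimate is valid on the whole segment $[x_k,x_{k+1}]$ and not just at its endpoints, which I would settle by the standard observation that Assumption \ref{Ass. 3} and the $\preceq^\ell$-monotonicity of $\{F(x_k)\}$ confine all iterates to $\mathcal{L}\subset\Gamma$.
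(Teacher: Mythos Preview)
Your proposal is correct and is essentially the paper's own argument: your potential $\Phi(x)=\min_{i\in[p]}\psi_e(f^i(x))$ is exactly the composition $(\mathscr{G}\circ F)(x)$ with $\mathscr{G}(A)=\inf_{z\in A}\psi_e(z)$ that the paper introduces, your telescoping via Assumption~\ref{bddbe_ass} matches the paper's lower bound $\inf_{s\in S}\psi_e(s)$, and your curvature estimate via $\lvert\max_j h_j-\max_j g_j\rvert\le\max_j\lvert h_j-g_j\rvert$ is the same bound the paper obtains by fixing the argmax index $j_k$. One small imprecision: to get $\psi_e\bigl(\rho\alpha_k\mathcal{F}^k(x_k,d_k)e\bigr)=\rho\alpha_k\mathcal{F}^k(x_k,d_k)$ with a \emph{negative} scalar you need $\psi_e(-e)=-1$ (or the translation identity $\psi_e(y+te)=\psi_e(y)+t$), not merely $\psi_e(e)=1$; and your ``secondary technicality'' about the segment $[x_k,x_{k+1}]$ is unnecessary, since Assumption~\ref{lip_ass} gives the Lipschitz bound directly for any two points of $\Gamma\supset\mathcal{L}$.
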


\begin{proof}
    From the standard Wolfe condition \eqref{stan, wol.} and the Lipschitz continuity of $\psi_{e}$, we have
    \begin{align*}
        (\sigma-1)\mathcal{F}^{k}(x_{k},d_{k})~&\leq~ \mathcal{F}^{k}(x_{k+1},d_{k})-\mathcal{F}^{k}(x_{k},d_{k})\\
        ~& =~\underset{j\in [\omega_{k}]}{\max}\left\{\psi_{e}\left(\nabla f^{a_{k,j}}(x_{k+1})^{\top}d_{k}\right)\right\}-\underset{j\in [\omega_{k}]}{\max}\left\{\psi_{e}\left(\nabla f^{a_{k,j}}(x_{k})^{\top}d_{k}\right)\right\}\\
        ~&\leq~ \psi_{e}\left(\nabla f^{a_{k,j_{k}}}(x_{k+1})^{\top}d_{k}\right)-\psi_{e}\left(\nabla f^{a_{k,j_{k}}}(x_{k})^{\top}d_{k}\right),\\
       &~~~~~ \text{ where } j_{k} \in \underset{j\in [\omega_{k}]}{\argmax}\left\{\nabla f^{a_{k,j}}(x_{k+1})^{\top}d_{k}\right\}\\
        ~&\leq~ \bar{L}\|\nabla f^{a_{k,j_{k}}}(x_{k+1})-\nabla f^{a_{k,j_{k}}}(x_{k})\|\|d_{k}\|,\\
        &~~~~~ \text{ where } \bar{L} \text{ is a Lipschitz constant of }\psi_{e}\\
        ~&\leq~ \bar{L}L\alpha_{k}\|d_{k}\|^{2} \text{ by Assumption \ref{lip_ass}}.
    \end{align*}
Thus, we obtain
\begin{align}\label{ineq.for.zau}
        \tfrac{{\mathcal{F}^{k}}^{2}(x_{k},d_{k})}{\|d_{k}\|^{2}}\leq \hat{L}\alpha_{k}\tfrac{\mathcal{F}^{k}(x_{k},d_{k})}{\sigma-1}, \text{ where }\hat{L}=\bar{L}L.
    \end{align}
    Consider a function $\mathscr{G}:\mathscr{P}(\mathbb{R}^{m})\rightarrow \mathbb{R}\cup\{-\infty\}$ defined by 
    $$\mathscr{G}(A)=\underset{z\in A}{\inf}~\psi_{e}(z), \text{ where } A\in \mathscr{P}(\mathbb{R}^{m}). $$
    Notice that $\psi_{e}$ has a monotonicity property. Therefore, corresponding to the preorder $\preceq^{\ell}$, the function $\mathscr{G}$ is monotone, i.e.,
    $$A_{1}\preceq^{\ell} A_{2} \implies \mathscr{G}(A_{1})\leq \mathscr{G}(A_{2}),\text{ where }A_{1}, A_{2}\in \mathscr{P}(\mathbb{R}^{m}).$$
    By \eqref{Arm in Stand.}, we have 
     \begin{align*}
       \forall~ \alpha_{k}\in (0,\bar{\alpha}_k] \text{ and } k \ge 0: F(x_{k}+\alpha_{k} d_{k})\preceq^{{\ell}}\left\{f^{a_{k,j}}(x_{k})+\rho \alpha_{k}\mathcal{F}^{k}(x_{k},d_{k})e\right\}_{j\in [\omega_{k}]}.
    \end{align*}
    As the function $\mathscr{G}$ is monotonic, 
    \begin{align*}
     \left(\mathscr{G}\circ F\right)(x_{k+1})~&\leq~ \underset{j\in [\omega_{k}]}{\min}\left\{\psi_{e}\left(f^{a_{k,j}}(x_{k})+\rho \alpha_{k}\mathcal{F}^{k}(x_{k},d_{k})e\right)\right\} \\
     ~&\leq~ \underset{j\in [\omega_{k}]}{\min}\left\{\psi_{e}\left(f^{a_{k,j}}(x_{k})\right)-\rho \alpha_{k}\mathcal{F}^{k}(x_{k},d_{k})\psi_{e}(-e)\right\}\\
     ~& =~ \underset{j\in [\omega_{k}]}{\min}\left\{\psi_{e}\left(f^{a_{k,j}}(x_{k})\right)+\rho \alpha_{k}\mathcal{F}^{k}(x_{k},d_{k})\right\}\\
     ~&=~ \left(\mathscr{G}\circ F\right)(x_{k})+\rho \alpha_{k}\mathcal{F}^{k}(x_{k},d_{k}).
    \end{align*}
    Applying this relation repeatedly, we get
    \begin{align*}
   ~&~\left(\mathscr{G}\circ F\right)(x_{k+1})-\left(\mathscr{G}\circ F\right)(x_{0})\leq \rho \sum_{i=0}^{k}\alpha_{i}~\mathcal{F}^{i}(x_{i},d_{i})\\
  \implies ~&~ \mathscr{C}\leq \rho \sum_{i=0}^{k}\alpha_{i}~\mathcal{F}^{i}(x_{i},d_{i})\leq 0, \text{ where }\mathscr{C}=\underset{s\in S}{\inf}\psi_e(s)-\left(\mathscr{G}\circ F\right)(x_{0})\\
 \implies  ~&~ \tfrac{\mathscr{C}}{\sigma-1}\geq \tfrac{\rho}{\sigma-1} \sum_{i=0}^{k}\alpha_{i}~\mathcal{F}^{i}(x_{i},d_{i})\geq 0.
    \end{align*}
    Therefore, it follows that $\sum_{k\geq 0}\alpha_{k}\frac{\mathcal{F}^{k}(x_{k},d_{k})}{\sigma-1}< +\infty,$ 
    and by the relation \eqref{ineq.for.zau}, we get 
    $$\sum_{k\geq 0}\frac{{\mathcal{F}^{k}}^{2}(x_{k},d_{k})}{\|d_{k}\|^{2}}< +\infty.$$
\end{proof}

\section{A general nonlinear conjugate gradient method}\label{section4}
In this section, we present a general nonlinear conjugate gradient scheme (Algorithm \ref{algo}) for the set optimization problem \eqref{SPL}. Depending on different ways of choosing $\beta_k$ in Step 4 of Algorithm \ref{algo}, we get different special schemes of the nonlinear conjugate gradient method for \eqref{SPL}. We explore later, in Section \ref{three_special_methods}, three tactful choices of $\beta_k$.

\begin{algorithm}[h!] 
\caption{A general nonlinear conjugate gradient method for the set optimization problem \eqref{SPL}}\label{algo}
\textbf{Step 0}. Choose an arbitrary initial point $x_{0}\in\mathbb{R}^{n}$, and the parameters $\rho\in (0,1)$ and $\sigma\in (\rho,1)$.  Provide a precision scalar $\varepsilon>0$. 
Set the iteration counter $k := 0.$ \\

\textbf{Step 1}. Find $$M_k:=\mathrm{Min}(F(x_k),K),~\omega_{k} := \lvert \mathrm{Min}(F(x_k),K)\rvert,\text{ and }P_{k}:=P_{x_{k}}.$$

\textbf{Step 2}. Compute
$$(a_{k},u_{k})\in \underset{(a,u)\in P_{k}\times\mathbb{R}^{n}}{\mathrm{argmin}} \left(\underset{j\in [\omega_{k}]}{\max}{\left\{\psi_{e}\left(\nabla f^{a_{j}}(x_{k})^{\top}u\right)\right\}}+ \tfrac{1}{2}\|u\|^{2}\right).$$

\textbf{Step 3}. If $\|u_{k}\| < \varepsilon$, then stop. Otherwise, go to Step 4.\\

\textbf{Step 4}. Find \begin{equation}\label{def_dk}
d_{k}:=\left\{\begin{aligned}
& u_{k}
& & \text{ if } k=0 \text{ or }\left\lvert\mathcal{F}^{k-1}(x_{k},d_{k-1})\right\rvert< \mathcal{F}^{k}(x_{k},d_{k-1}),\\
& u_{k}+\beta_{k}d_{k-1}
& & \text{ otherwise}, \\
\end{aligned}\right.   
\end{equation}
where $\beta_{k} $ is a given parameter.\\

\textbf{Step 5}. Find a step length $\alpha_{k}>0$ such that
\begin{align*}
   & \left\{\begin{aligned}
& f^{a_{j}}(x_{k}+\alpha_{k} d_{k})\preceq_{K}f^{a_{j}}(x_{k})+\rho \alpha_{k} \mathcal{F}^{k}(x_{k},d_{k})e ~\text{ for all } j\in [\omega_{k}]\\
& \mathcal{F}^{k}(x_{k}+\alpha_{k}d_{k},d_{k}) \geq \sigma\mathcal{F}^{k}(x_{k},d_{k}),\\
\end{aligned}\right.\\
 \noalign{\text{or} }\\
   & \left\{\begin{aligned}
& f^{a_{j}}(x_{k}+\alpha_{k} d_{k})\preceq_{K}f^{a_{j}}(x_{k})+\rho \alpha_{k} \mathcal{F}^{k}(x_{k},d_{k})e ~\text{ for all }~  j\in [\omega_{k}]\\
& \left\lvert \mathcal{F}^{k}(x_{k})+\alpha_{k}d_{k},d_{k})\right\rvert \leq \sigma|\mathcal{F}^{k}(x_{k},d_{k})|.\\
\end{aligned}\right. 
\end{align*}
   Set $x_{k+1} := x_{k}+\alpha_{k}d_{k},~ k\leftarrow k+1,$ and go to Step 1.
\end{algorithm}

\subsection{Well-definedness of Algorithm \ref{algo}}
The well-definedness of Algorithm \ref{algo} depends only on Steps 2 and 5. \\ 

For any iterate \( x_{k} \in \mathbb{R}^n \), Remark \ref{ex_a_u} (i) ensures the existence of an \( (a_{k}, u_{k}) \) for Step 2. So, Step 2 is well-defined. However, notice that the choice of $(a_{k},u_{k})$, in \emph{Step 2}, may not be unique. Even if there are multiple $(a_{k},u_{k})$'s, we are free to choose any $(a_{k},u_{k})$ because irrespective of which $(a_{k},u_{k})$ is chosen in Step 2, the corresponding $u_k$ is a $K$-descent direction of $F$ at $x_k$. During the proof of the global convergence of Algorithm \ref{algo} (Theorem ), we will see that we just require that $d_k$ is a $K$-descent direction.  \\

For Step 5 of Algorithm \ref{algo}, in Theorem \ref{exi.step}, we have proved a guarantee of the existence of a step length \( \alpha_{k} \) that satisfies the standard or the strong Wolfe conditions along a $K$-descent direction \( d_{k} \). So, Algorithm \ref{algo} is well-defined provided at $x_k$ the direction $d_k$ in Step 4 is a $K$-descent direction of the set-valued objective function $F$. In the next lemma, we provide some general choices for the parameter $\beta_k$ in Step 4 of Algorithm \ref{algo} so that $d_k$, chosen by the formula \eqref{def_dk}, is a $K$-descent direction of the objective function of the problem \eqref{SPL}. \\

\begin{proposition}\label{Pro_Bk_des_suf_des}

Suppose $\{x_k\}$ is a sequence of nonstationary points generated by \emph{Algorithm \ref{algo}} and for all $k = 0, 1, 2, \ldots,$ the value of $\beta_{k}$ in \emph{Step 4} of \emph{Algorithm \ref{algo}} is chosen by the rule 
\begin{align}\label{lem1.21}
        \beta_{k}\in \left \{\begin{aligned}
            & [0,\infty) & \text{ if }~ \mathcal{F}^{k}(x_{k},d_{k-1})\leq 0,\\
            & \left[0, -\tfrac{\mathcal{F}^{k}(x_{k},u_{k})}{\mathcal{F}^{k}(x_{k},d_{k-1})}\right] & \text{ if }~ \mathcal{F}^{k}(x_{k},d_{k-1})> 0,
        \end{aligned}\right.
    \end{align}
    or 
    \begin{align}\label{lem1.22}
    \beta_{k}\in \left \{\begin{aligned}
            & [0,\infty) & \text{ if }\mathcal{F}^{k}(x_{k},d_{k-1})\leq 0,\\
            & \left[0, -\mu \tfrac{\mathcal{F}^{k}(x_{k},u_{k})}{\mathcal{F}^{k}(x_{k},d_{k-1})}\right] & \text{ if }\mathcal{F}^{k}(x_{k},d_{k-1})> 0,
        \end{aligned}\right.          
    \end{align}
    where $\mu \in (0,1)$. If $\beta_{k}$ is chosen by \eqref{lem1.21},  then $d_{k}$ is a $K$-descent direction of $F$ at $x_k$. Moreover, if $\beta_{k}$ is chosen by the rule \eqref{lem1.22}, then the direction $d_{k}$ satisfies the sufficient descent condition \eqref{suf_d} with $c=1-\mu$.
\end{proposition}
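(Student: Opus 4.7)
The plan is to exploit the sublinearity of the function $\mathcal{F}^k(x_k, \cdot) : \mathbb{R}^n \to \mathbb{R}$, which is inherited from the sublinearity of the Gerstewitz functional $\psi_e$ recorded in Lemma \ref{pro_Ger.}(i) together with the fact that a finite pointwise maximum of sublinear functions is sublinear. Concretely, one first verifies that for all $u, v \in \mathbb{R}^n$ and $\lambda \geq 0$,
$\mathcal{F}^k(x_k, u+v) \leq \mathcal{F}^k(x_k, u) + \mathcal{F}^k(x_k, v)$ and $\mathcal{F}^k(x_k, \lambda u) = \lambda\, \mathcal{F}^k(x_k, u)$, by composing $\psi_e$ with the linear maps $d \mapsto \nabla f^{a_{k,j}}(x_k)^{\top} d$ and taking the max over $j \in [\omega_k]$.

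First, I would dispose of the trivial branch of \eqref{def_dk} in which $d_k = u_k$ (i.e., $k = 0$ or the reset condition $|\mathcal{F}^{k-1}(x_k, d_{k-1})| < \mathcal{F}^k(x_k, d_{k-1})$ triggers). Since $x_k$ is nonstationary, the discussion following Definition \ref{Definition of K_descent} gives $\mathcal{F}^k(x_k, u_k) \leq -\tfrac{1}{2}\|u_k\|^2 < 0$, so $d_k = u_k$ is immediately a $K$-descent direction; moreover, the sufficient descent bound $\mathcal{F}^k(x_k, d_k) \leq (1-\mu)\mathcal{F}^k(x_k, u_k)$ also holds trivially, because $1-\mu \in (0,1)$ and $\mathcal{F}^k(x_k, u_k) < 0$ together imply $\mathcal{F}^k(x_k, u_k) \leq (1-\mu)\mathcal{F}^k(x_k, u_k)$.

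In the principal branch $d_k = u_k + \beta_k d_{k-1}$ with $\beta_k \geq 0$, sublinearity yields
\[
\mathcal{F}^k(x_k, d_k) \leq \mathcal{F}^k(x_k, u_k) + \beta_k\, \mathcal{F}^k(x_k, d_{k-1}),
\]
and I would then split on the sign of $\mathcal{F}^k(x_k, d_{k-1})$. If $\mathcal{F}^k(x_k, d_{k-1}) \leq 0$, the second summand is non-positive for every admissible $\beta_k \geq 0$, so $\mathcal{F}^k(x_k, d_k) \leq \mathcal{F}^k(x_k, u_k) < 0$; this already delivers both $K$-descent and sufficient descent, since $\mathcal{F}^k(x_k, u_k) \leq (1-\mu)\mathcal{F}^k(x_k, u_k)$. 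If $\mathcal{F}^k(x_k, d_{k-1}) > 0$, I would substitute the upper bound from \eqref{lem1.21}, namely $\beta_k \leq -\mathcal{F}^k(x_k, u_k)/\mathcal{F}^k(x_k, d_{k-1})$, to obtain $\beta_k\, \mathcal{F}^k(x_k, d_{k-1}) \leq -\mathcal{F}^k(x_k, u_k)$ and thus $\mathcal{F}^k(x_k, d_k) \leq 0$, which is the $K$-descent property. For rule \eqref{lem1.22}, the tighter bound $\beta_k \leq -\mu\, \mathcal{F}^k(x_k, u_k)/\mathcal{F}^k(x_k, d_{k-1})$ gives $\beta_k\, \mathcal{F}^k(x_k, d_{k-1}) \leq -\mu\, \mathcal{F}^k(x_k, u_k)$, whence $\mathcal{F}^k(x_k, d_k) \leq (1-\mu)\,\mathcal{F}^k(x_k, u_k)$, which is precisely \eqref{suf_d} with $c = 1 - \mu$.

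The main substantive step is the transfer of sublinearity from $\psi_e$ through the composition with the linear gradient maps and the outer maximum; after that, the case analysis is a one-line calculation in each branch, driven entirely by the prescribed upper bound on $\beta_k$. I anticipate no serious obstacle beyond being careful that the inequality $\mathcal{F}^k(x_k, u_k) < 0$ (strict, from nonstationarity) is used to upgrade non-strict bounds to strict ones where needed.
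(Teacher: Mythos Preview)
Your proposal is correct and follows essentially the same route as the paper: both arguments use the sublinearity of $\psi_e$ (Lemma~\ref{pro_Ger.}(i)) to derive the key inequality $\mathcal{F}^{k}(x_{k},d_{k})\leq \mathcal{F}^{k}(x_{k},u_{k})+\beta_{k}\mathcal{F}^{k}(x_{k},d_{k-1})$, then split on the sign of $\mathcal{F}^{k}(x_{k},d_{k-1})$ and plug in the upper bound on $\beta_k$. One small caution: for rule~\eqref{lem1.21} at the endpoint $\beta_k=-\mathcal{F}^{k}(x_{k},u_{k})/\mathcal{F}^{k}(x_{k},d_{k-1})$ your argument yields only $\mathcal{F}^{k}(x_{k},d_{k})\leq 0$, not the strict inequality required by~\eqref{descent}; this boundary subtlety is equally present in the paper's sketched ``similarly proved'' claim for~\eqref{lem1.21}, so it is not a defect of your approach relative to the paper.
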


\begin{proof}
We prove the result for the case of \eqref{lem1.22}. The case of \eqref{lem1.21} can be similarly proved. \\ \\ 
Let $\beta_k$ be chosen by the rule \eqref{lem1.22}. Then, clearly 
for $k=0$, as $d_{0} = u_{0}$, $d_{k}$ satisfies the sufficient descent condition \eqref{suf_d} since 
$$\mathcal{F}^{k}(x_{k},d_{k})=\mathcal{F}^{k}(x_{k},u_{k})\leq (1-\mu)\mathcal{F}^{k}(x_{k},u_{k}).$$

\noindent Next, assume that $k\geq 1$. From the definition of $d_{k}$ and Lemma \ref{pro_Ger.} (ii), we obtain
\begin{equation}\label{lem1.24}
\psi_{e}\left(\nabla f^{a_{k,j}}(x_{k})^{\top}d_{k}\right)\leq \psi_{e}\left(\nabla f^{a_{k,j}}(x_{k})^{\top}u_{k}\right)+\beta_{k}\psi_{e}\left(\nabla f^{a_{k,j}}(x_{k})^{\top}d_{k-1}\right) \text{ for all } j\in [\omega_{k}]
    \end{equation}
because $\beta_{k}\geq 0$. Thus, for the element $a_{k} \in P_{k}$ at $x_{k}$, by using the relation \eqref{lem1.24}, we get 
\begin{equation}\label{lem1.23}
    \mathcal{F}^{k}(x_{k},d_{k})\leq \mathcal{F}^{k}(x_{k},u_{k})+\beta_{k}\mathcal{F}^{k}(x_{k},d_{k-1}).
\end{equation}
Since $\mathcal{F}^{k}(x_{k},u_{k})< 0$, if $\mathcal{F}^{k}(x_{k},d_{k-1})\leq 0$, then from \eqref{lem1.23} we obtain 
\begin{equation*}
    \mathcal{F}^{k}(x_{k},d_{k})\leq \mathcal{F}^{k}(x_{k},u_{k})\leq (1-\mu)\mathcal{F}^{k}(x_{k},u_{k}).
\end{equation*}
Now, suppose that $\mathcal{F}^{k}(x_{k},d_{k-1})> 0$ and $\beta_{k}\in \left[0, -\mu \tfrac{\mathcal{F}^{k}(x_{k},u_{k})}{\mathcal{F}^{k}(x_{k},d_{k-1})}\right]$. Then,
\begin{equation*}
 \mathcal{F}^{k}(x_{k},d_{k})\leq \mathcal{F}^{k}(x_{k},u_{k})-\mu \mathcal{F}^{k}(x_{k},u_{k})=(1-\mu)\mathcal{F}^{k}(x_{k},u_{k}).   
\end{equation*}
Thus, $d_{k}$ satisfies the sufficient descent condition \eqref{suf_d} with $c=1-\mu$ when $\beta_{k}$ is chosen by the rule \eqref{lem1.22}. 
\end{proof}

\begin{remark}\label{Remark on Algo.}
In \emph{Step 4} of \emph{Algorithm \ref{algo}}, note that we use a \emph{regular restart}   
$d_k = u_k$ when $\left\lvert\mathcal{F}^{k-1}(x_{k},d_{k-1})\right\rvert< \mathcal{F}^{k}(x_{k},d_{k-1})$.  
The reason behind using this condition for the regular restart $d_k = u_k$ is that  although under the condition $\left\lvert\mathcal{F}^{k-1}(x_{k},d_{k-1})\right\rvert \ge \mathcal{F}^{k}(x_{k},d_{k-1})$ the direction $u_k + \beta_k d_{k - 1}$ is a $K$-descent direction of $F$ at $x_k$ $($see the lines of the proof of \emph{Theorem \ref{Conv_Algo}}$)$, we could not find any guarantee of $K$-decency of $u_k + \beta_k d_{k - 1}$ if $\left\lvert\mathcal{F}^{k-1}(x_{k},d_{k-1})\right\rvert \ge \mathcal{F}^{k}(x_{k},d_{k-1})$. So, we have used the known descent direction $u_k$ as $d_k$ when $\left\lvert\mathcal{F}^{k-1}(x_{k},d_{k-1})\right\rvert < \mathcal{F}^{k}(x_{k},d_{k-1})$. It is noteworthy that in conjugate direction methods for vector optimization problems \emph{\cite{lucambio2018nonlinear}}, this regular restart condition is not required because for all $k=0,1,\ldots$, $\mathcal{F}^{k-1}(x_k,d)=\mathcal{F}^{k}(x_k,d)$ for any $d \in \mathbb{R}^n$. 
\end{remark}

\subsection{Convergence analysis}\label{Conver_Analysis}
In this section, we examine the global convergence property of Algorithm \ref{algo}. It is important to note that the following global convergence result holds for any choice of the conjugate gradient parameter $\beta_k \ge 0$ for all $k = 0, 1, 2, \ldots$.  \\

\begin{theorem}\label{Conv_Algo}
    Let \emph{Assumptions \ref{lip_ass}} and \emph{\ref{bddbe_ass}} hold  and $\{x_k\}$ be a sequence of nonstationary points generated by \emph{Algorithm \ref{algo}}. Furthermore, assume that 
    
    \begin{equation}\label{the1.3e6}
        \sum_{k\geq 0}{\frac{1}{\|d_{k}\|^{2}}}=+\infty.
    \end{equation}
    
    \begin{enumerate}[label=(\roman*)]
        \item If $\alpha_{k}$ satisfies the standard Wolfe conditions \eqref{Arm with stan. wol.} and $d_{k}$ satisfies the sufficient descent condition \eqref{suf_d} at $x_{k}$ for all $k = 0, 1, 2, \ldots$, then $\underset{k\rightarrow \infty}{\liminf}~\|u_{k}\|=0$.

        \item If $\alpha_{k}$ satisfies the strong Wolfe conditions \eqref{Arm with strong. wol.}, $\beta_{k}\geq 0$, and $d_{k}$ is a $K$-descent direction of $F$ at $x_{k}$ for all $k = 0, 1, 2, \ldots$, then $\underset{k\rightarrow \infty}{\liminf}~\|u_{k}\|=0$.
    \end{enumerate}
\end{theorem}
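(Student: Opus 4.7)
The plan is to argue by contradiction: assume $\liminf_{k\to\infty}\|u_k\| \geq \delta > 0$ and derive a violation of the Zoutendijk-like inequality from Theorem \ref{the. zoun.} once combined with the standing hypothesis \eqref{the1.3e6}. A preliminary step I would establish first is that $\|u_k\|$ is uniformly bounded above. Since $(a_k,u_k)$ minimises $\varphi_{x_k}$ and $\varphi_{x_k}(a_k,0)=0$, one has $\tfrac{1}{2}\|u_k\|^2 \leq -\mathcal{F}^k(x_k,u_k)$; combining with the Lipschitz continuity of $\psi_e$ from Lemma \ref{pro_Ger.}(iii), Assumption \ref{lip_ass}, and the boundedness of $\mathcal{L}$ (Assumption \ref{Ass. 3}), this yields a uniform bound $\|u_k\|\leq M$.

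For part (i), the sufficient descent condition \eqref{suf_d} together with $\mathcal{F}^k(x_k,u_k)\leq -\tfrac{1}{2}\|u_k\|^2$ produces
\[
\mathcal{F}^k(x_k,d_k) \;\leq\; c\,\mathcal{F}^k(x_k,u_k) \;\leq\; -\tfrac{c}{2}\|u_k\|^2 \;\leq\; -\tfrac{c\delta^2}{2}
\]
for all large $k$, so $\tfrac{(\mathcal{F}^k(x_k,d_k))^2}{\|d_k\|^2}\geq \tfrac{c^2\delta^4}{4}\cdot\tfrac{1}{\|d_k\|^2}$. Summing and invoking Theorem \ref{the. zoun.} would force $\sum_k 1/\|d_k\|^2 < +\infty$, contradicting \eqref{the1.3e6}.

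For part (ii), I would split iterations according to the branch of Step 4. Let $\mathcal{R}$ denote the set of indices on which the regular restart $d_k=u_k$ is applied (i.e.\ $|\mathcal{F}^{k-1}(x_k,d_{k-1})|<\mathcal{F}^k(x_k,d_{k-1})$). If $\mathcal{R}$ is infinite, the argument of part (i) applies verbatim along this subsequence, since on $\mathcal{R}$ one has $\mathcal{F}^k(x_k,d_k)=\mathcal{F}^k(x_k,u_k)\leq -\tfrac{1}{2}\|u_k\|^2$ and $\|d_k\|=\|u_k\|\leq M$. If instead $\mathcal{R}$ is finite, then eventually $d_k=u_k+\beta_k d_{k-1}$ with $|\mathcal{F}^{k-1}(x_k,d_{k-1})|\geq \mathcal{F}^k(x_k,d_{k-1})$. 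Subadditivity of $\psi_e$ (Lemma \ref{pro_Ger.}(i)) together with $\beta_k\geq 0$ gives
\[
\mathcal{F}^k(x_k,d_k) \;\leq\; \mathcal{F}^k(x_k,u_k) + \beta_k\,\mathcal{F}^k(x_k,d_{k-1}).
\]
When $\mathcal{F}^k(x_k,d_{k-1})\leq 0$ the second summand is non-positive and the bound of part (i) is recovered. When $\mathcal{F}^k(x_k,d_{k-1})>0$, the non-restart condition combined with the strong Wolfe inequality at step $k-1$ gives $0<\mathcal{F}^k(x_k,d_{k-1})\leq -\sigma\,\mathcal{F}^{k-1}(x_{k-1},d_{k-1})$, which I would iterate across consecutive non-restart indices, using $K$-descent of each intermediate $d_j$, to control the positive contribution by a geometric series in $\sigma<1$ and recover a uniform lower bound on $|\mathcal{F}^k(x_k,d_k)|$ of order $\|u_k\|^2$.

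The main obstacle is precisely this last sub-case of part (ii). Because $\beta_k$ is only assumed non-negative (no specific Dai--Yuan, PRP, or HS formula is available), the positive term $\beta_k\mathcal{F}^k(x_k,d_{k-1})$ in the subadditivity estimate cannot simply be discarded; it must be tamed by cascading the regular-restart condition of Step 4 and the strong Wolfe bound $|\mathcal{F}^{k-1}(x_k,d_{k-1})|\leq \sigma|\mathcal{F}^{k-1}(x_{k-1},d_{k-1})|$ across consecutive iterations, the closure of the recursion relying crucially on $\sigma<1$.
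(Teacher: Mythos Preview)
Your argument for part~(i) is correct and coincides with the paper's proof.

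For part~(ii), however, there is a genuine gap in the sub-case $\mathcal{F}^{k}(x_{k},d_{k-1})>0$. You propose to iterate the bound $\mathcal{F}^{k}(x_{k},d_{k-1})\leq -\sigma\,\mathcal{F}^{k-1}(x_{k-1},d_{k-1})$ across consecutive non-restart indices and sum a geometric series in $\sigma$ to recover a sufficient-descent-type estimate $|\mathcal{F}^{k}(x_{k},d_{k})|\gtrsim\|u_{k}\|^{2}$. This cannot work: the quantity that must be controlled in the subadditivity estimate is not $\mathcal{F}^{k}(x_{k},d_{k-1})$ but $\beta_{k}\,\mathcal{F}^{k}(x_{k},d_{k-1})$, and the only hypothesis on $\beta_{k}$ is $\beta_{k}\geq 0$. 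However small $\mathcal{F}^{k}(x_{k},d_{k-1})$ is, $\beta_{k}$ may be large enough that $\beta_{k}\mathcal{F}^{k}(x_{k},d_{k-1})$ nearly cancels $\mathcal{F}^{k}(x_{k},u_{k})$, leaving only the qualitative information $\mathcal{F}^{k}(x_{k},d_{k})<0$ from $K$-descent. Iterating to earlier indices only compounds the difficulty, since $\beta_{k-1},\beta_{k-2},\ldots$ are equally uncontrolled; a geometric factor $\sigma^{j}$ cannot absorb an unbounded product of $\beta$'s.

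The paper's route is different in kind: it does \emph{not} attempt to establish sufficient descent. Squaring the chain
\[
0\leq -\mathcal{F}^{k}(x_{k},u_{k})\leq -\mathcal{F}^{k}(x_{k},d_{k})-\sigma\beta_{k}\,\mathcal{F}^{k-1}(x_{k-1},d_{k-1})
\]
yields $c_{1}\|u_{k}\|^{4}\leq (\mathcal{F}^{k}(x_{k},d_{k}))^{2}+\tfrac{\beta_{k}^{2}}{2}(\mathcal{F}^{k-1}(x_{k-1},d_{k-1}))^{2}$, while $d_{k}=u_{k}+\beta_{k}d_{k-1}$ gives $\|d_{k}\|^{2}\geq \tfrac{\beta_{k}^{2}}{2}\|d_{k-1}\|^{2}-\|u_{k}\|^{2}$. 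The unknown $\beta_{k}^{2}$ appears in both inequalities with matching signs; combining them eliminates $\beta_{k}$ entirely and gives, for $k$ large,
\[
\frac{(\mathcal{F}^{k}(x_{k},d_{k}))^{2}}{\|d_{k}\|^{2}}+\frac{(\mathcal{F}^{k-1}(x_{k-1},d_{k-1}))^{2}}{\|d_{k-1}\|^{2}}\;\geq\;\frac{c_{1}\gamma^{4}}{2}\,\frac{1}{\|d_{k}\|^{2}},
\]
where the fact that Zoutendijk terms tend to zero is used to absorb a residual. This two-term coupling, not a single-index descent bound, is what produces the contradiction with \eqref{the1.3e6}.

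A minor side remark: your preliminary uniform bound $\|u_{k}\|\leq M$ invokes Assumption~\ref{Ass. 3}, which is not among the hypotheses of Theorem~\ref{Conv_Algo}; fortunately it is not needed in either part.
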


\begin{proof}
On the contrary, suppose that (i) is not true. Then, we can find a constant $\gamma>0$ such that
\begin{equation}\label{the.1.3e11}
  \|u_{k}\|\geq \gamma \text{ for all } k = 0, 1, 2, \ldots. 
\end{equation}
Notice that $x_{k}$ is a nonstationary point of \eqref{SPL}. Therefore, by Proposition \ref{Pro. station. cond.} and the sufficient descent condition \eqref{suf_d}, we have
\begin{align}\label{the1.3e1}
    & \mathcal{F}^{k}(x_{k},u_{k})+\tfrac{1}{2}\|u_{k}\|^{2} < 0, 
 \end{align}
 and for some $c>0$, 
\begin{align}\label{the1.3e2}
   \mathcal{F}^{k}(x_{k}, d_{k})\leq c~\mathcal{F}^{k}(x_{k},u_{k}) < 0.  
\end{align}

\noindent From relations \eqref{the.1.3e11}, \eqref{the1.3e1}, and \eqref{the1.3e2}, we have for all $k = 0, 1, 2, \ldots$ that 
\begin{equation*}
0< \tfrac{c^{2}\gamma^{4}}{4\|d_{k}\|^{2}}\leq \tfrac{c^{2}\|u_{k}\|^{4}}{4\|d_{k}\|^{2}}\leq \tfrac{c^{2}{\mathcal{F}^{k}}^{2}(x_{k},u_{k})}{\|d_{k}\|^{2}}\leq \tfrac{{\mathcal{F}^k}^{2}(x_{k},d_{k})}{\|d_{k}\|^{2}}. 
\end{equation*}

\noindent Thus, from \eqref{the1.3e6}, we get 
\begin{equation*}
    \sum_{k\geq 0}{\tfrac{{\mathcal{F}^{k}}^{2}(x_{k},d_{k})}{\|d_{k}\|^{2}}}= +\infty.
\end{equation*}
However, this is contradictory to Theorem \ref{the. zoun.}. Thus, the statement (i) is true. \\

For the proof of the statement (ii), similar to (i), if it is false, there exists a constant $\gamma>0$ such that the inequality \eqref{the.1.3e11} holds. \\  \\
Due to \eqref{def_dk}, for $k=0$ and $\left\lvert\mathcal{F}^{k-1}(x_{k},d_{k-1})\right\rvert < \mathcal{F}^{k}(x_{k},d_{k-1})$, we get

\[d_{k}=u_{k}\implies \mathcal{F}^{k}(x_{k},d_{k})=\mathcal{F}^{k}(x_{k},u_{k}).\]

\smallskip
\smallskip
\smallskip
\noindent In this scenario, $d_{k}$ satisfies the sufficient descent condition \eqref{suf_d}, and the step-length $\alpha_{k}$ satisfies the standard Wolfe condition \eqref{stan, wol.} as it is satisfying the strong Wolfe condition \eqref{strong. wol.}. Therefore, from statement (i), we obtain

 \[\underset{k\rightarrow \infty}{\liminf} ~ \|u_{k}\|=0.\]

\smallskip
\smallskip
\smallskip
\noindent Now, we assume that $k\geq 1$ and $\left\lvert\mathcal{F}^{k-1}(x_{k},d_{k-1})\right\rvert \ge \mathcal{F}^{k}(x_{k},d_{k-1})$. From the definition of $d_{k}$ in \eqref{def_dk}, we have
\begin{align}
   & d_{k}=u_{k}+\beta_{k}d_{k-1}\nonumber\\
    \implies &\mathcal{F}^{k}(x_{k},d_{k}) \leq \mathcal{F}^{k}(x_{k},u_{k})+\beta_{k}\mathcal{F}^{k}(x_{k},d_{k-1})\label{the1.3e3}
\end{align}
because $\beta_{k}\geq 0$. 
Since $u_{k}$ is a $K$-descent direction and $\alpha_{k}$ satisfies the strong Wolfe condition \eqref{strong. wol.}, we obtain from \eqref{the1.3e3} that 
\begin{align}
    0\leq -\mathcal{F}^{k}(x_{k},u_{k}) & \leq - \mathcal{F}^{k}(x_{k},d_{k})+\beta_{k}\mathcal{F}^{k}(x_{k},d_{k-1})\nonumber \\
    & \leq - \mathcal{F}^{k}(x_{k},d_{k}) - \sigma \beta_{k}\mathcal{F}^{k-1}(x_{k-1},d_{k-1}) \text{ as }\left\lvert\mathcal{F}^{k-1}(x_{k},d_{k-1})\right\rvert > \mathcal{F}^{k}(x_{k},d_{k-1})\nonumber\\
    \implies {\mathcal{F}^{k}}^{2}(x_{k},u_{k}) &\leq {\mathcal{F}^{k}}^{2}(x_{k},d_{k})+\sigma^{2} \beta^{2}_{k}{\mathcal{F}^{k-1}}^{2}(x_{k-1},d_{k-1})+2\sigma \beta_{k} \mathcal{F}^{k}(x_{k},d_{k}) \mathcal{F}^{k-1}(x_{k-1},d_{k-1})\nonumber\\
    & \leq (1+2\sigma^{2})\left[{\mathcal{F}^{k}}^{2}(x_{k},d_{k})+\tfrac{\beta^{2}_{k}}{2}{\mathcal{F}^{k-1}}^{2}(x_{k-1},d_{k-1})\right]\text{ from Lemma }\ref{S_calcul.}~ (ii)\nonumber\\
    \overset{\eqref{the1.3e1}}{\implies} c_{1}\|u_{k}\|^{4}  \leq ~&{\mathcal{F}^{k}}^{2}(x_{k},d_{k})+\tfrac{\beta^{2}_{k}}{2}{\mathcal{F}^{k-1}}^{2}(x_{k-1},d_{k-1}), \label{the1.3e4}
\end{align}
where $c_{1}=\tfrac{1}{(1+2\sigma^{2})}$.\\ \\ 
Using the definition of $d_{k}$  in \eqref{def_dk} and Lemma \ref{S_calcul.} (iii), we have
\begin{align}
    & -\beta_{k}d_{k-1}=-d_{k}+u_{k}\nonumber\\
    \implies&\beta^{2}_{k}\|d_{k-1}\|^{2}\leq \left[\|d_{k}\|+\|u_{k}\|\right]^{2}\leq 2\|d_{k}\|^{2}+2\|u_{k}\|^{2}\nonumber\\
    \implies &\|d_{k}\|^{2} \geq -\|u_{k}\|^{2}+\tfrac{\beta^{2}_{k}}{2}\|d_{k-1}\|^{2}.\label{the1.3e5}
\end{align}
Note that
\begin{align*}
    \tfrac{{\mathcal{F}^{k}}^{2}(x_{k},d_{k})}{\|d_{k}\|^{2}}+\tfrac{{\mathcal{F}^{k-1}}^{2}(x_{k-1},d_{k-1})}{\|d_{k-1}\|^{2}}&=\tfrac{1}{\|d_{k}\|^{2}}\left[{\mathcal{F}^{k}}^{2}(x_{k},d_{k})+\tfrac{\|d_{k}\|^{2}}{\|d_{k-1}\|^{2}}{\mathcal{F}^{k-1}}^{2}(x_{k-1},d_{k-1}) \right]\\
    & \overset{\eqref{the1.3e5}}{\geq} \tfrac{1}{\|d_{k}\|^{2}}\left[{\mathcal{F}^{k}}^{2}(x_{k},d_{k})+\left(\tfrac{\beta^{2}_{k}}{2}-\tfrac{\|u_{k}\|^{2}}{\|d_{k-1}\|^{2}}\right){\mathcal{F}^{k-1}}^{2}(x_{k-1},d_{k-1}) \right]\\
    & \overset{\eqref{the1.3e4}}{\geq} \tfrac{\|u_{k}\|^{2}}{\|d_{k}\|^{2}}\left[c_{1}\|u_{k}\|^{2}-\tfrac{{\mathcal{F}^{k-1}}^{2}(x_{k-1},d_{k-1})}{\|d_{k-1}\|^{2}}\right].
    \end{align*}
   From Theorem \ref{the. zoun.}, $\tfrac{\mathcal{F}^{k}(x_{k},d_{k})}{\|d_{k}\|}\rightarrow 0.$ Therefore,
   \begin{align*}
    \tfrac{{\mathcal{F}^{k}}^{2}(x_{k},d_{k})}{\|d_{k}\|^{2}}+\tfrac{{\mathcal{F}^{k-1}}^{2}(x_{k-1},d_{k-1})}{\|d_{k-1}\|^{2}}&\geq\tfrac{c_{1}}{2} \tfrac{\|u_{k}\|^{4}}{\|d_{k}\|^{2}} \geq\tfrac{c_{1}\gamma^{4}}{2} \tfrac{1}{\|d_{k}\|^{2}}   
\end{align*}

\noindent for sufficiently large $k$. This inequality, together with  Theorem \ref{the. zoun.}, yields $\sum_{k \ge 0} \tfrac{1}{\|d_{k}\|^{2}} < + \infty$, which is contradictory to \eqref{the1.3e6}. Thus, the result follows. 

\end{proof}

\section{Three special conjugate gradient methods}\label{three_special_methods}
In this section, we explore three special choices of the conjugate gradient parameter $\beta_k$ in Step 4 of the general nonlinear conjugate gradient Algorithm \ref{algo}. \\ 

\begin{itemize}
    \item Dai-Yuan (DY):
    \begin{align}\label{DY_paramete}
    \beta_{\mathrm{DY}_k} := \frac{-\mathcal{F}^{k}(x_k,u_{k})}{\mathcal{F}^{k-1}(x_k,d_{k-1}) - \mathcal{F}^{k-1}(x_{k-1}, d_{k-1})}.
\end{align}

\item Polak-Ribi\`{e}re-Polyak (PRP): 
\begin{align}\label{PRP_paramete}
     \beta_{\mathrm{PRP}_k} := \frac{-\mathcal{F}^{k}(x_k,u_{k})+\mathcal{F}^{k}(x_{k-1},u_{k})}{- \mathcal{F}^{k-1}(x_{k-1}, u_{k-1})}. 
\end{align}

\item Hastenes-Stiefel (HS): 
  \begin{align}\label{HS_paramete}
      \beta_{\mathrm{HS}_k} := \frac{-\mathcal{F}^{k}(x_k,u_{k})+\mathcal{F}^{k}(x_{k-1},u_{k})}{\mathcal{F}^{k-1}(x_k,d_{k-1}) - \mathcal{F}^{k-1}(x_{k-1}, d_{k-1})}.
  \end{align}
    
\end{itemize}

Next, we discuss the global convergence of Algorithm \ref{algo} for these three particular choices of $\beta_{k}$ in Step 4 of Algorithm \ref{algo}. Before analyzing the convergence with the parameter $\beta_{\mathrm{DY}_k}$, we demonstrate that the direction $d_k$ generated by Algorithm \ref{algo} with the parameter $\beta_{\mathrm{DY}_k}$ satisfies the sufficient descent condition \eqref{suf_d}. This result is crucial in the subsequent proof of global convergence.\\

\begin{lemma}\label{DY_s.D}
 Consider \emph{Algorithm \ref{algo}} with \(0 \leq \beta_k \leq \beta_{\mathrm{DY}_k}\) for all $k = 0, 1, 2, \ldots$. Assume that $\{x_{k}\}$ is a sequence of nonstationary points generated by this algorithm when $\{\alpha_{k}\}$ is chosen by the strong Wolfe conditions \eqref{Arm with strong. wol.}. Then, \(d_k\) satisfies sufficient descent condition \eqref{suf_d} with \(c = \tfrac{1}{1 + \sigma}\) for all $k = 0, 1, 2, \ldots$. 
\end{lemma}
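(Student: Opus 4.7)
The plan is to argue by induction on $k$. The base case $k=0$ is immediate because Step~4 of Algorithm~\ref{algo} forces $d_0=u_0$; hence $\mathcal{F}^0(x_0,d_0)=\mathcal{F}^0(x_0,u_0)\le \tfrac{1}{1+\sigma}\mathcal{F}^0(x_0,u_0)$, using $\mathcal{F}^0(x_0,u_0)<0$ (from Proposition~\ref{Pro. station. cond.} since $x_0$ is nonstationary) together with $\tfrac{1}{1+\sigma}\in(0,1)$. For the inductive step with $k\ge 1$, I would treat the regular-restart branch (where $d_k=u_k$) exactly as in the base case, so the interesting case is $d_k=u_k+\beta_k d_{k-1}$ with $0\le\beta_k\le\beta_{\mathrm{DY}_k}$ and $\left\lvert\mathcal{F}^{k-1}(x_k,d_{k-1})\right\rvert\ge \mathcal{F}^k(x_k,d_{k-1})$.

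In the non-restart branch, my first move is to apply the sublinearity property of $\psi_e$ from Lemma~\ref{pro_Ger.}(i) to the index attaining the maximum defining $\mathcal{F}^k(x_k,d_k)$, obtaining
\[
\mathcal{F}^k(x_k,d_k)\le \mathcal{F}^k(x_k,u_k)+\beta_k\,\mathcal{F}^k(x_k,d_{k-1}).
\]
If $\mathcal{F}^k(x_k,d_{k-1})\le 0$, the last term is nonpositive (since $\beta_k\ge 0$) and the sufficient descent condition with $c=\tfrac{1}{1+\sigma}$ follows at once from $\mathcal{F}^k(x_k,u_k)<0$. The substantive case is $\mathcal{F}^k(x_k,d_{k-1})>0$: using $\beta_k\le \beta_{\mathrm{DY}_k}$ and substituting the explicit formula \eqref{DY_paramete}, the previous bound rewrites as
\[
\mathcal{F}^k(x_k,d_k)\le \mathcal{F}^k(x_k,u_k)\cdot\frac{D-\mathcal{F}^k(x_k,d_{k-1})}{D},\quad D:=\mathcal{F}^{k-1}(x_k,d_{k-1})-\mathcal{F}^{k-1}(x_{k-1},d_{k-1}).
\]
The strong Wolfe condition \eqref{strong. wol.} at iteration $k-1$ gives $\mathcal{F}^{k-1}(x_k,d_{k-1})\ge \sigma\mathcal{F}^{k-1}(x_{k-1},d_{k-1})$, which combined with $\mathcal{F}^{k-1}(x_{k-1},d_{k-1})<0$ (inductive sufficient-descent hypothesis) yields $D\ge(1-\sigma)|\mathcal{F}^{k-1}(x_{k-1},d_{k-1})|>0$. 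Thus it suffices to verify $\sigma D\ge (1+\sigma)\,\mathcal{F}^k(x_k,d_{k-1})$, which forces the ratio above to be $\ge 1/(1+\sigma)$ and, since $\mathcal{F}^k(x_k,u_k)<0$, delivers the claimed estimate.

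The key input for that last inequality is the regular restart rule of Step~4: in the non-restart branch it imposes $\mathcal{F}^k(x_k,d_{k-1})\le \left\lvert\mathcal{F}^{k-1}(x_k,d_{k-1})\right\rvert$, and strong Wolfe then gives $\left\lvert\mathcal{F}^{k-1}(x_k,d_{k-1})\right\rvert\le -\sigma\mathcal{F}^{k-1}(x_{k-1},d_{k-1})$. Feeding these estimates into $\sigma D\ge (1+\sigma)\mathcal{F}^k(x_k,d_{k-1})$ and rearranging everything against $\left\lvert\mathcal{F}^{k-1}(x_{k-1},d_{k-1})\right\rvert$ closes the proof. I expect the main obstacle to sit exactly here, in carefully negotiating the index mismatch between $\mathcal{F}^k$ and $\mathcal{F}^{k-1}$ at the common point $x_k$: unlike the vector-optimization analysis of \cite{lucambio2018nonlinear}, where these two functions coincide and the classical Dai--Yuan identity transparently produces $c=1/(1+\sigma)$, in the set-optimization setting the partition elements $a_k$ and $a_{k-1}$ may differ, and it is precisely the regular restart condition that absorbs this discrepancy and allows the strong-Wolfe estimates to yield the clean constant $1/(1+\sigma)$.
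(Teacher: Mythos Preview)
Your overall architecture (induction on $k$, the restart branch handled as the base case, the sublinearity bound $\mathcal{F}^k(x_k,d_k)\le \mathcal{F}^k(x_k,u_k)+\beta_k\mathcal{F}^k(x_k,d_{k-1})$, the case split on the sign of $\mathcal{F}^k(x_k,d_{k-1})$, and the use of the strong Wolfe inequality to control the denominator of $\beta_{\mathrm{DY}_k}$) is exactly the paper's. The divergence is only in how the final inequality is extracted, and that is where your argument breaks.

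Write $A:=\mathcal{F}^{k-1}(x_k,d_{k-1})$, $B:=\mathcal{F}^{k-1}(x_{k-1},d_{k-1})<0$, $G:=\mathcal{F}^k(x_k,d_{k-1})>0$, and $D=A-B$. You correctly reduce the target to $\sigma D\ge(1+\sigma)G$ and record the two inputs $D\ge(1-\sigma)\lvert B\rvert$ and $G\le\lvert A\rvert\le\sigma\lvert B\rvert$. But feeding these in gives, at best, $\sigma D\ge\sigma(1-\sigma)\lvert B\rvert$ on the left and $(1+\sigma)G\le\sigma(1+\sigma)\lvert B\rvert$ on the right, and the needed comparison $\sigma(1-\sigma)\ge\sigma(1+\sigma)$ is false for every $\sigma\in(0,1)$. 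Concretely, with $\sigma=\tfrac12$, $B=-1$, $A=-\tfrac12$, $G=\tfrac12$ (all consistent with strong Wolfe and the non-restart rule), one gets $D=\tfrac12$, $\sigma D=\tfrac14$ while $(1+\sigma)G=\tfrac34$; your inequality fails and the ratio $(D-G)/D$ equals $0$, far below $1/(1+\sigma)$. So the ``rearranging'' does not close.

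The paper does \emph{not} pass through the crude lower bound $D\ge(1-\sigma)\lvert B\rvert$. Instead it keeps $D=A-B$ intact and replaces $G$ by $A$ in the numerator, obtaining
\[
\mathcal{F}^k(x_k,u_k)\,\frac{D-G}{D}\;\le\;\mathcal{F}^k(x_k,u_k)\,\frac{-B}{D}\;=\;\frac{\mathcal{F}^k(x_k,u_k)}{1-\mathcal{V}_k},\qquad \mathcal{V}_k:=\frac{A}{B}\in[-\sigma,\sigma],
\]
and then $1-\mathcal{V}_k\le 1+\sigma$ yields the constant $\tfrac{1}{1+\sigma}$ in one stroke. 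In other words, the paper uses the non-restart condition in the sharper form $A\ge G$ (so that $D-G\ge -B$), rather than your chain $G\le\lvert A\rvert\le\sigma\lvert B\rvert$ combined with a separate lower bound on $D$. If you want your route to succeed you must likewise keep the exact $D=A-B$ and argue $A\ge G$ directly; the two bounds you listed are individually too lossy to recombine into $\sigma D\ge(1+\sigma)G$.
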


\begin{proof}
 We prove this lemma by the method of induction. \\ \\  
Due to the definition  \eqref{def_dk} of $d_{k}$, for $k = 0$, $d_0 = u_{0}$. Since $\mathcal{F}_{a_0}(x_0, u_{0}) < 0$ and $0 < \sigma < 1$, we have 
\[\mathcal{F}_{a_{0}}(x_{0}, d_{0})\leq \tfrac{1}{1+\sigma}\mathcal{F}_{a_{0}}(x_{0},u_{0}),\]
i.e.,  \eqref{suf_d} holds with $c = \tfrac{1}{1 + \sigma}$.\\ \\ 
Assume that for some $k \geq 1$, the following condition holds: $$\mathcal{F}^{k-1}(x_{k-1}, d_{k-1})\leq \tfrac{1}{1+\sigma}\mathcal{F}^{k-1}(x_{k-1},u_{k-1}).$$
Observe that if $\left\lvert\mathcal{F}^{k-1}(x_{k},d_{k-1})\right\rvert < \mathcal{F}^{k}(x_{k},d_{k-1})$, then
\[d_{k}=u_{k}\implies \mathcal{F}^{k}(x_{k},d_{k})=\mathcal{F}^{k}(x_{k},u_{k}).\]
Accordingly, the direction $d_{k}$  holds the condition \eqref{suf_d} with $c = \tfrac{1}{1 + \sigma}$.
\noindent Therefore, we aim to show that for any $k\in \mathbb{N}$ with $\left\lvert\mathcal{F}^{k-1}(x_{k},d_{k-1})\right\rvert > \mathcal{F}^{k}(x_{k},d_{k-1})$, $d_{k}$ satisfies the condition \eqref{suf_d}. Since $\alpha_{k}$ satisfies the strong Wolfe condition \eqref{strong. wol.}, we obtain
\begin{align*}
  \mathcal{F}^{k-1}(x_{k}, d_{k-1})\geq \sigma \mathcal{F}^{k-1}\left(x_{k-1},d_{k-1}\right)>\mathcal{F}^{k-1}\left(x_{k-1},d_{k-1}\right)
   \end{align*}
  because $0<\sigma <1$ and $\mathcal{F}^{k-1}\left(x_{k-1},d_{k-1}\right)<0$.
  Therefore,
   \begin{align*}
   \mathcal{F}^{k-1}(x_k,d_{k-1})-\mathcal{F}^{k-1}\left(x_{k-1},d_{k-1}\right)> 0.
\end{align*}
Thus, the denominator of $\beta_{\mathrm{DY}_k}$ as given in \eqref{DY_paramete} is positive. Consequently, $\beta_{\mathrm{DY}_k}$ is well-defined. Since $\mathcal{F}^{k}(x_k,u_{k})<0$, we have  $\beta_{\mathrm{DY}_k}>0$.\\ \\ 
From the definition of $d_{k}$, for $k\geq1$, we have
$d_{k}=u_{k}+\beta_{k}d_{k-1}.$ Therefore,
\[ \nabla f^{a_{k,j}}(x_{k})^{\top}d_{k}= \nabla f^{a_{k,j}}(x_{k})^{\top}u_{k}+\beta_{k}\nabla f^{a_{k,j}}(x_{k})^{\top}d_{k-1},   j\in [\omega_{k}].
\]
From Lemma \ref{pro_Ger.} (i), for all $j\in [\omega_{k}]$, it follows that 
\begin{align*}
&\psi_{e}\left(\nabla f^{a_{k,j}}(x_{k})^{\top}d_{k}\right)\leq \psi_{e}\left(\nabla f^{a_{k,j}}(x_{k})^{\top}u_{k}\right)+\beta_{k}\psi_{e}\left(\nabla f^{a_{k,j}}(x_{k})^{\top}d_{k-1}\right)\\
\implies & \mathcal{F}^{k}(x_{k}, d_{k})\leq \mathcal{F}^{k}(x_{k}, u_{k})+\beta_{k} \mathcal{F}^{k}(x_{k}, d_{k-1}).
\end{align*}
Notice that if $\mathcal{F}^{k}(x_{k}, d_{k-1})\leq 0$, then $d_k$ satisfies the sufficient descent condition \eqref{suf_d}. Therefore, assume that $\mathcal{F}^{k}(x_{k}, d_{k-1})> 0$ and $\beta_{\mathrm{DY}_k}>0$. Then, we have
\begin{align*}
  \mathcal{F}^{k}(x_{k}, d_{k})&\leq \mathcal{F}^{k}(x_{k}, u_{k})+\beta_{\mathrm{DY}_k} \mathcal{F}^{k}(x_{k}, d_{k-1}) \text{ since }0<\beta_{k}\leq \beta_{\mathrm{DY}_k}\\
 &\leq \tfrac{-\mathcal{F}^{k-1}(x_{k-1}, d_{k-1})\mathcal{F}^{k}(x_{k}, u_{k})}{\mathcal{F}^{k-1}(x_k,d_{k-1})-\mathcal{F}^{k-1}\left(x_{k-1},d_{k-1}\right)}\\
 &\leq \tfrac{\mathcal{F}^{k}(x_{k}, u_{k})}{1-\mathcal{V}_{k}}, \text{ where }\mathcal{V}_{k}:=\tfrac{\mathcal{F}^{k-1}(x_k,d_{k-1})}{\mathcal{F}^{k-1}\left(x_{k-1},d_{k-1}\right)}\\
 & \leq \tfrac{\mathcal{F}^{k}(x_{k}, u_{k})}{1+\sigma} \text{ because from }\eqref{strong. wol.}, ~\mathcal{V}_{k}\in [-\sigma,\sigma],
\end{align*}
which completes the proof.
\end{proof}

\smallskip

In the next theorem, under a suitable hypothesis, we demonstrate that global convergence can be achieved if $\beta_k$ is an appropriate fraction of the DY parameter $\beta_{\mathrm{DY}_k}$. \\

\begin{theorem}\label{Con_DY}
    Consider \emph{Algorithm \ref{algo}} with $\beta_k = \eta \beta_{\mathrm{DY}_k}$ for all $k = 0, 1, 2, \ldots$, where $0 \leq \eta < \tfrac{1 - \sigma}{1 + \sigma}$. Suppose that \emph{Assumptions \ref{lip_ass}} and \emph{\ref{bddbe_ass} }are satisfied. If $\{x_{k}\}$ is a sequence of nonstationary points generated by this algorithm where $\alpha_k$ is chosen by the strong Wolfe conditions \eqref{Arm with strong. wol.} for all $k = 0, 1, 2, \ldots$, then $\underset{k \to \infty}{\liminf} ~ \| u_{k} \| = 0$.
\end{theorem}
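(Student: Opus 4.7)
The strategy is to invoke Theorem \ref{Conv_Algo}(i) after verifying its two hypotheses: the sufficient descent condition for $\{d_k\}$ and the divergence $\sum_{k\ge 0} 1/\|d_k\|^2 = +\infty$. Since $\eta < \tfrac{1-\sigma}{1+\sigma} < 1$, we have $0 \le \beta_k = \eta\,\beta_{\mathrm{DY}_k} \le \beta_{\mathrm{DY}_k}$ on the non-restart branch, where $\beta_{\mathrm{DY}_k}$ is well-defined and nonnegative (as shown in the proof of Lemma \ref{DY_s.D}); on the restart branch $d_k = u_k$, sufficient descent is trivial. Thus Lemma \ref{DY_s.D} supplies the sufficient descent condition with $c = \tfrac{1}{1+\sigma}$. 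By Remark \ref{existence_of_std_wolfe}, the strong Wolfe conditions imply the standard Wolfe conditions, and hence Theorem \ref{the. zoun.} provides the Zoutendijk-like inequality $\sum_{k\ge 0}\mathcal{F}^k(x_k,d_k)^2/\|d_k\|^2 < +\infty$.

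To establish $\sum_{k\ge 0}1/\|d_k\|^2 = +\infty$, I would argue by contradiction. Suppose $\liminf_{k\to\infty}\|u_k\| > 0$, so that there is a $\gamma > 0$ with $\|u_k\|\ge \gamma$ for every $k$. Proposition \ref{Pro. station. cond.}, combined with $\varphi_{x_k}(a_k,u_k) < 0$, gives $\mathcal{F}^k(x_k,u_k) \le -\tfrac12 \|u_k\|^2 \le -\tfrac{\gamma^2}{2}$, and sufficient descent amplifies this to $|\mathcal{F}^k(x_k,d_k)| \ge c\gamma^2/2$. Plugging this uniform lower bound into the Zoutendijk inequality would force $\sum_k 1/\|d_k\|^2 < +\infty$; obtaining the reverse divergence is therefore what produces the contradiction.

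The core calculation is a recursive bound on the ratio $\|d_k\|^2/\mathcal{F}^k(x_k,d_k)^2$, patterned on the classical Dai-Yuan proof in the scalar setting. On the non-restart branch $d_k = u_k + \beta_k d_{k-1}$, I would square, apply Lemma \ref{S_calcul.}(iv) with a carefully chosen auxiliary parameter $t$, substitute the explicit form of $\beta_{\mathrm{DY}_k}$, and dominate the denominator $\mathcal{F}^{k-1}(x_k,d_{k-1}) - \mathcal{F}^{k-1}(x_{k-1},d_{k-1})$ via the strong Wolfe estimate $|\mathcal{F}^{k-1}(x_k,d_{k-1})| \le \sigma|\mathcal{F}^{k-1}(x_{k-1},d_{k-1})|$; the margin $\eta < (1-\sigma)/(1+\sigma)$ is exactly what absorbs the surplus factor in front of $\|d_{k-1}\|^2/\mathcal{F}^{k-1}(x_{k-1},d_{k-1})^2$, producing an estimate of the shape
\[
\frac{\|d_k\|^2}{\mathcal{F}^k(x_k,d_k)^2} \;\le\; \frac{\|d_{k-1}\|^2}{\mathcal{F}^{k-1}(x_{k-1},d_{k-1})^2} \;+\; \frac{M}{\|u_k\|^2},
\]
where $M$ depends only on $\sigma$ and $\eta$. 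Telescoping and using $\|u_k\|\ge \gamma$ gives $\|d_k\|^2/\mathcal{F}^k(x_k,d_k)^2 \le M(k+1)/\gamma^2$, and hence $\sum_k \mathcal{F}^k(x_k,d_k)^2/\|d_k\|^2 \ge (\gamma^2/M)\sum_k 1/(k+1) = +\infty$, contradicting Theorem \ref{the. zoun.}.

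The main obstacle is the bookkeeping in this recursion. Unlike the vector optimization setting of \cite{lucambio2018nonlinear}, where $\mathcal{F}^k$ does not depend on $k$ and algebraic identities hold with equality, here the index $a_k \in P_k$ may change from iteration to iteration, so subadditivity of $\psi_e$ only yields $\mathcal{F}^k(x_k,d_k) \le \mathcal{F}^k(x_k,u_k) + \beta_k \mathcal{F}^k(x_k,d_{k-1})$, and one must repeatedly compare $\mathcal{F}^{k-1}$ at the new point $x_k$ with $\mathcal{F}^k$ at the same point. This is precisely the situation addressed by the regular restart criterion of Step 4: whenever $\beta_k \neq 0$, we are guaranteed $|\mathcal{F}^{k-1}(x_k,d_{k-1})| \ge \mathcal{F}^k(x_k,d_{k-1})$, which is the missing link that allows the scalar-style Dai-Yuan recursion to survive intact in the set-valued framework.
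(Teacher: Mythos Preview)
Your core argument in the third paragraph is correct and mirrors the paper's proof: assume $\|u_k\|\ge\gamma$, bound $\beta_k$ using the strong Wolfe inequality on the denominator of $\beta_{\mathrm{DY}_k}$ together with the sufficient descent constant from Lemma \ref{DY_s.D}, apply Lemma \ref{S_calcul.}(iv) with the parameter $t$ tuned so that the coefficient in front of the $(k-1)$-term equals $1$, telescope, and contradict Theorem \ref{the. zoun.}. The only cosmetic difference is that the paper runs the recursion on $\|d_k\|^2/\mathcal{F}^k(x_k,u_k)^2$ and invokes sufficient descent at the very end, whereas you run it on $\|d_k\|^2/\mathcal{F}^k(x_k,d_k)^2$; both work since the two quantities differ by the factor $c^2=(1+\sigma)^{-2}$.

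Two small points of clarification. First, the framing via Theorem \ref{Conv_Algo}(i) is an unnecessary detour: you never actually verify its hypothesis $\sum_k 1/\|d_k\|^2=+\infty$, and the paper does not go through that theorem either; the contradiction with Zoutendijk is obtained directly, exactly as you do in your third paragraph. Second, the restart criterion $|\mathcal{F}^{k-1}(x_k,d_{k-1})|\ge \mathcal{F}^k(x_k,d_{k-1})$ is not the ``missing link'' for this particular recursion: the bound $\beta_k\le \delta\,|\mathcal{F}^k(x_k,u_k)|/|\mathcal{F}^{k-1}(x_{k-1},d_{k-1})|$ (with $\delta=\eta(1+\sigma)/(1-\sigma)$) follows from strong Wolfe and Lemma \ref{DY_s.D} alone, without ever comparing $\mathcal{F}^k$ and $\mathcal{F}^{k-1}$ at the same point. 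The restart condition matters inside Lemma \ref{DY_s.D}, not in the telescoping step.
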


\begin{proof}
By the strong Wolfe condition \eqref{strong. wol.}, we obtain
\begin{align}\label{conver_1}
  \mathcal{F}^{k-1}(x_k,d_{k-1}) - \mathcal{F}^{k-1}(x_{k-1}, d_{k-1})\geq (\sigma - 1) \mathcal{F}^{k-1}(x_{k-1}, d_{k-1}).  
\end{align}
 According to Lemma \ref{DY_s.D}, $d_k$ satisfies the sufficient descent direction \eqref{suf_d} with $c = \tfrac{1}{1 + \sigma}$ for all $k \geq 0$. Therefore, from \eqref{conver_1}, we get
 $$\mathcal{F}^{k-1}(x_k,d_{k-1}) - \mathcal{F}^{k-1}(x_{k-1}, d_{k-1})\geq \tfrac{\sigma - 1}{1 + \sigma} \mathcal{F}^{k-1}(x_{k-1},u_{k-1}) > 0$$
 because $0<\sigma<1$ and $\mathcal{F}^{k-1}(x_{k-1},u_{k-1})<0$.
Thus, we obtain
\begin{align*}
\tfrac{-\mathcal{F}^{k-1}(x_{k-1},u_{k-1})}{\mathcal{F}^{k-1}(x_k,d_{k-1}) - \mathcal{F}^{k-1}(x_{k-1}, d_{k-1})} \leq \tfrac{1 + \sigma}{1 - \sigma}.
\end{align*}
Define $\delta := \eta\tfrac{ 1 + \sigma}{1 - \sigma} < 1$. Thus, using the definition of $\beta_k$, we have
\begin{align*}
\beta_k & = \delta \tfrac{1 - \sigma}{1 + \sigma} \left[ \tfrac{-\mathcal{F}^{k}(x_k,u_{k})}{\mathcal{F}^{k-1}(x_k,d_{k-1}) -\mathcal{F}^{k}(x_{k-1}, d_{k-1})}  \right]\\
&= \delta \tfrac{1 - \sigma}{1 + \sigma} \left[\tfrac{-\mathcal{F}^{k}(x_k,u_{k})}{-\mathcal{F}^{k}(x_{k-1},u_{k-1})}\right] \left[\tfrac{-\mathcal{F}^{k}(x_{k-1},u_{k-1})}{\mathcal{F}^{k-1}(x_k,d_{k-1}) -\mathcal{F}^{k}(x_{k-1}, d_{k-1})}\right]\\
&\leq \delta \left[\tfrac{\mathcal{F}^{k}(x_k,u_{k})}{\mathcal{F}^{k}(x_{k-1},u_{k-1})}\right].\label{conv_2}    
\end{align*}
Assume on contrary that there exists $\gamma>0$ such that 
\begin{align}
    \|u_{k}\|\geq \gamma \text{ for all } k \ge 0. 
\end{align}
Note that 
\begin{align*}
    \|d_{k}\|^{2}\leq \bigl[\|u_{k}\|+\lvert \beta_k\rvert \|d_{k-1}\|\bigr]^{2}\leq \tfrac{1}{1-\delta^{2}}\|u_{k}\|^{2}+\tfrac{1}{\delta^{2}}\beta^{2}_{k}\|d_{k-1}\|^{2}
\end{align*}
because of Lemma \ref{S_calcul.} (iv) with $z_{1}=\|u_{k}\|,~ z_{2}=\lvert \beta_k\rvert \|d_{k-1}\|,$ and $t=\tfrac{\delta}{\sqrt{(1-\delta^{2})}}.$\\
Thus,
\begin{align*}
    \tfrac{\|d_{k}\|^{2}}{{\mathcal{F}^{k}}^{2}(x_{k},u_k)} &\leq \tfrac{1}{1-\delta^{2}}\tfrac{\|u_{k}\|^{2}}{{\mathcal{F}^{k}}^{2}(x_{k},u_k)}+\tfrac{1}{\delta^{2}}\tfrac{\beta^{2}_{k}\|d_{k-1}\|^{2}}{{\mathcal{F}^{k}}^{2}(x_{k},u_k)}\\
    & \overset{\eqref{conv_2}}{\leq} \tfrac{1}{1-\delta^{2}}\tfrac{\|u_{k}\|^{2}}{{\mathcal{F}^{k}}^{2}(x_{k},u_k)}+\tfrac{\|d_{k-1}\|^{2}}{{\mathcal{F}^{k-1}}^{2}(x_{{k-1}},u_{k-1})}.\\
\end{align*}
Since $0\leq \gamma^{2}\leq \|u_{k}\|^{2}<-2\mathcal{F}^{k}(x_{k},u_k)$, therefore
\begin{align*}
  \tfrac{\|d_{k}\|^{2}}{{\mathcal{F}^{k}}^{2}(x_{k},u_k)}\leq \tfrac{4}{(1-\delta^{2})\gamma^{2}}+\tfrac{\|d_{k-1}\|^{2}}{{\mathcal{F}^{k-1}}^{2}(x_{{k-1}},u_{k-1})}.   
\end{align*}
Applying this relation repeatedly, we obtain
\begin{align*}
  \tfrac{\|d_{k}\|^{2}}{{\mathcal{F}^{k}}^{2}(x_{k},u_k)}&\leq \tfrac{4}{(1-\delta^{2})\gamma^{2}}k+\tfrac{\|d_{0}\|^{2}}{{\mathcal{F}^{0}}^{2}(x_{0},u_{0})} \leq \tfrac{4}{(1-\delta^{2})\gamma^{2}}k+\tfrac{4}{\gamma^{2}}.
\end{align*}
Thus,
\begin{align}\label{r. below}
    \tfrac{{\mathcal{F}^{k}}^{2}(x_{k},u_k)}{\|d_{k}\|^{2}}\geq \tfrac{(1-\delta^{2})\gamma^{2}}{4(k+1-\delta^{2})}\geq \tfrac{(1-\delta^{2})\gamma^{2}}{4(k+1)}.
\end{align}
Now, by using the sufficient descent condition \eqref{suf_d} and the relation \eqref{r. below}, we have
\begin{align*}
    \sum_{k\geq 0}\tfrac{{\mathcal{F}^{k}}^{2}(x_{k},d_k)}{\|d_{k}\|^{2}}\geq   \sum_{k\geq 0}c^{2}\tfrac{{\mathcal{F}^{k}}^{2}(x_{k},u_{k})}{\|d_{k}\|^{2}}\geq \tfrac{c^{2}(1-\delta^{2})\gamma^{2}}{4}\sum_{k\geq 0}\tfrac{1}{k+1}=+\infty,
\end{align*}
which is contradictory to \eqref{Zounten.}. Thus, the result follows.

\end{proof}

Next, we consider the convergence analysis of Algorithm \ref{algo} for PRP and HS parameters. Our results are based on the work of Prudente et al.  \cite{lucambio2018nonlinear} for vector optimization and the work of Gilbert and Nocedal \cite{gilbert1992global} for scalar optimization by introducing the Property $(\ast)$. The vector extension of this property is given in \cite{lucambio2018nonlinear} as follows. \\

\begin{framed}
\noindent
\textbf{Property} $(\ast)$  Consider a nonlinear conjugate gradient scheme that satisfies 
\begin{equation}\label{aux_star}
  0<\gamma \leq \|u_{k}\|\leq \bar{\gamma} \text{ for all } k = 0, 1, 2, \ldots.   
\end{equation}
Under this setup, we say that an iterative method possesses Property $(\ast)$ if there exist constants $b>1$ and $\lambda>0$ such that for all $k = 0, 1, 2, \ldots$, we have $\beta_{k} \leq b$, and 
\begin{equation*}
      \|s_{k-1}\|\leq \lambda  \implies \beta_{k}\leq \tfrac{1}{2b}, 
\end{equation*}
where $s_{k-1}=x_{k}-x_{k-1}$.

\end{framed}

\vspace{0.5cm}

Now, we analyze whether the proposed method, possessing Property $(\ast)$, can lead the iterative points to converge to a stationary point of the set-valued objective function $F$. Following this, we demonstrate that PRP and HS methods exhibit Property $(\ast)$ under certain mild assumptions.\\

\begin{theorem}\label{theo req of prp con}
Let $\{x_{k}\}$ be a sequence of nonstationary points generated by \emph{Algorithm \ref{algo}} with $\beta_k \geq 0 $ for all $k \ge 0$. Suppose that \emph{Assumptions \ref{Ass. 3}} and \emph{\ref{lip_ass}} hold, and for all $k \ge 0$, $d_k$ satisfies the sufficient descent condition \eqref{suf_d} and $\alpha_k$  satisfies the standard Wolfe conditions \eqref{Arm with stan. wol.} at $x_k$. Moreover, we assume that the method exhibits \emph{Property} $(\ast)$. Then, $\underset{k \to \infty}{\liminf} \| u_{k} \| = 0 $.
\end{theorem}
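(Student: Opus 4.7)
The plan is to argue by contradiction, mirroring the classical Gilbert--Nocedal framework but with care to account for the index-dependent function $\mathcal{F}^k$ and the regular-restart rule in Step 4 of Algorithm \ref{algo}. Suppose the conclusion fails, so there exists $\gamma>0$ with $\|u_k\|\geq\gamma$ for all $k\geq 0$. First I would establish a matching upper bound $\|u_k\|\leq\bar\gamma$: the Armijo-type condition \eqref{Arm in Stand.} forces $\{x_k\}\subseteq\mathcal{L}$, which is bounded by Assumption \ref{Ass. 3}; continuity of the $\nabla f^i$ on a neighborhood of $\mathcal{L}$ together with the strong convexity of $\varphi_{x_k}(a,\cdot)$ then uniformly bounds $\|u_k\|$. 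This validates the hypothesis \eqref{aux_star} of Property $(\ast)$.

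Next I would combine the sufficient descent condition \eqref{suf_d} with the Zoutendijk-like relation \eqref{Zounten.} from Theorem \ref{the. zoun.}. Using $\mathcal{F}^k(x_k,u_k)\leq -\tfrac{1}{2}\|u_k\|^2\leq -\tfrac{1}{2}\gamma^2$ and $\mathcal{F}^k(x_k,d_k)\leq c\,\mathcal{F}^k(x_k,u_k)$, I would deduce
\[\sum_{k\geq 0}\tfrac{1}{\|d_k\|^2}\leq \tfrac{4}{c^2\gamma^4}\sum_{k\geq 0}\tfrac{{\mathcal{F}^k}^2(x_k,d_k)}{\|d_k\|^2}<+\infty,\]
so in particular $\|d_k\|\to\infty$. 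The second ingredient is a direction-stability estimate: setting $v_k:=d_k/\|d_k\|$ and $\delta_k:=\beta_k\|d_{k-1}\|/\|d_k\|$, the identity $v_k=u_k/\|d_k\|+\delta_k v_{k-1}$ (valid on the non-restart iterations) together with $\|v_k\|=\|v_{k-1}\|=1$ gives $\|v_k-\delta_k v_{k-1}\|=\|u_k\|/\|d_k\|$, from which a standard trick (comparing $\|v_k-v_{k-1}\|$ to $|1-\delta_k|+\|v_k-\delta_k v_{k-1}\|$ when $\delta_k\geq 0$) yields $\|v_k-v_{k-1}\|\leq 2\|u_k\|/\|d_k\|\leq 2\bar\gamma/\|d_k\|$. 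Squaring and summing, $\sum_k\|v_k-v_{k-1}\|^2<\infty$. For the restart indices $d_k=u_k$ the contribution is bounded by the same quantity, so this sum is still finite over all iterations.

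The final and technically most delicate step is the Gilbert--Nocedal partition argument, which is where Property $(\ast)$ enters crucially. For a large integer $\Delta$ (to be chosen) and each $k$, look at the block $[k, k+\Delta-1]$ and count the indices $i$ with $\|s_{i-1}\|>\lambda$. Using $\sum\|s_{i-1}\|^2=\sum\alpha_{i-1}^2\|d_{i-1}\|^2$, the boundedness of $\mathcal{L}$ (so $\sum\|s_{i-1}\|$ is not too large on any fixed range of indices since iterates lie in a bounded set), and the direction-stability sum, I would show that for $\Delta$ sufficiently large and $k$ sufficiently large, at least half of the indices in each block satisfy $\|s_{i-1}\|\leq\lambda$. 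By Property $(\ast)$, on these indices $\beta_i\leq 1/(2b)$, while on the other indices $\beta_i\leq b$. Telescoping $\|d_{k+\Delta}\|$ via $d_k=u_k+\beta_kd_{k-1}$ and the bounds on $\beta_i$, one obtains a uniform upper bound $\|d_{k+\Delta}\|\leq M$ along a subsequence, contradicting $\|d_k\|\to\infty$.

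The main obstacle I anticipate is executing the partition argument cleanly in the set-valued setting: because $\mathcal{F}^{k-1}$ and $\mathcal{F}^{k}$ are defined through different partition choices $a_{k-1}$ and $a_k$, one must verify that the direction-stability estimate survives across iterations where the active index structure changes. The regular-restart clause in \eqref{def_dk} is precisely what saves the argument: whenever $|\mathcal{F}^{k-1}(x_k,d_{k-1})|<\mathcal{F}^k(x_k,d_{k-1})$, we have $d_k=u_k$, so such indices behave as if $\beta_k=0$ and contribute a controlled $O(1)$ term to the telescoped bound, rather than propagating the mismatch between $\mathcal{F}^{k-1}$ and $\mathcal{F}^{k}$.
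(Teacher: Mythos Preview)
Your proposal is correct and follows essentially the same approach as the paper, which simply states that the proof proceeds along the lines of \cite[Theorem 5.10]{lucambio2018nonlinear} (the Gilbert--Nocedal argument adapted to vector optimization). You have in fact supplied considerably more detail than the paper does, including the set-valued-specific observation that restart indices, where $\|d_k\|=\|u_k\|\leq\bar\gamma$, can occur only finitely often once $\|d_k\|\to\infty$, so the direction-stability and partition arguments eventually run exactly as in the vector case.
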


\begin{proof}
The lines of the proof are similar to \cite[Theorem 5.10]{lucambio2018nonlinear}.
\end{proof}

We can now establish the convergence of the PRP and HS parameters. \\

\begin{theorem}
Let $\{x_k\}$ be a sequence of nonstationary points generated by Algorithm \emph{\ref{algo}} with $\beta_{k}=\max \{\beta_{\mathrm{PRP}_{k}},0\}$ or $\beta_{k}=\max \{\beta_{\mathrm{HS}_{k}},0\}$. Assume that \emph{Assumptions \ref{Ass. 3}} and \emph{\ref{lip_ass}} hold and for all $k \ge 0$, $d_{k}$ satisfies the sufficient descent condition \eqref{suf_d} and $\alpha_{k}$ satisfies the standard Wolfe conditions \eqref{Arm with stan. wol.} at $x_{k}$. Then, $\underset{k\rightarrow \infty}{\liminf}\|u_{k}\|=0$. 
\end{theorem}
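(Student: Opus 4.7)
The plan is to reduce the result to Theorem \ref{theo req of prp con} by verifying that both the PRP$_+$ and HS$_+$ variants satisfy Property $(\ast)$. Throughout, I will work under the standing assumption of Property $(\ast)$ that there exist $0<\gamma\le\bar\gamma$ with $\gamma\le\|u_k\|\le\bar\gamma$ for all $k$ (otherwise $\liminf\|u_k\|=0$ already holds and there is nothing to prove). Since $\{x_k\}\subset\mathcal{L}$ and $\mathcal{L}$ is bounded (Assumption \ref{Ass. 3}), the gradients $\nabla f^i$ are bounded on $\mathcal{L}$ by some $M>0$; together with the Lipschitz constant $\bar L$ of $\psi_e$ (Lemma \ref{pro_Ger.}(iii)) and the common Lipschitz constant $L$ of the gradients (Assumption \ref{lip_ass}), this gives the uniform estimates I will need.

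The key estimate concerns the common numerator $-\mathcal{F}^{k}(x_k,u_k)+\mathcal{F}^{k}(x_{k-1},u_k)$ appearing in both $\beta_{\mathrm{PRP}_k}$ and $\beta_{\mathrm{HS}_k}$. Using the $\bar L$-Lipschitzness of $\psi_e$, the Lipschitzness of $\nabla f^{a_{k,j}}$, and the standard $|\max_j\alpha_j-\max_j\beta_j|\le\max_j|\alpha_j-\beta_j|$ bound on max over a common index set $a_k$, I would show
\begin{equation*}
\bigl|\mathcal{F}^{k}(x_k,u_k)-\mathcal{F}^{k}(x_{k-1},u_k)\bigr|\le \bar L L\,\|s_{k-1}\|\,\|u_k\|\le \bar L L\bar\gamma\,\|s_{k-1}\|.
\end{equation*}
For the denominators I would bound them away from zero. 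For PRP$_+$, Proposition \ref{Pro. station. cond.} together with \eqref{varphhi} at the nonstationary point $x_{k-1}$ gives $-\mathcal{F}^{k-1}(x_{k-1},u_{k-1})\ge\tfrac12\|u_{k-1}\|^2\ge\tfrac12\gamma^2$. For HS$_+$, the standard Wolfe condition \eqref{stan, wol.} yields $\mathcal{F}^{k-1}(x_k,d_{k-1})-\mathcal{F}^{k-1}(x_{k-1},d_{k-1})\ge(\sigma-1)\mathcal{F}^{k-1}(x_{k-1},d_{k-1})$, and the sufficient descent condition \eqref{suf_d} combined with the PRP$_+$ bound above gives $-\mathcal{F}^{k-1}(x_{k-1},d_{k-1})\ge -c\,\mathcal{F}^{k-1}(x_{k-1},u_{k-1})\ge\tfrac{c\gamma^2}{2}$, so the HS$_+$ denominator is at least $\tfrac{(1-\sigma)c\gamma^2}{2}>0$.

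Combining the numerator and denominator estimates, there is a constant $C=C(\bar L,L,\bar\gamma,\gamma,\sigma,c)>0$ such that for both variants
\begin{equation*}
0\le\beta_k\le C\,\|s_{k-1}\|.
\end{equation*}
Since $\{x_k\}\subset\mathcal{L}$ is bounded, $\|s_{k-1}\|$ is bounded by the diameter $D$ of $\mathcal{L}$, so $\beta_k\le CD=:b'$; choosing $b=\max\{b',2\}>1$ gives the first requirement of Property $(\ast)$. For the second, given any such $b$, setting $\lambda=\tfrac{1}{2bC}$ yields the implication $\|s_{k-1}\|\le\lambda\Rightarrow\beta_k\le\tfrac{1}{2b}$. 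Hence Property $(\ast)$ is satisfied by both choices, and Theorem \ref{theo req of prp con} delivers $\liminf_{k\to\infty}\|u_k\|=0$.

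The main obstacle I anticipate is the HS$_+$ denominator bound: getting a clean lower bound on $\mathcal{F}^{k-1}(x_k,d_{k-1})-\mathcal{F}^{k-1}(x_{k-1},d_{k-1})$ that depends only on $\gamma$ (not on the, a priori unknown, behavior of $d_{k-1}$) requires combining the standard Wolfe condition \eqref{stan, wol.} with the sufficient descent inequality \eqref{suf_d} and the strong convexity estimate $\mathcal{F}^{k-1}(x_{k-1},u_{k-1})\le-\tfrac12\|u_{k-1}\|^2$ in the right order; the rest of the argument is essentially bookkeeping built on the vector-optimization template of \cite{lucambio2018nonlinear}, adapted to the present set-valued setting by replacing inner products with Gerstewitz-scalarized maxima and by working on the fixed partition element $a_k$ common to both $\mathcal{F}^k(x_k,\cdot)$ and $\mathcal{F}^k(x_{k-1},\cdot)$.
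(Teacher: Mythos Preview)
Your proposal is correct and follows essentially the same route as the paper: verify Property~$(\ast)$ for both PRP$_+$ and HS$_+$ by bounding the common numerator via Lipschitz continuity of $\psi_e$ and of the gradients, bounding the denominators away from zero using $-\mathcal{F}^{k-1}(x_{k-1},u_{k-1})\ge\tfrac12\gamma^2$ (PRP) and the Wolfe-plus-sufficient-descent chain (HS), and then invoking Theorem~\ref{theo req of prp con}. The only minor difference is presentational: you package the argument as a single inequality $\beta_k\le C\|s_{k-1}\|$ and then read off both requirements of Property~$(\ast)$ from it (using the diameter of $\mathcal{L}$ for the uniform bound), whereas the paper establishes $\beta_k\le b$ and the $\lambda$-implication separately by first bounding $-\mathcal{F}^k(x_k,u_k)$ and $|\mathcal{F}^k(x_{k-1},u_k)|$ individually via gradient boundedness.
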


\begin{proof}
In view of Theorem \ref{theo req of prp con}, Algorithm \ref{algo} holds the result with with $\beta_{k}=\max \{\beta_{\mathrm{PRP}_{k}},0\}$ or $\beta_{k}=\max \{\beta_{\mathrm{HS}_{k}},0\}$ if $\mathrm{PRP}$ and $\mathrm{HS}$ methods have Property $(\ast)$. To prove that these two choices of $\beta_k$ satisfy Property $(\ast)$, we begin with the assumption of \eqref{aux_star}. Then,  
\begin{align}\label{the1.511}
\mathcal{F}^{k}(x_k,u_{k})< -\tfrac{1}{2}\|u_{k}\|^{2}\leq -\tfrac{1}{2}\gamma^{2}.
\end{align}
Note that, from Lemma \ref{pro_Ger.} (iii), $\psi_{e}$ is a Lipschitz continuous function. Therefore, there exists a Lipschitz constant, say, $\bar{L}$ such that
\begin{align}\label{the1.51}
  \forall~{j}\in [\omega_{k}]:~  \left\lvert \psi_{e}\left(\nabla f^{a_{k,j}}(x_{k})^{\top} u_{k}\right) \right\rvert \leq \bar{L}\left\|\nabla f^{a_{k,j}}(x_{k})^{\top} u_{k}\right\| \leq \bar{L}\left\|\nabla f^{a_{k,j}}(x_{k})\right\| \| u_{k}\|.
\end{align}
From the definition of $\mathcal{F}^{k}$, we have
\begin{align}
     \mathcal{F}^{k}(x_k,u_{k}) &=  \psi_{e}\left(\nabla f^{a_{k,j_{k}}}(x_{k})^{\top} u_{k}\right), \text{ where }j_{k}=\underset{j\in [\omega_{k}]}{\text{argmax}}\left\{\psi_{e}\left(\nabla f^{a_{k,j}}(x_{k})^{\top}u_{k}\right)\right\} \nonumber\\
      &= -\left \lvert \psi_{e}\left(\nabla f^{a_{k,j_{k}}}(x_{k})^{\top} u_{k}\right)\right  \rvert \overset{\eqref{the1.51}}{\geq} -\bar{L} \left \|\nabla f^{a_{k,j_{k}}}(x_{k})\|\|u_{k} \right\| \nonumber\\
    \implies  -\mathcal{F}^{k}(x_k,u_{k})&\leq \bar{L}\left\|\nabla f^{a_{k,j_{k}}}(x_{k})\right\|\|u_{k}\|. \label{the1.52}
\end{align}
From the continuity arguments, there exists constant $\tilde{c}>0$ such that $\left\|\nabla f^{a_{k,j}}(x_{k})\right\|\leq \tilde{c}$ for all $k\geq 0$ and $j\in [\omega_{k}]$ since $\{x_{k}\}\subset \mathcal{L}$. Furthermore, assume that $\|u_{k}\|\leq \tilde{\gamma} \text{ for all }k\geq 0$. Therefore, using \eqref{the1.511} and \eqref{the1.52}, we get
\begin{align*}
     \tfrac{1}{2}\gamma^{2}\leq    -\mathcal{F}^{k}(x_k,u_{k})\leq  \bar{\gamma}_{1} := \bar{L}\tilde{c}\tilde{\gamma}.
\end{align*}
Similarly, we can find $\bar{\gamma}_{2}\geq 0$ such that
\begin{align*}
     \left \lvert \mathcal{F}^{k}(x_{k-1},u_{k}) \right \rvert \leq \bar{\gamma}_{2}.
\end{align*}
Assume that $\bar{\gamma} = \max \{\bar{\gamma}_{1},\bar{\gamma}_{2}\}$. Then,
\begin{align}
    & \tfrac{1}{2}\gamma^{2}\leq    -\mathcal{F}^{k}(x_k,u_{k})\leq  \bar{\gamma}\label{the1.53}\\
\noalign{\noindent \text{and}}
    & \left \lvert \mathcal{F}^{k}(x_{k-1},u_{k}) \right \rvert \leq \bar{\gamma}.\label{the1.54}
\end{align}
On the other hand, suppose $\mathcal{F}^{k}(x_k,u_{k}) \leq\mathcal{F}^{k}(x_{k-1},u_{k}) $. Then, we have 
\begin{align}\label{the1.55}
 \left \lvert\mathcal{F}^{k}(x_{k-1},u_{k}) - \mathcal{F}^{k}(x_k,u_{k}) \right \rvert & =\left \lvert \psi_{e}\left(\nabla f^{a_{k,\bar{j}_{k}}}(x_{k-1})^{\top}u_{k}\right)- \mathcal{F}^{k}(x_k,u_{k})\right\rvert, \nonumber\\ 
 &~~~ \text{ where }\bar{j}_{k}=\underset{j\in [\omega_{k}]}{\text{argmax}}\left\{\psi_{e}\left(\nabla f^{a_{k,j}}(x_{k-1})^{\top}u_{k}\right)\right\}\nonumber\\
& \leq \left\lvert\psi_{e}\left(\nabla f^{a_{k,\bar{j}_{k}}}(x_{k-1})^{\top}u_{k}\right)- \psi_{e}\left(\nabla f^{a_{k,\bar{j}_{k}}}(x_{k})^{\top}u_{k}\right)\right\rvert \nonumber\\
& \leq \bar{L} \left\|\nabla f^{a_{k,\bar{j}_{k}}}(x_{k-1})- \nabla f^{a_{k,\bar{j}_{k}}}(x_{k})\right\|\|u_{k}\| \nonumber\\
 & \leq \bar{L}L\tilde{\gamma}\|x_{k}-x_{k-1}\| \leq \bar{L}L\tilde{\gamma}\lambda,
 \end{align}
where  $L$ is a Lipschitz constant of $\nabla f^{i}$ for all  $i\in [p]$ and $\|s_{k-1}\|\leq \lambda$.
In a similar manner, if $\mathcal{F}^{k}(x_{k-1},u_{k}) \leq\mathcal{F}^{k}(x_{k},u_{k}) $, then
\begin{align}\label{the1.56}
 \left \lvert\mathcal{F}^{k}(x_{k},u_{k}) - \mathcal{F}^{k}(x_{k-1},u_{k}) \right \rvert &= \left\lvert \psi_{e}\left(\nabla f^{a_{k,\tilde{j}_{k}}}(x_{k})^{\top}u_{k}\right)- \mathcal{F}^{k}(x_{k-1},u_{k})\right\rvert, \nonumber\\ 
 &~~~ \text{ where }\tilde{j}_{k}=\underset{j\in [\omega_{k}]}{\text{argmax}}\left\{\psi_{e}\left(\nabla f^{a_{k,j}}(x_{k})^{\top}u_{k}\right)\right\} \nonumber\\
& \leq \left \lvert\psi_{e}\left (\nabla f^{a_{k,\tilde{j}_{k}}}(x_{k-1})^{\top}u_{k}\right)- \psi_{e}\left(\nabla f^{a_{k,\tilde{j}_{k}}}(x_{k})^{\top}u_{k}\right)\right\rvert 
\nonumber\\
 & \leq \bar{L}L\tilde{\gamma}\lambda.
\end{align}
Because of the relations \eqref{the1.55} and \eqref{the1.56}, we have
\begin{equation}\label{the1.57}
    \left \lvert\mathcal{F}^{k}(x_{k-1},u_{k}) - \mathcal{F}^{k}(x_k,u_{k}) \right \rvert \leq \bar{L}L\tilde{\gamma}\lambda.
\end{equation}
\noindent Define $b:=\tfrac{4\bar{\gamma}}{\gamma^{2}}$ and $\lambda:=\tfrac{\gamma^{2}}{2\bar{L}L\tilde{\gamma}b}$ for the $\mathrm{PRP}$ method. By \eqref{the1.53} and \eqref{the1.54}, we get

\begin{equation*}
    \lvert\beta_{\mathrm{PRP}_{k}}\rvert \leq \tfrac{-\mathcal{F}^{k}(x_k,u_{k})+\lvert\mathcal{F}^{k}(x_{k-1},u_{k})\rvert}{-\mathcal{F}^{k-1}(x_{k-1},u_{k-1})}\leq \tfrac{4\bar{\gamma}}{\gamma^{2}}=b.
\end{equation*}
Now, if $\|s_{k-1}\|\leq \lambda$, then from \eqref{the1.53} and \eqref{the1.57}, it follows

\begin{equation*}
    \lvert\beta_{\mathrm{PRP}_{k}}\rvert \leq \tfrac{L\bar{L}\tilde {\gamma} \lambda}{\gamma^{2}}=\tfrac{1}{2b}.
\end{equation*}
Thus, $\mathrm{PRP}$ method has Property $(\ast)$.\\

Let us now examine the $\mathrm{HS}$ method. By the standard Wolfe condition \eqref{stan, wol.} and the sufficient descent condition \eqref{suf_d}, we obtain
\begin{align}
  \mathcal{F}^{k-1}(x_{k},d_{k-1}) -  \mathcal{F}^{k-1}(x_{k-1},d_{k-1}) &\geq \sigma \mathcal{F}^{k-1}(x_{k-1},d_{k-1})-\mathcal{F}^{k-1}(x_{k-1},d_{k-1}) \nonumber \\
  & =(\sigma-1)\mathcal{F}^{k-1}(x_{k-1},d_{k-1}) \nonumber\\
  & \geq c(\sigma-1)\mathcal{F}_{k-1}(x_{k-1},u_{k-1}) \label{the1.58}\\
  & \geq \tfrac{c(1-\sigma)\gamma^{2}}{2}> 0. \nonumber
\end{align}
For the HS method, define $b:=\tfrac{4\bar{\gamma}}{c(1-\sigma)\gamma^{2}}$ and $\lambda:=\tfrac{c(1-\sigma)\gamma^{2}}{4\bar{L}L\tilde{\gamma}b}$.\\ \\ 
 From relations \eqref{the1.53}, \eqref{the1.54}, and \eqref{the1.58}, we have
\begin{equation*}
     \lvert\beta_{\mathrm{HS}_{k}}\rvert \leq \tfrac{-\mathcal{F}^{k}(x_k,u_{k})+\lvert\mathcal{F}^{k}(x_{k-1},u_{k})\rvert}{\mathcal{F}^{k-1}(x_{k},d_{k-1}) -  \mathcal{F}^{k-1}(x_{k-1},d_{k-1})}\leq \tfrac{4\bar{\gamma}}{c(1-\sigma)\gamma^{2}}=b,
\end{equation*}
and when $\|s_{k-1}\|\leq \lambda$, it follows from \eqref{the1.57} and \eqref{the1.58} that 
\begin{equation*}
    \lvert\beta_{\mathrm{HS}_{k}}\rvert \leq \tfrac{L\bar{L}\tilde {\gamma} \lambda}{\gamma^{2}}=\tfrac{1}{2b},
\end{equation*}
which concludes that the $\mathrm{HS}$ method has Property $(\ast)$.    
\end{proof}

\section{Numerical experiments}\label{Numerical Experiments}
This section presents the numerical performance of the proposed Algorithm \ref{algo} when the $\beta_k$-value is chosen by the proposed DY, PRP and HS rules. All the computational experiments are carried out through MATLAB R2023b software installed on a laptop with Windows 11 operating system equipped with a 3.20 GHz Intel Core i5 CPU and 8 GB memory. The details of the experimental setup and parameters used during the implementation are given below. \\

\begin{itemize}
\item Out of the five test instances shown below, in Examples \ref{EX_1}, \ref{Ex_2} and \ref{Ex_3}, the ordering cone $K$ is taken as the usual nonnegative hyper-octant $\mathbb{R}^{m+}$. In Examples \ref{Ex_4} and \ref{Ex_5}, we consider problems with general cones that are not $\mathbb{R}^{m+}$. Moreover, one of the cones considered in Example \ref{Ex_5} is nonfinitely generated. \\

\item In all of the considered examples, except in one problem in Example \ref{Ex_5}, the parameter $ e \in \mathrm{int}(K)$ of the function $\psi_{e}$ is taken as $e = (1, 1, \ldots, 1)^{\top}$. In Example \ref{Ex_5}, for the problem with a nonfinitely generated cone, we take $e = (0, 0, 1)^{\top}$ since, in this case, $e = (1, 1, 1)^{\top}$ is not an interior point of $K$. \\

\item In Step 1 of Algorithm \ref{algo}, to compute the set $M_k:=\mathrm{Min}(F(x_k),K)$, we use the crude way of pair-wise comparing the elements in $F(x_k)$. \\

\item We use the stopping criterion in Step 3 of Algorithm \ref{algo} as $\|u_{k}\|<10^{-4}$. \\

\item For implementing the line search given in Step 5 of Algorithm \ref{algo}, we follow the strategy described in \cite{lucambio2019wolfe} with the parameters as $\alpha_{0}=1,~ \alpha_{\mathrm{max}}=100,~\rho=10^{-4}$, and $\sigma = 0.1$. \\

\item To evaluate the performance of the methods, we run the MATLAB code of Algorithm \ref{algo} for arbitrarily chosen $100$ initial points and calculate the min (minimum), mean, and max (maximum) of the following two metrics: 

\begin{itemize}
    \item \emph{Time}: The time (in seconds) taken by the algorithm to reach the stopping condition for each initial point.

    \item \emph{Iteration counts}: The number of iterations taken by the algorithm to reach the stopping condition for each initial point. \\ 
\end{itemize}

\item We compare the performance of the proposed DY, PRP, and HS methods with the FR (Fletcher-Reeves) and CD (Conjugate Descent) methods given in \cite{kumar2024nonlinear}. This comparison is exhibited for all the examples except for Example \ref{Ex_5} with a nonfinitely generated cone $K$ because the methods in \cite{kumar2024nonlinear} were derived only for finitely generated cones $K$. Note that we do not compare the proposed methods with the existing steepest descent method \cite{bouza2021steepest} for set optimization since Kumar et al.  \cite{kumar2024nonlinear} reported that FR and CD outperform the steepest descent method. \\


\item In each example, we depict the sequence of sets $\{F(x_k)\}$, where $\{x_k\}$ is the sequence of iterates generated by the method that outperforms the considered set optimization problem. In depicting the movement of the sequence $\{x_k\}$ or $\{F(x_k)\}$, we use red color to indicate the initial point, blue for the intermediate point, and green for the terminal point. \\ 

\end{itemize}

The first problem in the experiment is taken from \cite{bouza2021steepest}, which comes from the robust counterpart of a vector-valued facility location problem under uncertainty.\\

\begin{example}\label{EX_1}
     \emph{\cite[Test Instance 5.2]{bouza2021steepest} Consider the function $F: \mathbb{R}^{2} \rightrightarrows \mathbb{R}^{3}$ defined by}
     \[F(x):=\left\{f^{1}(x), f^{2}(x), \ldots,f^{100}(x)\right\},\]
\end{example}
\noindent
where the expression of $f^{i}(x), i=1,2,\ldots,100$, is given by

\[f^{i}(x):=\tfrac{1}{2}\left(\begin{aligned}
    & \|x-l_{1}-u_{i}\|^{2}\\
    & \|x-l_{2}-u_{i}\|^{2}\\
    & \|x-l_{3}-u_{i}\|^{2}
\end{aligned}\right),\]
where 
$l_1 :=\begin{pmatrix} 0 \\ 0 \end{pmatrix}$, 
$l_2 :=\begin{pmatrix} 0 \\ 8 \end{pmatrix}$, 
$l_3 :=\begin{pmatrix} 8 \\ 0 \end{pmatrix}$
and $\{u_{1},u_{2},\ldots,u_{100}\}$ is an enumeration of the set $\mathcal{U}:=\mathcal{U}_{1} \times \mathcal{U}_{1}$ with
\[\mathcal{U}_{1} := \left\{-1,-0.7778,-0.5556,-0.3333,-0.1111,0.1111,0.3333, 0.5556,0.7778,1 \right\}.\]
%
%
%
Note that the set of the local weakly minimal points of $F$ is 
\[\mathrm{conv}\left\{(l_{1}+\mathcal{U})\cup (l_{2}+\mathcal{U}) \cup (l_{3}+\mathcal{U})\right\}.\]
 
For the evaluation of the performance of an algorithm, we have arbitrarily picked 100 initial points from the set $[-50,~ 50] \times [-50, ~50]$. Then, we run all five conjugate gradient methods (proposed three---DY, PRP and HS---and two from \cite{kumar2024nonlinear}) for the same 100 initial points.  
A summary of the performance of all the five methods is provided in Table \ref{table_ex1}. In this example, we observe that the performance of all the methods is almost identical. \\ 

The set of solutions generated by the HS method for the chosen 100 initial points is shown in Fig. \ref{fig_ex1} by green-colored bullets. In Fig. \ref{fig_ex1}, three bigger size black-colored bullets represent the locations $l_{1},~l_{2}, \text{ and } l_{3}$; the three bunch of gray bullets collectively represent the set  
$(l_{1}+\mathcal{U})\cup (l_{2}+\mathcal{U}) \cup (l_{3}+\mathcal{U})$. Notice that all the generated green-colored bullets are weakly minimal points of the set-valued function $F$. \\

\begin{table}[!h]
\centering
\begin{tabular}{p{1.1cm} p{1.1cm} p{1cm} p{1.1cm} p{1.1cm} p{1.1cm} p{1.1cm}}
\hline

\hline 
\multicolumn{1}{c}{\text{Method}}& \multicolumn{3}{c}{\text{Iteration counts}} & \multicolumn{3}{c}{\text{Time}} \\
\cmidrule(rl){2-4} \cmidrule(rl){5-7}
& min & mean & max & min & mean & max\\
\hline

\hline
DY & 1 & 1.04 & 3  & 0.3457 & 1.6164 & 14.5765  \\
PRP & 1 & 1.03 & 2  & 0.3457 & 1.4872 & 8.7392\\
HS & 1 & 1.03 & 2  & 0.3457 & 1.4238 & 10.7682\\
FR & 1 & 1.04 & 3  & 0.3457 & 1.5256 & 10.6281\\
CD & 1 & 1.04 & 3  & 0.3457 & 2.0030 & 22.2712\\

\hline 

\hline
\end{tabular}
    \caption{Performance of conjugate gradient methods on Example  \ref{EX_1}}\label{table_ex1}
\end{table}

\begin{figure}[h!]
\captionsetup[subfigure]{labelformat=empty}
\centering
\subfloat[]{
\includegraphics[width=0.6\textwidth]{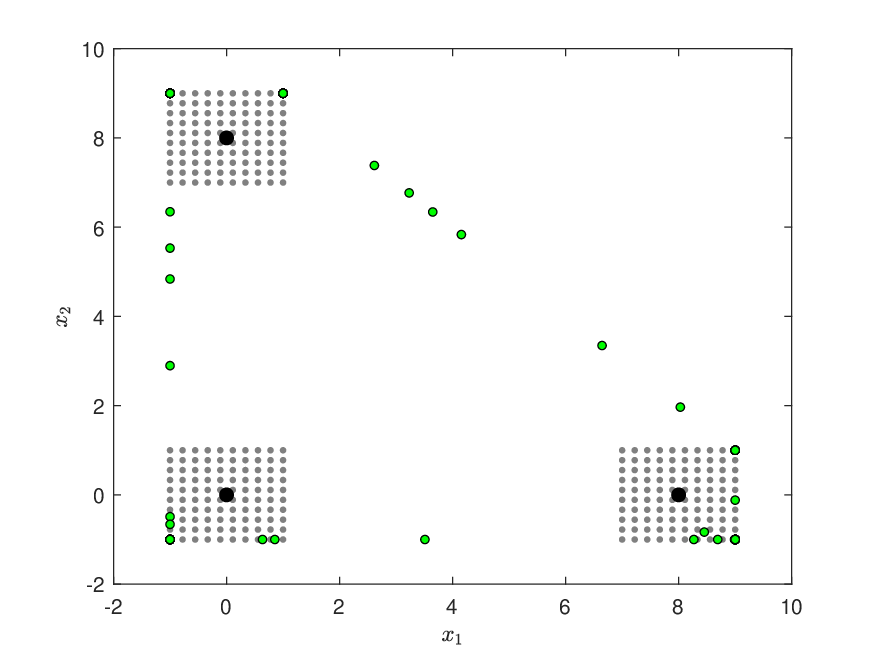}
}
\vspace{-0.8cm}
\caption{The solution set (green points) in the decision space of Example \ref{EX_1} generated by Algorithm \ref{algo} with HS rule}\label{fig_ex1}
\end{figure}

\begin{example}\label{Ex_2}
\emph{In this problem, we consider the function $F: \mathbb{R}^{2} \rightrightarrows \mathbb{R}^{2}$ defined by}
\[F(x):=\Bigl\{f^{1}(x), f^{2}(x), \ldots,f^{100}(x)\Bigr\},\]
\end{example}
\noindent
where the expression of $f^{i}(x), i=1,2,\ldots,100$, is given by
\[f^{i}(x_1, x_2) :=\mqty(
    \sin(x_{1})+x^{2}_{1}(1+\cos(x_{2}))+ 2x_{1} \cos(x_{2})\cos\left(\tfrac{\pi (i-1)}{25}\right)\sin^{2}\left(\tfrac{ \pi(i-1)}{100}\right)\\
    \cos(x_{2})+x^{2}_{2}(2+\cos(x_{1})) + x_{1}\sin(x_{2})\sin\left(\tfrac{\pi (i-1)}{25}\right)\cos^{2}\left(\tfrac{ \pi(i-1)}{100}\right)
).\]

In this example, computer codes of all the five conjugate gradient methods are run for the same set of 100 initial points, which are arbitrarily chosen from the domain $[-\pi,~ \pi] \times [-\pi, ~\pi]$. A summary of the performance of the five methods is presented in Table \ref{Table_ex2}. We note that for this example, Algorithm \ref{algo} with PRP rule performs better than the other four methods.   \\

\begin{table}[!h]
\centering
\begin{tabular}{p{1.1cm} p{1.1cm} p{1cm} p{1.1cm} p{1.1cm} p{1.1cm} p{1.1cm}}
\hline

\hline 
\multicolumn{1}{c}{\text{Method}}& \multicolumn{3}{c}{\text{Iteration counts}} & \multicolumn{3}{c}{\text{Time}} \\
\cmidrule(rl){2-4} \cmidrule(rl){5-7}
\vspace{0.1cm}
& min & mean & max & min & mean & max\\
\hline

\hline

DY & 0 & 11.02 & 84  & 1.3270 & 50.4930 & 367.7961  \\
PRP & 0 & 5.52 & 40  & 1.3270 & 24.6789 & 157.8450\\
HS & 0 & 6.78 & 91  & 1.3270 & 36.9255 & 331.9024\\
FR & 0 & 11.98 & 195  & 1.3270 & 55.3178 & 1210.6040\\
CD & 0 & 7.94 & 62  & 1.3270 & 82.8473 & 335.9404\\

\hline 

\hline
\end{tabular}
    \caption{Performance of conjugate gradient methods on Example  \ref{Ex_2}}\label{Table_ex2}
\end{table}

In Figure \ref{Fig_ex2_D}, we exhibit the movement of the sequence of iterates $\{x_k\}$  generated by Algorithm \ref{algo} with PRP rule corresponding to the following four initial points ($x_0$): 
\begin{equation}\label{4_initial point}
    (-1.1812,   -2.5496)^{\top},~ (2.5971,    -2.1842)^{\top},~ (2.4561,  2.8858)^{\top} \text{ and }(2.7947,    -0.0574)^{\top}. 
\end{equation}
In all the four subfigures in Fig. \ref{Fig_ex2_D}, the initial point of the sequence $\{x_k\}$ is depicted by red color, the terminal by green, and the intermediate ones by blue. \\

\begin{figure}[h!]
\captionsetup[subfigure]{labelformat=empty}
\centering
\subfloat[]{
\includegraphics[width=0.8\textwidth]{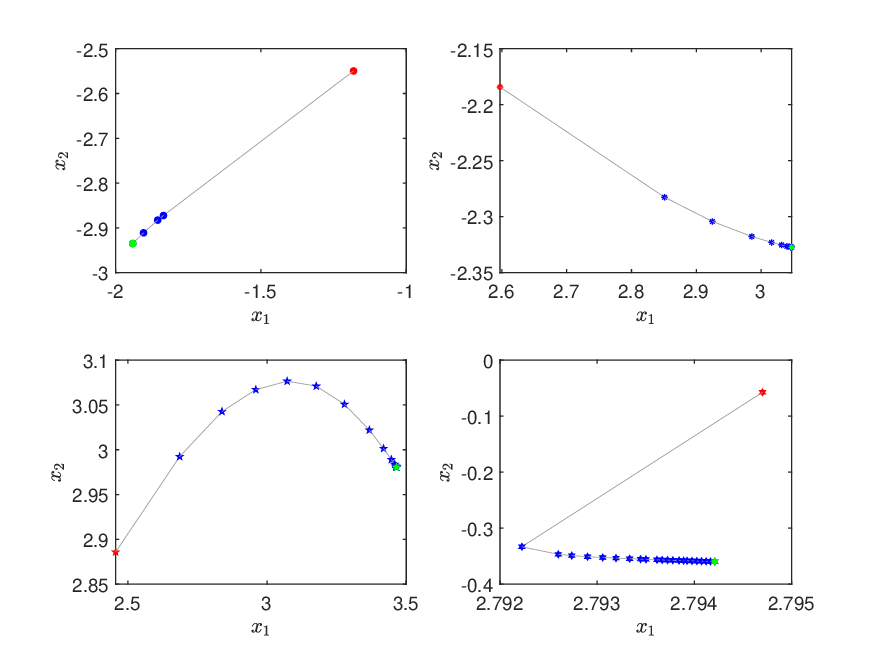}
}
\caption{Movement of the sequence $\{x_k\}$ generated by the PRP method for  Example \ref{Ex_2} corresponding to the four initial points given in \eqref{4_initial point}}\label{Fig_ex2_D}
\end{figure}

The movement of the sequence $\{F(x_k)\}$ (in the image space) corresponding to the four different sequences plotted in Fig. \ref{Fig_ex2_D} are depicted in the four different subfigures in Fig. \ref{Fig_ex2_o}. The top-left, top-right, bottom-left, and bottom-right subfigure in Fig. \ref{Fig_ex2_o} are corresponding to the top-left, top-right, bottom-left, and bottom-right subfigure in Fig. \ref{Fig_ex2_D}, respectively. \\

The red-colored bullet point in the top-left subfigure of Fig. \ref{Fig_ex2_D} is the initial point $x_0 = (-1.1812,   -2.5496)^{\top}$; the bunch of red-colored bullets in the top-left subfigure of Fig. \ref{Fig_ex2_o} is the set $F(x_0)$. The green-colored bullet in the top-left subfigure of Fig. \ref{Fig_ex2_D} is the terminal point ($\tilde{x}$ say) of the sequence $\{x_k\}$; the bunch of green-colored bullets in the top-left subfigure of Fig. \ref{Fig_ex2_o} is the set $F(\tilde{x})$. Similarly, the set $F(x_k)$ corresponding to the intermediate points in $\{x_k\}$ are also depicted. \\ 

In the same fashion as the correspondence of the top-left subfigures of Fig. \ref{Fig_ex2_D} and Fig. \ref{Fig_ex2_o}, the top-right, bottom-left and bottom-right subfigures in Fig. \ref{Fig_ex2_o} are generated. \\

\begin{figure}[h!]
\captionsetup[subfigure]{labelformat=empty}
\centering
\subfloat[]{
\includegraphics[width=0.8\textwidth]{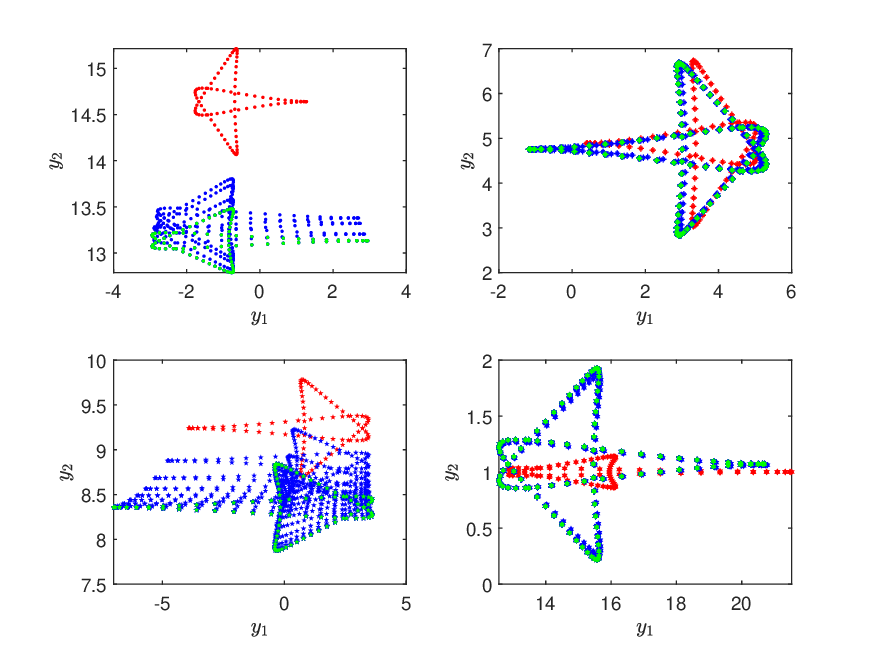}}
\vspace{-0.8cm}
\caption{The sequence $\{F(x_k)\}$ on the image space of $F$ in Example  \ref{Ex_2} corresponding to the sequence $\{x_k\}$ in Fig. \ref{Fig_ex2_D}}\label{Fig_ex2_o}
\end{figure}

\begin{example}\label{Ex_3}
\emph{We consider the function $F: \mathbb{R}^{3} \rightrightarrows \mathbb{R}^{3}$ defined as}
\end{example}
\[F(x):=\Bigl\{f^{1}(x), f^{2}(x), \ldots,f^{100}(x)\Bigr\},\]
\noindent 
where $f^{i}(x):=g(x)+h^{i}(x), i=1,2,\ldots,100$, 
\begin{align*}
   g(x_1, x_2) := \mqty(
    \tfrac{1}{2}(x_{1}-2)^{4} + \tfrac{1}{13} (x_{2}+ 1)^{2}+3\\ \tfrac{1}{36}(x_{1} + x_{2} -3)^2 + \tfrac{1}{18}(-x_{1} + x_{2} + 2)^2 -17\\\tfrac{1}{175}(x_{1} + 2 x_{2} -1)^2 + \tfrac{1}{17}(-x_{1} + 2 x_{2})^2) 
\end{align*}
     and
     \begin{align*}
        h^{i}(x_1, x_2) := \tfrac{1}{100}\mqty(
    \exp\left(\tfrac{x_{1}}{2}\right)\cos(x_{2})+ x_{1}\cos(x_{2})\sin^{3}\left(\tfrac{2 \pi(i-1)}{100}\right)-x_{2}\sin(x_{2})\cos \left(\tfrac{2 \pi(i-1)}{100}\right)\\
    \exp\left(\tfrac{x_{2}}{100}\right)\sin(x_{1})+ x_{1}\sin(x_{2})\sin^{3}\left(\tfrac{2 \pi(i-1)}{100}\right)+x_{2}\cos(x_{2})\cos\left(\tfrac{2 \pi(i-1)}{100}\right)\\
     \sin^{2}(x_{3})\sin^{3}\left(\tfrac{2\pi (i-1)}{100}\right)). 
     \end{align*}
Note that 
\[F(x)=g(x)+\Bigl\{h^{1}(x), h^{2}(x),\ldots,h^{100}(x)\Bigr\}.\]
Here, the function \(g\) is derived from the well-known MOP7 test problem \cite{huband2006review} for vector optimization. 
The computer codes of all five conjugate gradient methods were executed using the same set of 100 initial points, arbitrarily selected from the domain $[-500,~ 500] \times [-500,~ 500]$. A performance summary of these five methods is provided in Table \ref{Table_ex3}. Notably, for this example, Algorithm \ref{algo} implementing HS rule outperformed the other four methods. \\ 

In Fig. \ref{Fig_ex3}, we exhibit the sequence of set-valued iterates $\{F(x_k)\}$ corresponding to the sequence $\{x_k\}$, which is generated by Algorithm \ref{algo} with HS rule for the initial point $x_0 = (1.3908,~ 0.5130,~   64.9686)^{\top}$. The bunch of red-colored bullets, in Fig. \ref{Fig_ex3}, is the set $F(x_0)$, the blues ones are intermediate $F(x_k)$'s, and the green bunch represents the terminal $F(x_k)$. \\

\begin{table}[!h]
\centering
\begin{tabular}{p{1.1cm} p{1.1cm} p{1cm} p{1.1cm} p{1.1cm} p{1.1cm} p{1.1cm}}
\hline

\hline 
\multicolumn{1}{c}{\text{Method}}& \multicolumn{3}{c}{\text{Iteration counts}} & \multicolumn{3}{c}{\text{Time}} \\
\cmidrule(rl){2-4} \cmidrule(rl){5-7}
\vspace{0.1cm}
& min & mean & max & min & mean & max\\
\hline

\hline

DY & 3 & 42.5938 & 142  & 64.7456 & 218.7738 & 1012.4873  \\
PRP & 5 & 23.0729 & 266  & 51.1373 & 157.0143 & 1747.0088\\
HS & 3 & 17.5833 & 204  & 44.2141 & 138.9676 & 1270.7534\\
FR & 4 & 36.3500 & 194  & 54.8662 & 164.8179 & 828.6454\\
CD & 3 & 41.2641 & 430  & 67.6458 & 174.9024 & 1811.7712\\
\hline 

\hline

\end{tabular}
 \caption{Performance of conjugate gradient methods on Example  \ref{Ex_3}}\label{Table_ex3}
\end{table}

\vspace{-1.5cm}

\begin{figure}[h!]
\captionsetup[subfigure]{labelformat=empty}
\centering
\subfloat[]{
\includegraphics[width=0.6\textwidth]{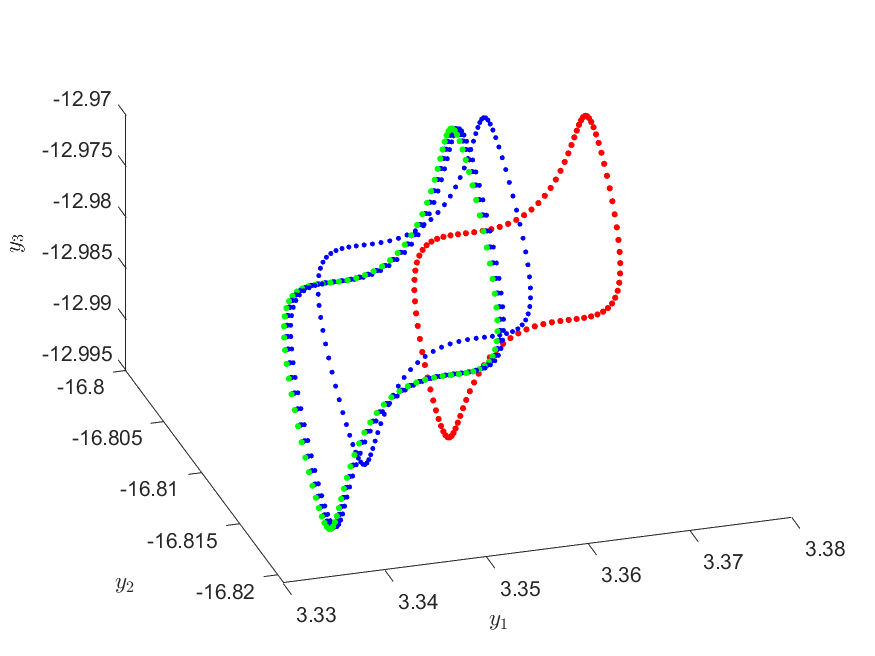}
}
\caption{Sequence generated by Algorithm \ref{algo} with HS rule, in the image space of Expample \ref{Ex_3} for the initial point $x_0 = (1.3908,~ 0.5130,~   64.9686)^{\top}$ } \label{Fig_ex3}
\end{figure}

In the next experiment, the set-valued mapping is taken from \cite[Test Instance 5.1]{bouza2021steepest}.\\

\begin{example}\label{Ex_4}
    \emph{
    Consider $F: \mathbb{R} \rightrightarrows \mathbb{R}^{2}$, defined as }
\begin{equation}\label{Ob_ex_4}
  F(x) := \left\{f^{1}(x), f^{2}(x), \ldots,f^{5}(x)\right\},  
\end{equation}
\end{example}
\noindent
where for each $i\in [5],~f^{i}:\mathbb{R}\rightarrow \mathbb{R}^2 $ is given by 
\[f^{i}(x) := \mqty(
    x \\
     \tfrac{x}{2}\sin(x))+\sin^{2}(x)\left[\tfrac{(i-1)}{4}  \mqty(
     1 \\
     -1
)+\left(1-\tfrac{(i-1)}{4}\right)\mqty(
    1\\
     -1
)\right]. \]
In this example, we consider two different set optimization problems \eqref{SPL} with the same objective function $F$ but with different ordering cones $K_1$ and $K_2$, where  
\begin{subequations}\label{cone_ex4}
\begin{align} 
    & K_{1} := \left\{(y_{1},y_{2})^{\top}~\middle|~y_{1}\geq 0,~ y_{2}\geq 0\right\}\label{cone_Ex_4_k1}\\
    \noalign{\noindent \text{or}}
     & K_{2} := \left\{(y_{1},y_{2})^{\top}~\middle|~-y_{1}+3y_{2}\geq 0,~ 3y_{1}-y_{2}\geq 0\right\}.\label{cone_Ex_4_k2}    
\end{align}  
\end{subequations}
\noindent 
For both problems, we run computer codes of all the five conjugate gradient methods for the same 100 initial points chosen arbitrarily from the interval \([-5\pi, 5\pi]\). A summary of the performance of the five methods is given in Table \ref{Table_ex4_k1} and Table \ref{Table_ex4_k2} corresponding to cones $K_{1}$ and $K_{2}$, respectively. As seen from Tables \ref{Table_ex4_k1} and \ref{Table_ex4_k2}, although the performance of all five methods is almost identical, the HS rule slightly outperforms the other four methods. \\

The sequence of sets $\{F(x_k)\}$ corresponding to the generated sequence $\{x_k\}$ by Algorithm \ref{algo} with the HS rule and the initial point $x_{0}=-10.4$ is depicted in Fig. \ref{Fig_ex4}. The five curves in both the subfigures of Fig. \ref{Fig_ex4} are graphs of the functions $f^i$, $i = 1, 2, \ldots, 5$. The collection of the red-colored bullet points is the set $F(x_0)$, and the collection of green-colored bullet points is the value of $F$ at the terminal iterate. \\

The collection of the green bullet points on the right subfigure of Fig. \ref{Fig_ex4} is the set $F(x_0)$, where $x_{0}=-10.4$. From this subfigure, we see that the green curve and the violet curve do not have a local portion in the sky-blue-shaded region. Thus, $x_{0}=-10.4$ is a local weakly minimal point of the considered set optimization problem with the ordering cone $K_2$. Hence, Algorithm \ref{algo} gets terminated at the initial point $x_0$.   \\

From the left subfigure of Fig. \ref{Fig_ex4}, we see that the bunch of red-colored bullet points is the set $F(x_0)$, which is not a local weakly minimal solution of the considered set optimization problem with the ordering cone $K_1$ since all the five graphs of the functions $f^1$, $f^2, \ldots, f^5$ have local portion inside the set $F(x_0) - K_1$. Thus, $x_{0}=-10.4$ is not a local weakly minimal point of the considered set optimization problem with the ordering cone $K_1$. Hence, for $K_1$, Algorithm \ref{algo} does not get terminated at the initial point $x_0$. In fact, for $K_1$, Algorithm \ref{algo} with HS rule gets terminated in the next iterate at which the objective function value is the set of all green-colored bullet points in the left subfigure of Fig. \ref{Fig_ex4}. \\

\begin{table}[!h]
\centering
\begin{tabular}{p{1.1cm} p{1.1cm} p{1cm} p{1.1cm} p{1.1cm} p{1.1cm} p{1.1cm}}
\hline

\hline 
\multicolumn{1}{c}{\text{Method}}& \multicolumn{3}{c}{\text{Iteration counts}} & \multicolumn{3}{c}{\text{Time}} \\
\cmidrule(rl){2-4} \cmidrule(rl){5-7}
\vspace{0.1cm}
& min & mean & max & min & mean & max\\
\hline

\hline
DY & 0 & 1.01 & 4  & 0.2614 & 2.6601 & 13.1601 \\
PRP & 0 & 1.04 & 4  & 0.2614 & 2.8637 & 13.7649\\
HS & 0 & 0.99 & 4  & 0.2614 & 2.5884 & 12.8843\\
FR & 0 & 1.02 & 5  & 0.2614 & 2.8565 & 17.1513\\
CD & 0 & 1.01 & 5  & 0.2614 & 2.7202 & 16.3896\\

\hline 

\hline
\end{tabular}
    \caption{Performance of conjugate gradient methods on Example \ref{Ex_4} associated with the cone $K_{1}$ given in \eqref{cone_Ex_4_k1}}\label{Table_ex4_k1}
\end{table}


\begin{table}[!h]
\centering
\begin{tabular}{p{1.1cm} p{1.1cm} p{1cm} p{1.1cm} p{1.1cm} p{1.1cm} p{1.1cm}}
\hline

\hline 
\multicolumn{1}{c}{\text{Method}}& \multicolumn{3}{c}{\text{Iteration counts}} & \multicolumn{3}{c}{\text{Time}} \\
\cmidrule(rl){2-4} \cmidrule(rl){5-7}
\vspace{0.1cm}
& min & mean & max & min & mean & max\\
\hline

\hline

DY & 0 & 0.04 & 1  & 0.2863 & 0.3188 & 5.0179  \\
PRP & 0 & 0.04 & 1  & 0.2863 & 0.3188 & 5.0179\\
HS & 0 & 0.04 & 1  & 0.2863 & 0.3188 & 5.0179\\
FR & 0 & 0.04 & 1  & 0.2863 & 0.3188 & 5.0179\\
CD & 0 & 0.04 & 1  & 0.2863 & 0.3188 & 5.0179\\

\hline 

\hline
\end{tabular}
    \caption{Performance of conjugate gradient methods on Example \ref{Ex_4} associated with the cone $K_{2}$ given in \eqref{cone_Ex_4_k2}}\label{Table_ex4_k2}
\end{table}

\begin{figure}[h!]
\captionsetup[subfigure]{labelformat=empty}
\centering
\subfloat[]{
\includegraphics[width=0.49\textwidth]{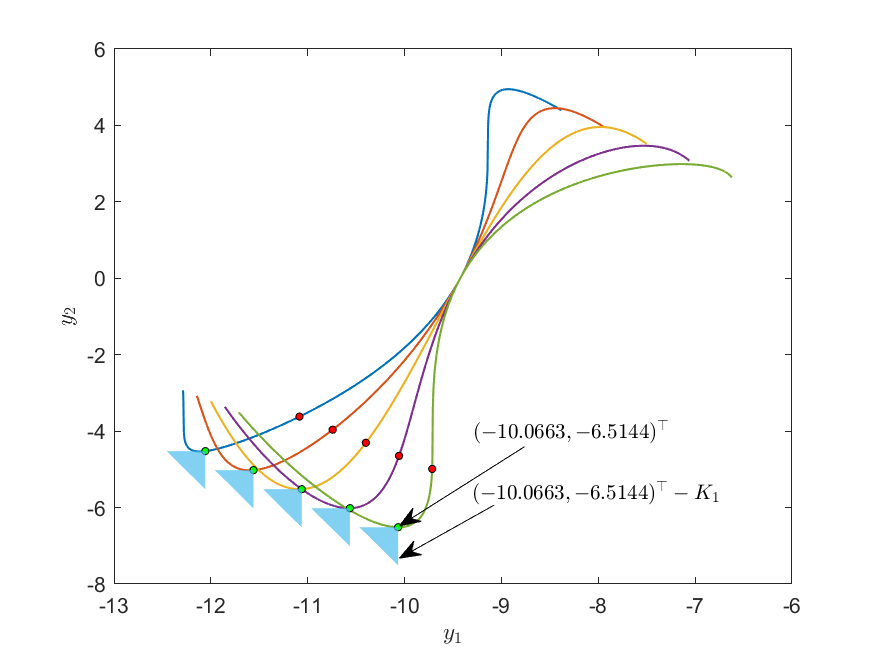}
\includegraphics[width=0.49\textwidth]{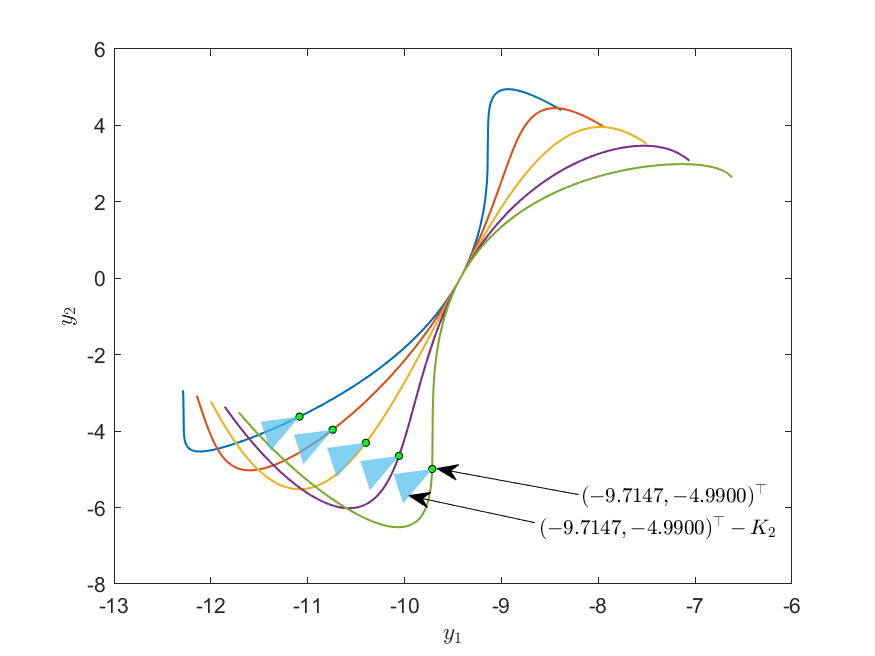}
}
\vspace{-0.6cm}
\caption{Sequence generated by Algorithm \ref{algo} with HS rule, in the image space of Example \ref{Ex_4}, for the initial point $x_0 = -10.4$ corresponding to the cones $K_{1}$ and $K_{2}$ as given in \eqref{cone_ex4}}\label{Fig_ex4}
\end{figure}

Next, we consider an example where the image space of the objective function is $\mathbb{R}^3$, but the ordering cone $K$ is not the usual $\mathbb{R}^{3+}$. \\

\begin{example}\label{Ex_5}
\emph{Similar to Example \ref{Ex_4}, consider two set-valued optimization problems with the identical objective function $F: \mathbb{R} \rightrightarrows \mathbb{R}^{3}$ but with two different cones:}    
\begin{subequations}\label{Cone_ex5}
    \begin{align}
    & K_{1} : = \left\{(y_{1},y_{2},y_{3})^{\top} ~\middle|~ y_{1}\geq 0,~ y_{2}\geq 0, y_{3}\geq 0\right\} \label{Cone_ex5_k1} \\
    \noalign{\noindent \text{\emph{and}}}
     & K_{2} : = \left\{(y_{1},y_{2},y_{3})^{\top} ~\middle|~ y_{3}\geq \sqrt{y^{2}_{1}+y^{2}_{2}} \right\},  \label{Cone_ex5_k2}
\end{align}
\end{subequations}   
\end{example}
\noindent
where 
\begin{equation}\label{Obj_ex5}
F(x):=\Bigl\{f^{1}(x), f^{2}(x), \ldots,f^{5}(x)\Bigr\},   
\end{equation}
\noindent 
with  
\[f^{i}(x) := \mqty(
     \tfrac{x}{2}\sin(x) + \cos^{2}(x)\tfrac{(i-3)}{2}\\
     \tfrac{1}{2}\cos(2x) + \sin^{2}(x)\tfrac{(-i+3)}{4}\\
     x\sin(2x) + \sin^{2}(x)\tfrac{(-i+3)}{2} \\
), i = 1, 2, \ldots, 5. \]
%
%
%

\smallskip 
\smallskip
\noindent
Note that the cone $K_1$ is finitely generated, but $K_2$ is not finitely generated.  \\

For the problem with $K_1$, we run the computer code of all the five conjugate gradient methods for the same 100 initial points that are arbitrarily picked from the interval \([-15.5, -8]\). A summary of the performance of the five methods is presented in Table \ref{Table_ex5_k1}. Notice from Table \ref{Table_ex5_k1} that although all the methods perform equivalently, the DY rule has just a slightly better performance than the other four methods. \\

For the problem with $K_2$, we run the computer code of all the proposed three conjugate gradient methods for the same, arbitrarily picked, 100 initial points from \([-15.5, -8]\). A summary of the performance is provided in Table \ref{Table_ex5_k2}. Notice from Table \ref{Table_ex5_k2} that the DY rule has better performance than the other two methods. As the approaches described in \cite{kumar2024nonlinear} are not applicable for nonfinitely generated cones, for the problem with $K_2$, we do not compare the proposed three conjugate gradient rules with the FR and CD methods in \cite{kumar2024nonlinear}. \\

The sequence of sets $\{F(x_k)\}$ corresponding to the sequence $\{x_k\}$ generated by Algorithm \ref{algo} with DY rule for the initial point $x_{0}= -10.9$ are exhibited in Fig. \ref{fig_ex5}. The left subfigure corresponds to $K_1$, and the right corresponds to $K_2$. The curves in both the subfigures are the graphs of the functions $f^1$, $f^2, \ldots, f^5$. The bunch of red-colored bullet points is the set $F(x_0)$, and the green bunch is the value of the function $F$ at the terminal iterate. Blue-colored points are used to depict the value of $F$ at the intermediate iterates. Note that for the same initial point $x_{0}$, the solutions generated by the same method are different for different ordering cones $K_{1}$ and $K_{2}$. \\

\vspace{-0.6cm}
  
\begin{table}[!h]
\centering
\begin{tabular}{p{1.1cm} p{1.1cm} p{1cm} p{1.1cm} p{1.1cm} p{1.1cm} p{1.1cm}}
\hline

\hline 
\multicolumn{1}{c}{\text{Method}}& \multicolumn{3}{c}{\text{Iteration counts}} & \multicolumn{3}{c}{\text{Time}} \\
\cmidrule(rl){2-4} \cmidrule(rl){5-7}
\vspace{0.1cm}
& min & mean & max & min & mean & max\\
\hline

\hline

DY & 0 & 0.64 & 5  & 0.1790 & 1.9988  & 14.9486  \\
PRP & 0 & 0.66 & 5  & 0.1790 & 2.1551 & 15.3702\\
HS & 0 & 0.65 & 5  & 0.1790 & 2.0057 & 15.0421\\
FR & 0 & 0.65 & 5  & 0.1790 & 2.0149 & 17.7405\\
CD & 0 & 0.64 & 5  & 0.1790 & 2.0013 & 16.8554\\

\hline 

\hline
\end{tabular}
    \caption{Performance of conjugate gradient methods on Example \ref{Ex_5} associated with the cone $K_{1}$ given in \eqref{Cone_ex5_k1}}\label{Table_ex5_k1}
\end{table}

\vspace{-0.6cm}

\begin{table}[!h]
\centering
\begin{tabular}{p{1.1cm} p{1.1cm} p{1cm} p{1.1cm} p{1.1cm} p{1.1cm} p{1.1cm}}
\hline

\hline 
\multicolumn{1}{c}{\text{Method}}& \multicolumn{3}{c}{\text{Iteration counts}} & \multicolumn{3}{c}{\text{Time}} \\
\cmidrule(rl){2-4} \cmidrule(rl){5-7}
\vspace{0.1cm}
& min & mean & max & min & mean & max\\
\hline

\hline

DY & 0 & 0.15 & 3  & 0.1225 & 0.5538 & 1.7311  \\
PRP & 0 & 0.15 & 3  & 0.1225 & 0.5575 & 1.6302\\
HS & 0 & 0.15 & 3  & 0.1225 & 0.5600& 1.6680\\

\hline 

\hline
\end{tabular}
    \caption{Performance of the proposed conjugate gradient methods on Example \ref{Ex_5} associated with the cone $K_{2}$ given in \eqref{Cone_ex5_k2}}\label{Table_ex5_k2}
\end{table}

\vspace{-0.7cm}

\begin{figure}[h!]
\captionsetup[subfigure]{labelformat=empty}
\centering
\subfloat[]{
\includegraphics[width=0.49\textwidth]{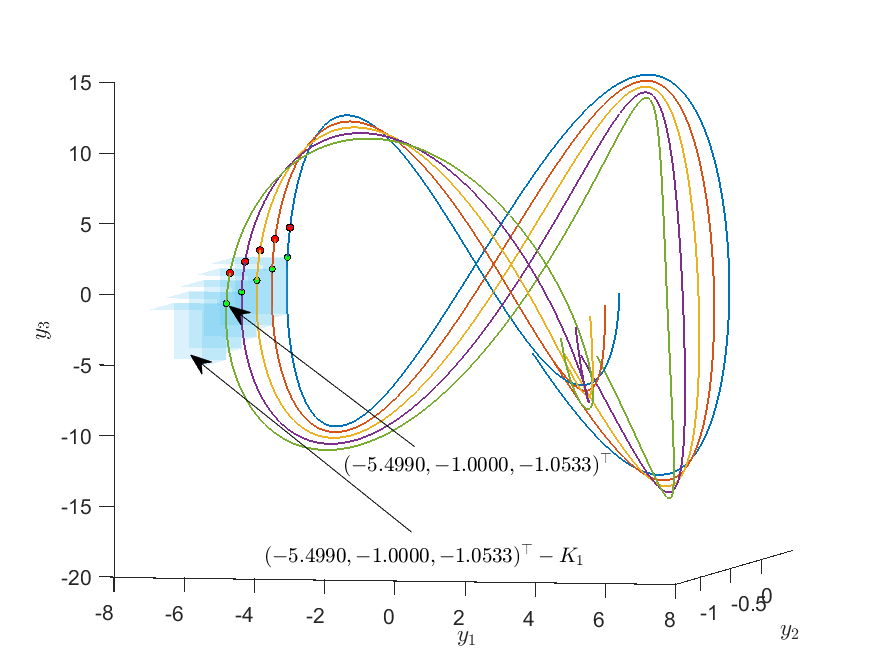}
\includegraphics[width=0.49\textwidth]{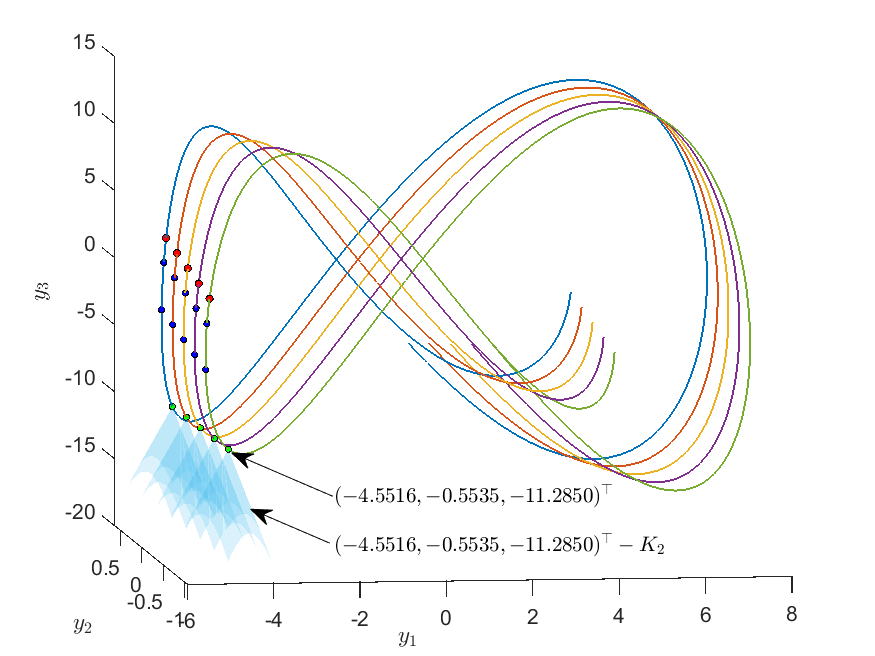}
}
\vspace{-0.6cm}
\caption{Sequence generated by Algorithm \ref{algo} with DY rule, in the image space of Example \ref{Ex_5}, for the initial point $x_0 = -10.9$ corresponding to $K_{1}$ and $K_{2}$ given in \eqref{Cone_ex5}} \label{fig_ex5}
\end{figure}





\section{Conclusion}\label{conclusion} 
In this work, nonlinear conjugate gradient methods have been proposed to identify local weakly minimal points of the set optimization problem \eqref{SPL} under the lower set less preordering relation.  We have introduced the concept of $K$-descent direction and a sufficient descent condition for set-valued functions with the help of an auxiliary real-valued function. With the help of this auxiliary function, we have introduced the standard and strong Wolfe line searches for set-valued functions. Following this, we have established a result (Theorem \ref{exi.step} and Remark \ref{existence_of_std_wolfe}) on the existence of a step size for which the standard Wolfe or strong Wolfe conditions are satisfied. Then, we reported (in Theorem \ref{the. zoun.}) that for a given sequence $\{d_k\}$ of $K$-descent directions of a set-valued function, any sequence of iterates $\{x_{k}\}$ defined by $x_{k + 1} = x_k + \alpha_k d_k$ satisfies a Zoutendijk-like condition if the step size sequence  $\{\alpha_k\}$ satisfies the proposed standard Wolfe condition. Importantly, this step size existence result is derived without assuming that the ordering cone $K$ is finitely generated. Thereby, all the derived results in this study are applicable even for nonfinitely generated ordering cone $K$. \\ 

Thereafter, we have proposed a general scheme (Algorithm \ref{algo}) for nonlinear conjugate gradient methods for the set optimization problem. Then, we have reported (Proposition \ref{Pro_Bk_des_suf_des}) some choices of $\beta_k$-value so that the direction $d_k$ given in the general conjugate gradient Algorithm \ref{algo} is $K$-descent and sufficient descent. Subsequently, without explicitly restricting the parameter $\beta_{k}$, we have proved the global convergence (Theorem \ref{Conv_Algo}) of the proposed algorithm. \\

Next, we have proposed the DY, PRP, and HS rules to choose the $\beta_k$ value for set optimization problems. Then, in Theorem \ref{Con_DY} and Theorem \ref{theo req of prp con}, we have established the convergence of the proposed Algorithm \ref{algo} with DY, PRP, and HS rules. Finally, we have conducted numerical experiments to demonstrate the performance of the proposed methods. A comparison of the proposed DY, PRP and HS methods with the existing FR and CD methods for set optimization is also provided. \\ 

In the future, one may try to address the following issues. \\ 

\begin{itemize}
\item As mentioned in Remark \ref{Remark on Algo.}, a regular restart condition is used in Step 4 to make $d_{k}$ as a $K$-descent direction of $F$ at $x_{k}$. In the future, we may work on a new line search technique so that no regular restart condition is required.\\


\item During the evaluation of the set $M_k:=\mathrm{Min}(F(x_k),K)$, in Step 1 of Algorithm \ref{algo}, we have used the crude way of pair-wise comparing the elements in $F(x_k)$. In the future, we may devise and employ an efficient way to find $M_k$. This may improve the efficiency of Algorithm \ref{algo}. \\


\item Applying the proposed methods, future research may aim to find techniques to solve uncertain optimization problems with finite uncertainty. \\

\item No result on the rate of convergence of the proposed algorithm is derived in this study. Future studies can be done in this direction. \\

\item As the Gerstewitz scalarizing function requires a prespecified $e \in \text{int}(K)$, which may not be easy to choose, analysis of the proposed results with respect to other scalarizing functions can be done in the future. Also, in this paper, we have used the lower set less ordering of sets to derive results. Future work can use other preordering relations of sets to see if the derived results hold. \\ 

\end{itemize}

\subsection*{Acknowledgement}
Debdas Ghosh acknowledges the financial support of the research grants MATRICS
(MTR/2021/000696) and Core Research Grant (CRG/2022/001347) by the Science and Engineering Research Board, India. Ravi Raushan thankfully acknowledges financial support from CSIR, India, through a research fellowship (File No. 09/1217(13822)/2022-EMR-I) to carry out this research work.

\subsection*{Data availability}
There is no data associated with this paper.




\end{document}